\newtheorem{theorem}{\textbf{Theorem}}[section]
\newtheorem{proposition}[theorem]{\textbf{Proposition}}
\newtheorem{lemma}[theorem]{\textbf{Lemma}}
\newtheorem{corollary}[theorem]{\textbf{Corollary}}
\theoremstyle{definition}
\newtheorem{definition}[theorem]{\textbf{Definition}}
\theoremstyle{remark}
\newtheorem{remark}[theorem]{Remark}
\numberwithin{equation}{section}
\def\i\emph{em}
\def\forall{\hbox{for all}~}
\def\v{\vskip 1em}
\def\bega{\begin{array}}\def\enda{\end{array}}
\def\begi{\begin{itemize}}\def\endi{\end{itemize}}
\def\bel{\begin{equation}\label}\def\eeq{\end{equation}}
\def\sqr#1#2{\vbox{\hrule height .#2pt\hbox{\vrule width .#2pt height #1pt \kern #1pt\vrule width .#2pt}\hrule height .#2pt }}
\title{A Debt Management Problem with Currency Devaluation}
\author{Antonio Marigonda}
\address{\hspace{-0.5em}\begin{tabular}{ll}Antonio Marigonda:&Department of Computer Science,\\& University of Verona\\ &Strada Le Grazie 15, I-37134 Verona, Italy.\end{tabular}}
\email{antonio.marigonda@univr.it}
\author{Khai T. Nguyen}
\address{\hspace{-0.5em}\begin{tabular}{ll}Khai T. Nguyen:&Department of Mathematics,\\& North Carolina State University\\ &2108 SAS Hall Box 8205, Raleigh, North Carolina 27695, USA.\end{tabular}}
\email{khai@math.ncsu.edu}
\begin{document}

\begin{abstract}
We consider a model of debt management, where a sovereign state trade some bonds to service the debt with a pool of risk-neutral competitive foreign investors. 
At each time, the government decides which fraction of the gross domestic product (GDP) must be used to repay the debt, and how much to devaluate its currency. 
Both these operations have the effect to reduce the actual size of the debt, but have a social cost in terms of welfare sustainability. 
Moreover, at any time the sovereign state can declare bankruptcy by paying a correspondent bankruptcy cost. 
We show that this optimization problems admits an equilibrium solution, leading to bankruptcy or to a stationary state, depending on the initial conditions.
\end{abstract}

\maketitle

\section{Introduction}
According to US Senate Levin-Coburn Report \cite{LCR}, the financial crisis of 2007-2008, which originated the worldwide Great Recession of 2008--2012 
and to the European sovereign debt crisis of 2010-2012, and whose effects are still present in many countries,  
``was not a natural disaster, but the result of high risk, complex financial products, undisclosed conflicts of interest; 
and the failure of regulators, the credit rating agencies, and the market itself to rein in the excesses of Wall Street.'' 
The first part of the report analyze some topic cases of 
\begin{itemize}
\item [(1)] High Risk Lending;
\item [(2)] Regulatory Failure;
\item [(3)] Inflated Credit Ratings;
\item [(4)] Investment Bank Abuses.
\end{itemize}
In the final recommendation of the report, a whole section is devoted to the management of high risk lending, in order to prevent abuses.\par\medskip\par
In the Eurozone, the crisis - whose consequences lasted until 2016 - took the form of a speculative attack to the sovereign debt of some EU countries (Portugal, Ireland, Greece, Spain), 
but also strongly affects also two major economic powers like Italy and France. The undertaken actions of the EU governments to face the crisis had very high social costs, 
leading also to an heavy political impact. These considerations lead to the following natural problems:
\begin{itemize}
\item to identify suitable tools to estimate the risk of a borrower's bankruptcy (as in the subprime mortgage crisis, which originated);
\item to have quantitative tools, relying on reliable prediction of realistic models, which would allow the regulation authority to prevent abuses;
\item to provide optimal strategies in the management of sovreign debts.
\end{itemize}
In \cite{NT}, the authors introduced a variational model where a government issues nominal defaultable
debt and chooses fiscal and monetary policy under discretion. In particular, to reduce the
actual size of the debt, the government can choose to devaluate its currency, producing inflaction 
and thus increasing the welfare cost and negatively affecting the trust of the investors, or to rely only on fiscal policy to serve the debt.
The government can also declare the default, which imply to pay a bankruptcy cost due to the temporary exclusion
from capital markets, and a drop in the output endowment. 
The aim is to find a strategy minimizing a cost functional dealing with the trade-off between inflaction,
social costs, and debt sustainability and possibly declaring the default if this option would me preferable to
continue servicing the debt.\par\medskip\par
The analysis of the model in \cite{NT} was performed by a numerical methods, and as a final conclusion of their analysis, the authors 
claim that the tool of currency devaluation, though useful in a short-term perspective, is not recommended unless the government
is able to make credible commitments about their future inflaction policy.
In this sense, it is worth of notice that many countries with limited inflaction credibility, decide either to issue bonds \emph{directly} in a foreign
stable currency (e.g., US dollars), or delegates the monetary policy to an independent authority with a strong anti-inflaction commitment (e.g., Eurozone Central Bank).\par\medskip\par
An analytical study of a variant of the model in \cite{NT} was performed in \cite{BMNP}, in the case where no currency devaluation is available to the governement,
and provided a semi-explicit formula for the optimal strategy in the deterministic case (i.e., when the GDP evolves deterministically).
\par\medskip\par
This paper extend the analytical study of \cite{BMNP}, allowing also the possibility of currency devaluation as in \cite{NT}.
Indeed, from the point of view of the model, one of the most relevant resul is if the initial debt-to-income is sufficiently high, \emph{every} optimal debt-management 
strategy needed to employ the currency devaluation.\par\medskip\par 
As a real example of such policy, we mention that, according to the World Economic Outlook (April 2019) of International Monetary Fund, Japan
leads the ranking of countries with higher debt-to-income ratio with a 237.5\%, and it devaluated its currency of about 40\% vs US dollars in the period 2012-2016 (USDJPY index).
\par\medskip\par
The paper is structured as follows: in Section \ref{sec:S-model} we introduce the stochastic model together with the main assumptions, in Section \ref{sec:stoch-analysis} we prove
the existence of an equilibrium solution for the stochastic model as the steady state of an auxiliary parabolic system, 
and study its asymptotic behaviour as the maximum debt-to-income threshold is pushed to $+\infty$.
In Section \ref{sec:det-model} we study the deterministic model obtaining by setting the volatility $\sigma=0$. In this case we provide a semi-explicit construction for an
equilibrium solution, together with a study of its asymptotic behaviour as the maximum debt-to-income threshold is pushed to $+\infty$.

\section{A model with stochastic growth}\label{sec:S-model}

In this section, we develop the model in \cite{BMNP}, allowing the possibility of currency devaluation as in \cite{NT}. 
Here the borrower is a sovereign state, that can decide to devaluate its currency (for example, printing more paper money). 
The total income $Y$, i.e., the gross national product GDP measured in terms of the floating currency unit, 
can quickly increase if the currency is devaluated, producing inflaction. 
It is governed by a stochastic process
\begin{equation*}
dY(t)=(\mu +\tilde v(t)) Y(t)\,dt + \sigma Y(t)\,dW(t),
\end{equation*}
where $W(t)$ is a Brownian motion on a filtered probability space and 
\begin{itemize}
\item $\mu$ is the average growth rate of the economy;

\item $\sigma$ is the volatility;

\item $\tilde v(t)\geq 0$ is the devaluation rate at time $t$, regarded as an additional control.
\end{itemize}
We refer to \cite{NT} for a more detailed derivation $v$ in the above system from economic primitives.
\par\medskip\par
Let $X(t)$ be the outstanding stock of nominal government bonds, expressed in the local currency unit. 
In particular, $X(t)$ represents also the total nominal value of the outstanding debt.
To service the debt, the government trades a nominal non-contingent bond with risk-neutral competitive foreign investors.
In case of bankruptcy, the lenders recover only a fraction $\theta\in[0,1]$ of their outstanding capital, depending on the total amount of the debt at the time of bankruptcy. 
To offset this possible loss, the investors buy a bond with unit nominal value at a discounted price $\tilde p(t)\in [0,1]$.
We denote by $U(t)$ the rate of payments that the borrower chooses to make to the lenders at time $t$. 
If this amount is not enough to cover the running interest and pay back part of the principal, new bonds are issued, at the discounted price $p(t)$. 
As in \cite{BMNP}, the nominal value of the outstanding debt thus evolves according to
\[\dot X(t)=-\lambda X(t) + \dfrac{(\lambda + r) X(t) - U(t)}{\tilde p(t)}.\]
Here the constants 
\begin{itemize}
\item $\lambda$ represents the rate at which the borrower pays back the principal;
\item $r$ represents the discount rate.
\end{itemize}
\par\medskip\par
The debt-to-GDP ratio (DTI) is defined as $x(\cdot) = X(\cdot)/Y(\cdot)$ . 
By It\={o}'s formula \cites{O, Shreve}, the evolution of $x(\cdot)$ is 
\begin{equation}\label{eq:debt-GDP-ev}
dx(t)=\left[\left(\dfrac{\lambda+r}{\tilde p(t)}-\lambda+\sigma^2-\mu-\tilde v(t)\right)x(t)-\dfrac{\tilde u(t)}{\tilde p(t)}\right]\,dt-\sigma\,x(t)\,dW(t),
\end{equation}
where $\tilde u = U/Y\in [0,1]$ is the fraction of the total income allocated to reduce the debt. Throughout the following we will assume that $r>\mu$.
\par\medskip\par
In this model, the borrower has three controls: at each time $t$ he can decide the portion $\tilde u(t)\in [0,1]$  of the total income is allocated to repaying the debt, 
he can decide the devaluation rate $\tilde v(t)\in [0,+\infty[$ and he can also decide the time $T_b(x')$ he is going to declare bankruptcy, in correspondence to a DTI value $x'$, 
paying a bankruptcy cost. Furthermore, we assume that 
\begin{itemize}
\item there exists a threshold $x^*>0$ such that if $x(t)$ reaches $x^*$ then the borrower is forced to declare bankruptcy;
\item the borrower decides to declare bankruptcy as soon as $x(t)$ reaches $x'$, where $x'\in [0,x^*]$ is an additional control parameter, 
chosen by the borrower in order to minimize his expected cost. 
\end{itemize}
We are going to consider \emph{control strategy in feedback form}, i.e., $\tilde u(t)=u(x(t))$, $\tilde v(t)=v(x(t))$, for certain measurable maps $u:[0,x^*]\to [0,1]$ and
$v:[0,x^*]\to [0,+\infty[$.
The total expected cost to the borrower, exponentially discounted in time, for having implemented the control strategy $(u(\cdot),v(\cdot),x')$ is then  
given by
\[J[x_0, x',u(\cdot),v(\cdot)]=E\left[\int_0^{T_b(x')} e^{-rt}\Big[L(u(x(t)))+ c(v(x(t)))\Big]\, dt + e^{-r T_b(x')} B\right]_{x(0)=x_0},\]
where 
\begin{itemize}
\item $T_b(x')$ is the random variable bankruptcy time defined by
\[T_b(x')\doteq\inf\{t>0:\,x(t)= x'\},\]
where $x(\cdot)$ solves \eqref{eq:debt-GDP-ev} with $\tilde u(\cdot)=u(x(\cdot))$, $\tilde v(\cdot)=v(x(\cdot))$ and $x(0)=x_0$.\par\noindent 
If \eqref{eq:debt-GDP-ev} has no solution, we set $J[x_0, x',u(\cdot),v(\cdot)]=+\infty$.
We define also
\begin{equation}\label{eq:bankruptcy-time}T^*_b\doteq \inf\{ t>0:\,x(t)= x^*\}.\end{equation}
\item $B$ is the bankruptcy cost, which summarizes the penalties of temporary exclusion from the capital markets, the bad reputation among the investors, 
and the social costs of the default; 
\item $c(v)\ge 0$ is a social cost resulting by devaluation, i.e., the increasing cost of the welfare and of the imported goods;
\item $L(u)\ge 0$ is the cost for the borrower to implement the control strategy $u(\cdot)$, i.e., adversion toward austerity policies and welfare's budget cuts. 
\end{itemize}

Throughout the paper we will assume the following structural conditions on the functions $L$, $c$. 
More precisely, there exist $\delta_0>0$ and $v_{\max}\in ]0,+\infty]$ such that 
\begin{enumerate}
\item[\textbf{(A1)}] \emph{the implementing cost function $L:[0,1[\to\mathbb R$ is continuous on $[0,1[$, twice continuously differentiable for $u\in ]0,1[$, and satisfies}
\[L(0)=0,\quad  L'(u)>0, \quad L''(u)>\delta_0>0\textrm{ for $u\in]0,1[$, and }\lim_{u\to 1^-} L(u)=+\infty.\]
\item[\textbf{(A2)}] \emph{the social cost $c:[0,v_{\max}[\to\mathbb R$, determined by currency devaluation, is continuous on $[0,v_{\max}[$, 
twice continuously differentiable for $v\in ]0,+\infty[$ and satisfies}
\[c(0)=0,\quad c'(v)>0, \quad c''(v)>\delta_0>0\textrm{ for $v\in]0,v_{\max}[$, and }\lim_{v\to v_{\max}^-} c(v)=+\infty.\]
\end{enumerate}
We extend the definition of the function $L,c$ to lower semicontinuous function defined on the whole of $\mathbb R$, keeping the same names, by setting $L(u)=+\infty$ 
for $u\notin [0,1[$ and $c(v)=+\infty$ for $v\notin [0,v_{\max}[$.
With a slight abuse of notation, we will write $L'(0)$, and $c'(0)$ to denote $\displaystyle\lim_{u\to 0^+}L'(u)$ and $\displaystyle\lim_{v\to 0^+}c'(v)$,
respectively. 

\par\medskip\par

By a Dynamic Programming argument, it is never convenient for the borrower to declare bankruptcy unless he is not forced to do so, i.e.,
unless the threshold $x^*$ is reached.

\begin{lemma}\label{lemma:bestbank} 
If \textbf{(A1)-(A2)} hold then for any admissible control strategy $(u(\cdot),v(\cdot),x')$, 
there exists a control strategy $(\hat u(\cdot),\hat v(\cdot),x^*)$ with smaller cost.
\end{lemma}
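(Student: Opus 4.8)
The plan rests on the principle that voluntarily declaring bankruptcy at a level $x'<x^*$ is never worthwhile, because from $x'$ one can always do at least as well as paying the lump sum $B$ — indeed, strictly better. The key quantitative remark is that the \emph{null control} already achieves this: by \textbf{(A1)}--\textbf{(A2)} we have $L(0)=c(0)=0$, so the strategy $(\tilde u,\tilde v)\equiv(0,0)$ carries no running cost, while along \eqref{eq:debt-GDP-ev} with $\tilde u=\tilde v\equiv 0$ the drift of $x$ equals $\bigl(\tfrac{\lambda+r}{\tilde p}-\lambda+\sigma^2-\mu\bigr)x\ge (r-\mu)\,x>0$ (using $\tilde p\le 1$ and $r>\mu$), so $\log x(\cdot)$ has drift bounded below by $r-\mu+\sigma^2/2>0$; consequently the corresponding stopping time $T^*_b$ from \eqref{eq:bankruptcy-time} is almost surely finite and strictly positive. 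Writing $\psi(y)\doteq E_y\bigl[e^{-rT^*_b}\bigr]\,B$ for the null-control cost, we get $0\le\psi<B$ on $[0,x^*[$ and $\psi(x^*)=B$, and, since the null-control equation has a monotone flow by the comparison theorem for one-dimensional stochastic differential equations, $\psi$ is nondecreasing. Denoting by $V$ the value function of the auxiliary control problem with \emph{forced} threshold $x^*$ (no voluntary bankruptcy allowed), admissibility of the null control gives $V\le\psi$, hence $V\le\psi\le\psi(x^*)=B$ on all of $[0,x^*]$.

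Next I would prove the comparison $V(x_0)\le J[x_0,x',u(\cdot),v(\cdot)]$ for every admissible $(u(\cdot),v(\cdot),x')$ and every $x_0\in[x',x^*]$. Consider the policy that follows $(u,v)$ up to the time $\rho\doteq T_b(x')\wedge T^*_b$ and, on the event $\{\rho=T_b(x')\}$, switches at $\rho$ to an $\varepsilon$-optimal feedback for the forced-$x^*$ problem started from $x'$; this is an admissible policy for the forced-$x^*$ problem, so its cost is at least $V(x_0)$. By the strong Markov property that cost equals
\[
E\Bigl[\int_0^{\rho}e^{-rt}\bigl(L(u(x(t)))+c(v(x(t)))\bigr)\,dt+\mathbf 1_{\{\rho=T^*_b\}}\,e^{-rT^*_b}B+\mathbf 1_{\{\rho=T_b(x')\}}\,e^{-rT_b(x')}\bigl(V(x')+\varepsilon\bigr)\Bigr],
\]
and since $V(x')\le\psi(x')<B$ this is at most $J[x_0,x',u(\cdot),v(\cdot)]+\varepsilon$; letting $\varepsilon\to 0$ yields $V(x_0)\le J[x_0,x',u(\cdot),v(\cdot)]$. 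Moreover the inequality is strict whenever $x_0\in[x',x^*[$, because then the trajectory hits $x'$ strictly before $x^*$ with positive probability while $B-V(x')>0$. Finally, picking an $\varepsilon$-optimal feedback $(\hat u(\cdot),\hat v(\cdot))$ for the forced-$x^*$ problem from $x_0$ produces a strategy $(\hat u(\cdot),\hat v(\cdot),x^*)$ with $J[x_0,x^*,\hat u(\cdot),\hat v(\cdot)]\le V(x_0)+\varepsilon$, which for $\varepsilon$ small enough is $<J[x_0,x',u(\cdot),v(\cdot)]$; the borderline cases $x_0=x^*$ and $x'=x^*$ are trivial.

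The main obstacle is the dynamic-programming input used above for the auxiliary forced-$x^*$ problem: the existence of $\varepsilon$-optimal feedback controls, the identification of the path-dependent value with the feedback value, and enough regularity of $V$ (and of $\psi$ near the degenerate boundary $x=0$) to legitimize these manipulations. All of this is standard for controlled one-dimensional diffusions once the associated Hamiltonian is well defined, which is ensured by the uniform strong convexity of $L$ and $c$ in \textbf{(A1)}--\textbf{(A2)}. A more hands-on alternative that sidesteps part of this machinery is to keep $(u,v)$ on $[x',x^*]$, set the controls to $0$ on $[0,x'[$, and reduce by the strong Markov property to the single inequality $J[x',x^*,\hat u(\cdot),\hat v(\cdot)]\le B$; however this still forces one to control the running cost accumulated on the excursions of $x(\cdot)$ above $x'$, which is essentially the same difficulty in disguise.
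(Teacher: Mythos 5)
Your argument is logically sound in outline, but it takes a genuinely different and much heavier route than the paper, and you seem to have overlooked the elementary simplification that makes the lemma almost immediate. The paper's proof is a short, purely pathwise computation: keep $(u,v)$ on $[0,T_b(x')]$ and switch to the null controls $(\hat u,\hat v)\equiv(0,0)$ on $[T_b(x'),T^*_b]$. Since $L(0)=c(0)=0$, the running cost accumulated after $T_b(x')$ is \emph{identically zero} by construction — there is no excursion cost to control, so the difficulty you raise at the end ("this still forces one to control the running cost accumulated on the excursions of $x(\cdot)$ above $x'$") does not actually arise; your description of that alternative also has the switching region reversed (null controls should be used on the region $]x',x^*]$, i.e.\ after $T_b(x')$, not on $[0,x'[$). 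The only quantity that changes between $J[x_0,x',u,v]$ and $J[x_0,x^*,\hat u,\hat v]$ is the discount factor multiplying the lump sum $B$, and since $T^*_b\ge T_b(x')$ pathwise and $r>0$, one has $e^{-rT^*_b}B\le e^{-rT_b(x')}B$, which already gives $J[x_0,x^*,\hat u,\hat v]\le J[x_0,x',u,v]$. No finiteness of $T^*_b$ is needed (if $T^*_b=+\infty$ that term is simply $0$), no null-control cost $\psi$, no monotone-flow comparison, no strong Markov property, no $\varepsilon$-optimal feedbacks, and no regularity of the forced-threshold value function $V$. Your detour does buy a stronger conclusion — comparison against the forced-$x^*$ value function and a strict inequality on $[x',x^*[$ — but for the lemma as stated all of that machinery is superfluous, and, as you honestly flag, it rests on dynamic-programming inputs whose rigorous justification at this early stage of the paper is nontrivial; several of your intermediate claims (e.g.\ monotonicity of $\psi$) are also unnecessary even within your own scheme, since $\psi\le B$ follows directly from $e^{-rT^*_b}\le 1$.
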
 
\begin{proof}
By contradiction, assume that the borrower implement any strategy $(u(\cdot),v(\cdot),x')$ with $x'<x^*$. 
We can construct a better strategy simply avoiding to declare bankruptcy at $x'$, and switching off the controls $(u,v)$ after having reached $x'$ 
until the threshold $x^*$ is reached.
Define $(\hat u(t),\hat v(t))=(u(t),v(t))$ for $0\le t\le T_b(x')$ and $(\hat u(t),\hat v(t))=(0,0)$ for $T_b(x')\le t\le T^*_b$.
In this case, recalling that $L(0)=c(0)=0$, we have 
\begin{align*}
J[x_0, x',u(\cdot),v(\cdot)]=&E\left[\int_0^{T_b(x')} e^{-rt}\Big[L(u(x(t)))+ c(v(x(t)))\Big]\, dt + e^{-r T_b(x')} B\right]_{x(0)=x_0}\\
=&E\left[\int_0^{T^*_b} e^{-rt}\Big[L(u(x(t)))+ c(v(x(t)))\Big]\, dt + e^{-r T_b(x')} B\right]_{x(0)=x_0}\\
\ge&E\left[\int_0^{T^*_b} e^{-rt}\Big[L(\hat u(x(t)))+ c(\hat v(x(t)))\Big]\, dt + e^{-r T^*_b} B\right]_{x(0)=x_0}\\
=&J[x_0, x^*,\hat u(\cdot),\hat v(\cdot)].
\end{align*}
\end{proof}

\par\medskip\par

Thus from now on we will always assume that $x'=x^*$, and the goal of the borrower is to minimize
\begin{multline}\label{eq:borrowercost}
J[x_0,x^*,u(\cdot),v(\cdot)]=E\left[\int_0^{T^*_b} e^{-rt}\Big[L(u(x(t)))+ c(v(x(t)))\Big]\, dt + e^{-r T^*_b} B\right]_{x(0)=x_0}.
\end{multline}

\par\medskip\par

To complete the model, we need an equation determining the discounted bond price $\tilde p(\cdot)$ in the evolution equation \eqref{eq:debt-GDP-ev} for $x(\cdot)$ . 
For every $x>0$, denote by $\theta(x)$ the salvage rate, i.e., the fraction of the outstanding capital that can be recovered by lenders, 
if bankruptcy occurs when the debt has size $x^*$. As in \cite{BMNP}, assuming that the investors are risk-neutral, 
the discounted bond price coincides with the expected payoff to a lender purchasing a coupon with unit nominal value
\begin{multline}\label{eq:bondprice}
p(x_0)= E\Big[\int_0^{T^*_b}(r+\lambda)\exp\left\{-\int
_0^t \bigl(\lambda +r+v(x(s))\bigr)\,ds\right\}dt \\
+\exp\left\{-\int^{T^*_b}_{0}(r+\lambda+v(x(t)))dt\right\}\cdot \theta(x^*)\Big]_{x(0)=x_0},
\end{multline}
for every initial debt $x_0\in [0,x^*]$. We then set $\tilde p(t)=p(x(t))$.

\par\medskip\par

After having described the model, we introduce the definition of optimal solution in feedback form.

\begin{definition}[Stochastic optimal feedback solution]\label{def:sto-opt-sol}
\emph{In connection with the above model, we say that a triple of functions 
$(\bar u(\cdot), \bar v(\cdot), \bar p(\cdot))$ provides an optimal solution
to the problem of optimal debt management  \eqref{eq:debt-GDP-ev})--\eqref{eq:bondprice} if
\begin{itemize}
\item[(i)] Given the function $\bar{p}:[0,x^*]\to [0,1]$, for every initial value $x_0\in [0, x^*]$ the feedback control strategy $(\bar u(\cdot),\bar v(\cdot))$ 
with stopping time $T^*_b$ as in \eqref{eq:bankruptcy-time} provides an optimal solution to the  stochastic control problem \eqref{eq:borrowercost} subject 
to dynamics \eqref{eq:debt-GDP-ev} with $\tilde{p}(t)=\bar{p}(x(t))$.
\item[(ii)] Given the feedback control strategy $(\bar u(\cdot),\bar{v}(\cdot))$, for every initial value $x_0\in [0,x^*]$, the discounted price $\bar{p}(x_0)$
satisfies \eqref{eq:bondprice}, where $T^*_b$ is the stopping time \eqref{eq:bankruptcy-time} determined by the dynamics \eqref{eq:debt-GDP-ev} 
with $\tilde{p}(t)=\bar{p}(x(t))$.
\end{itemize}
}
\end{definition}
For a given function $p$, we denote by the \emph{value function} $V:[0,x^*]\to[0,+\infty[$ of the control system \eqref{eq:debt-GDP-ev} 
with $\tilde p(t)=p(x(t))$ and cost given by \eqref{eq:borrowercost} is 
\begin{equation}\label{eq:value}
V(x_0)=\inf J[x_0, x^*,u(\cdot),v(\cdot)],
\end{equation}
Under the assumptions \textbf{(A1)-(A2)}, the Hamiltonian function $H:[0,x^*]\times \mathbb R\times [0,1]\to \mathbb R$ 
associated to the dynamics \eqref{eq:debt-GDP-ev} and to the cost functions $L,c$ in \eqref{eq:borrowercost} 
is defined by
\begin{equation}\label{eq:Hamiltonian}
H(x,\xi,p)\doteq-L^\circ\left(\dfrac{\xi}{p}\right)-c^\circ(x\xi)+ \left(\dfrac{\lambda+r}{p} -\lambda-\mu+\sigma^2\right) x\, \xi,
\end{equation}
where $L^\circ, c^\circ$ are the convex conjugate of $L,c$ respectively (see Appendix \ref{app:convex}).

%
The necessary conditions for optimality imply that the value function $V(\cdot)$ solves the second order implicit ODE
\[r V(x)=H(x,V'(x),p(x))+\dfrac{\sigma^2x^2}{2}V''(x).\]
Recalling Lemma \ref{lemma:subdiffcost} and Lemma \ref{lemma:Hamgrad}, 
as soon as the value function $V$ is determined, the optimal  feedback strategies are 
\begin{equation}\label{eq:optfeedback-u}
u^*(V'(x),p(x)):=
\begin{cases}0,&\textrm{ if }\ \ \ \dfrac{V'(x)}{p(x)}\le L'(0),\\ \\
(L')^{-1}\left(\dfrac{V'(x)}{p(x)}\right),&\textrm{ if }\ \ \ \dfrac{V'(x)}{p(x)}>L'(0),
\end{cases}
\end{equation}
and 
\begin{equation}\label{eq:optfeedback-v}
v^*(x,V'(x))~:=\begin{cases}
0,&\textrm{ if}\ \ \ V'(x)x\le c'(0)\,,\\ \\
(c')^{-1}\left(V'(x)x\right),&\textrm{ if }\ \ \ V'(x)x>c'(0).
\end{cases}
\end{equation}

On the other hand,  if the feedback optimal controls $\bar u(x)$ and $\bar v(x)$  are known, then by using Feynman-Kac formula, 
we obtain the second order nonlinear ODE for the discounted bond price $p(\cdot)$ in \eqref{eq:bondprice}
\begin{multline*}
(r+\lambda+\bar v(x))p(x)-(r+\lambda)\\ =\left[ \left(\dfrac{\lambda+r}{p(x)} -\lambda-\mu+\sigma^2-\bar v(x)\right) x - 
\dfrac{\bar u(x))}{p(x)}\right]\cdot p'(x)+ \dfrac{(\sigma x)^2}{2} p''(x).
\end{multline*}
Concerning boundary conditions, we notice that
\begin{itemize}
\item since bankruptcy is istantaneously declared at $x^*$, the optimal cost $V(x^*)$ starting from the DTI level $x^*$ reduces only to the bankruptcy cost $B$,
while if we start from zero DTI level, $x(t)\equiv 0$ is a solution of  \eqref{eq:debt-GDP-ev} yielding $V(0)=0$ since, for this trajectory bankruptcy never occurs, 
and so $T^*_b=+\infty$ and we take $u(0)=v(0)=0$ thus the cost is zero.
\item with the same argument, at the bankruptcy threshold the cost of a unitary bond $p(x^*)$ must be equal to the salvage rate $\theta(x^*)$ since this 
is the only money that will be repayed to the lender, while at the zero DTI level, using the same trajectory described in the previous case, we have $p(0)=1$.
\end{itemize}
Recalling \eqref{eq:Hamiltonian}-\eqref{eq:optfeedback-v}, and Lemma \ref{lemma:Hamgrad}, 
we are thus led to the system of second order implicit ODEs
\begin{equation}\label{eq:Sode}
\begin{cases}
rV(x)=H(x,V'(x),p(x)) + \dfrac{\sigma^2x^2}{2}\cdot V''(x),\\ \\
(r+\lambda+v(x))p(x)-(r+\lambda)=H_{\xi}(x,V'(x),p(x))\cdot p'(x)+\dfrac{(\sigma x)^2}{2}\cdot p''(x),\\ \\
v(x)=\underset{v\geq 0~~}{\mathrm{argmin}}\{c(v)-vxV'(x)\},
\end{cases}
\end{equation}
with the boundary conditions
\begin{align}\label{eq:boundary}
V(0)=0,&&V(x^*)=B&&\textrm{ and }&&p(0)=1,&&p(x^*)=\theta(x^*).
\end{align}
Therefore, an optimal feedback  solution to the problem of optimal debt management  \eqref{eq:debt-GDP-ev}--\eqref{eq:bondprice} will be obtained
by solving the above system of ODEs for the value function $V(\cdot)$ and for the discounted bond price $p(\cdot)$ for a given bankruptcy threshold $x^*$.

\section{Stochastic optimal feedback solutions}\label{sec:stoch-analysis}
In this section, we prove the existence of an optimal feedback solution to the problem of optimal debt management \eqref{eq:debt-GDP-ev}--\eqref{eq:bondprice} 
for a given bankruptcy threshold $x^*$, and then we will study the asymptotic behavior of the solution as $x^*\to +\infty$.
\subsection{Existence of optimal feedback solutions} Given a bankruptcy threshold $x^*$, we introduce the constant 
\begin{equation}\label{eq:theta-min}
\theta_{\min}\doteq\min\left\{\theta(x^*), \dfrac{r+\lambda}{r+\lambda+v_{\max}}\right\}.
\end{equation}
which will be a lower bound of the discount bond price $p$. 
\par\medskip\par
Our main result of this subsection is the following.
\begin{theorem}\label{thm:Exist-Stoch} 
In addition to \textbf{(A1)}-\textbf{(A2)}, assume that 
\begin{enumerate}
\item[\textbf{\emph{(A3)}}] The volatility satisfies $\sigma>0$, the recover fraction at the bankruptcy threshold satisfies $\theta(x^*)>0$ and the devaluation rate is bounded, i.e., $0<v_{\max}<+\infty$.
\end{enumerate}
Then there exists a constant $M^*>0$ such that the system of second-order ODEs \eqref{eq:Sode} with boundary conditions 
\eqref{eq:boundary} admits a solution \[(V(\cdot),p(\cdot)):[0,x^*]\rightarrow\times [0,B]\times [\theta_{\min},1]\] of class $C^2$. 
Moreover, the function $V(\cdot)$ is monotone increasing and $v(x)=0$ for all $x\in\left[0,\min\left\{\dfrac{c'(0)}{M^*},x^*\right\}\right]$.
\end{theorem}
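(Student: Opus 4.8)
The plan is to decouple the two equations of \eqref{eq:Sode}, solve each by (degenerate-)elliptic theory, and close the loop with a topological fixed point, extracting $M^*$ from an a priori gradient estimate. Fix a continuous $p\colon[0,x^*]\to[\theta_{\min},1]$. By \textbf{(A3)} one has $\theta_{\min}>0$ (since $\theta(x^*)>0$ and $v_{\max}<+\infty$), so $1/p$ and $(\lambda+r)/p$ in $H,H_\xi$ remain bounded, and $\sigma>0$ keeps the equations genuinely second order. The first and third lines of \eqref{eq:Sode} form the degenerate Hamilton--Jacobi--Bellman equation $rV=\frac{\sigma^2x^2}{2}V''+H(x,V',p)$ with $v(x)=v^*(x,V'(x))$ from \eqref{eq:optfeedback-v}, the condition at $x^*$ being $V(x^*)=B$ while the left endpoint is a degenerate (Fichera-type) boundary carrying no effective condition --- any bounded solution on $(0,x^*]$ automatically satisfies $V(0)=0$. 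Since $L,c$ are strictly convex with $L'',c''\ge\delta_0>0$, the conjugates $L^\circ,c^\circ$ are nonnegative and $C^{1,1}$, so $\xi\mapsto H(x,\xi,p)$ is concave and $C^1$ and $H$ is Lipschitz in $(x,p)$; hence a comparison principle holds. I would construct $V=V[p]$ by solving the non-degenerate problems on $[\varepsilon,x^*]$, deriving $\varepsilon$-uniform interior estimates, and letting $\varepsilon\to0^+$; comparison with the constant sub- and supersolutions $0$ and $B$ (using $L(0)=c(0)=0$) gives $0\le V[p]\le B$.

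Given $V[p]$, define the feedbacks $u^*(V'[p](x),p(x))$ and $v^*(x,V'[p](x))$ via \eqref{eq:optfeedback-u}--\eqref{eq:optfeedback-v}; note $v^*\in[0,v_{\max}[$ because $c\equiv+\infty$ outside that interval. Let $\mathcal F[p]$ be the solution of the \emph{linear} two-point problem obtained from the second line of \eqref{eq:Sode} by freezing $H_\xi(x,V'[p](x),p(x))$ and these feedbacks, with $\mathcal F[p](x^*)=\theta(x^*)$ at the right endpoint and, again, no effective condition at the degenerate point $x=0$ (where boundedness forces $\mathcal F[p](0)=1$); by Feynman--Kac, $\mathcal F[p]$ is precisely the discounted bond price \eqref{eq:bondprice} along the dynamics \eqref{eq:debt-GDP-ev} driven by $p$. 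Since $0\le v(\cdot)\le v_{\max}$, bounding $\exp\{-\int_0^t(\lambda+r+v)\,ds\}$ between $e^{-(\lambda+r+v_{\max})t}$ and $e^{-(\lambda+r)t}$ and using the definition \eqref{eq:theta-min} of $\theta_{\min}$ gives $\theta_{\min}\le\mathcal F[p]\le1$. Moreover $\mathcal F$ is continuous (stability of $V[p]$ in $p$, then of a linear ODE in its coefficients) and compact on $\mathcal K=\{p\in C([0,x^*]):\theta_{\min}\le p\le1\}$ (uniform $C^{1,\alpha}_{\mathrm{loc}}(0,x^*]$ bounds from Schauder estimates, plus endpoint equicontinuity from barriers), so Schauder's fixed-point theorem yields $\bar p=\mathcal F[\bar p]$; then $(\bar V,\bar p):=(V[\bar p],\bar p)$ solves \eqref{eq:Sode}--\eqref{eq:boundary}, and $C^2$ regularity on $(0,x^*]$ follows by reading $V''$ and $p''$ off the equations, their right-hand sides being continuous there. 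Equivalently, $(\bar V,\bar p)$ may be obtained as a steady state of the auxiliary parabolic system formed by adjoining $\partial_\tau V,\partial_\tau p$ to \eqref{eq:Sode}, the same a priori bounds giving global existence and compactness of the time-$\tau$ map.

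Finally, monotonicity of $\bar V$ and the constant $M^*$ both come from the a priori bound $0\le V'[p]\le M^*$, uniform in $p\in\mathcal K$. For monotonicity one uses $H(x,0,p)=0$: at an interior local maximum $\bar x$ of $V[p]$ one would get $rV(\bar x)=\frac{\sigma^2\bar x^2}{2}V''(\bar x)+H(\bar x,0,p(\bar x))\le0$, hence $V(\bar x)\le0$; combined with $V\ge0$ and ODE uniqueness on $(0,x^*]$ (which forbids an interior zero unless $V\equiv0$), this gives, for $B>0$, $V>0$ on $(0,x^*]$ and $V(x)=\max_{[0,x]}V$ for all $x$, i.e. $V$ nondecreasing (the case $B=0$ being trivial), so $V'[p]\ge0$. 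The substantive point is the \emph{upper} bound $V'[p]\le M^*$, which is where $M^*$ is produced: one combines $0\le V\le B$ with the superlinear-type growth of $L^\circ$ at the singularity of $L$ --- equivalently the blow-up of the running cost $L(u^*)$ as $u^*\to1^-$ --- in a barrier (Bernstein-type) estimate run on $(0,x^*]$, yielding $M^*$ depending only on $B,x^*,\theta_{\min}$ and the structural data, in the spirit of \cite{BMNP}. Once $0\le\bar V'\le M^*$, for every $x\in[0,\min\{c'(0)/M^*,x^*\}]$ we have $x\,\bar V'(x)\le M^*x\le c'(0)$, so by \eqref{eq:optfeedback-v} the optimal devaluation feedback is $v(x)=v^*(x,\bar V'(x))=0$, which is the last assertion. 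The main obstacle is exactly this uniform gradient estimate, together with the careful handling of the degenerate endpoint $x=0$ (defining the solution operator, compactness, and regularity there); the remaining tools --- comparison principles, Schauder estimates, Feynman--Kac, and a fixed-point theorem --- are standard.
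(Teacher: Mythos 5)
Your proposal follows a genuinely different route from the paper. You decouple the two equations, fixing $p$, constructing $V[p]$ by degenerate-elliptic theory, feeding the resulting feedbacks into the $p$-equation to obtain $\mathcal F[p]$, and closing with a Schauder fixed point on $\{p\in C([0,x^*]):\theta_{\min}\le p\le 1\}$. The paper instead keeps the system coupled: it replaces \eqref{eq:Sode} by a uniformly parabolic system (adding $\varepsilon$ to both $p$ and the diffusion coefficient), shows that the box $[0,B]\times[\theta_{\min},1]$ is positively invariant via constant sub/supersolutions, invokes Amann's theorem to get a steady state $(V_\varepsilon,p_\varepsilon)$, derives $\varepsilon$-uniform interior $C^2$ bounds, and passes to the limit by Ascoli--Arzel\`a, treating the degenerate endpoint $x=0$ with an explicit lower barrier $p^-(x)=1-cx^\gamma$. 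Your scheme is plausible, but it pushes the technical difficulties (well-posedness of the $V$-equation at the degenerate Fichera boundary, continuity and compactness of the map $p\mapsto V[p]\mapsto\mathcal F[p]$) into places you merely gesture at, whereas the parabolic route localizes them all into one $\varepsilon$-approximation and one limiting argument.

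There is, however, a genuine gap at the step you correctly single out as ``the main obstacle'': the uniform bound $0\le V'\le M^*$. Your mechanism --- ``superlinear-type growth of $L^\circ$'' / ``blow-up of $L(u^*)$ as $u^*\to 1^-$'' fed into a Bernstein-type barrier --- is misidentified. From Lemma \ref{lemma:subdiffcost}, $L^\circ(\rho)\le\max\{0,\rho\}$: the conjugate is \emph{sub}linear, not superlinear, and the blow-up of $L$ near $u=1$ only ensures $u^*<1$; it is not what drives the gradient estimate. The paper's argument is quite different and elementary: for $x$ small and $\xi$ large it bounds $H(x,\xi,p+\varepsilon)\le L(\tfrac12)-\tfrac{\xi}{4(p+\varepsilon)}<0$ (test with the fixed admissible control $u=\tfrac12$), so that the HJB equation forces $V_\varepsilon''>0$, ruling out a local maximum of $V_\varepsilon'$ in $[0,\bar x_1]$ above the threshold $8L(\tfrac12)$; on $[\bar x_1,x^*]$ the diffusion is nondegenerate, and $|V_\varepsilon''|\lesssim V_\varepsilon'+1$ together with a mean-value anchor and Gr\"onwall closes the estimate. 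As written, your proposal does not produce $M^*$. A secondary, smaller gap: your monotonicity argument (interior local max of $V$ $\Rightarrow$ $V$ vanishes there $\Rightarrow$ strong maximum principle) both needs a strong maximum principle for the degenerate HJB operator and does not by itself exclude non-monotone behavior without interior local maxima; the paper's argument --- at a point where $V_\varepsilon'<0$ and $V_\varepsilon''=0$ the equation forces $H\ge 0$ while Lemma \ref{lemma:Hprop}(1) forces $H<0$ --- is direct and avoids these issues.
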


It is well-known (see Theorem 4.1, p.149, in \cite{FR} or Theorem 11.2.2, p. 141, in \cite{O}) that if $(V(\cdot),p(\cdot))$  is a solution to the boundary value problem
\eqref{eq:Sode}-\eqref{eq:boundary}, then a standard result in the theory of stochastic optimization implies that the feedback control strategy 
$(u^*(V'(\cdot),p(\cdot)),v^*(\cdot,V'(\cdot)))$ given by \eqref{eq:optfeedback-u}-\eqref{eq:optfeedback-v} is optimal for the problem \eqref{eq:borrowercost} with dynamics \eqref{eq:debt-GDP-ev}. 
As a consequence, from Theorem \ref{thm:Exist-Stoch} we deduce 

\begin{corollary} 
Under the same assumptions of Theorem \ref{thm:Exist-Stoch}, the debt management problem \eqref{eq:borrowercost} with dynamics \eqref{eq:debt-GDP-ev} 
admits an optimal control strategy in feedback form. Moreover, there is a threshold level for DTI such that the optimal control strategy does not use 
currency devaluation for values of DTI below that threshold. 
\end{corollary}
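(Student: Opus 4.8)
The plan is to deduce the corollary from Theorem \ref{thm:Exist-Stoch} by means of the classical verification theorem of stochastic optimal control (\cite{FR}, \cite{O}); the point is that Theorem \ref{thm:Exist-Stoch} furnishes exactly the $C^2$ solution of the Hamilton--Jacobi--Bellman boundary value problem \eqref{eq:Sode}--\eqref{eq:boundary} that such results require as a hypothesis. Fix the bankruptcy threshold $x^*$ (by Lemma \ref{lemma:bestbank} we may assume $x'=x^*$), let $M^*>0$ and $(V,p)\colon[0,x^*]\to[0,B]\times[\theta_{\min},1]$ of class $C^2$ be as in Theorem \ref{thm:Exist-Stoch}, put $\bar p:=p$, and define the feedback pair $\bar u(x):=u^*(V'(x),p(x))$ and $\bar v(x):=v^*(x,V'(x))$ through \eqref{eq:optfeedback-u}--\eqref{eq:optfeedback-v}. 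Since $V'$ is continuous on the compact interval $[0,x^*]$ and $p\ge\theta_{\min}>0$, the quantities $V'(x)/p(x)$ and $xV'(x)$ are bounded, so $\bar u$ ranges in a compact subset of $[0,1[$ and $\bar v$ in a compact subset of $[0,v_{\max}[$; hence $L(\bar u(\cdot))$ and $c(\bar v(\cdot))$ are bounded, and the closed-loop equation \eqref{eq:debt-GDP-ev} with $\tilde p=\bar p(x(\cdot))$, $\tilde u=\bar u(x(\cdot))$, $\tilde v=\bar v(x(\cdot))$ has bounded measurable coefficients and a diffusion term that is non-degenerate on $(0,x^*]$ with $0$ absorbing, so it admits a (weak) solution on $[0,x^*]$; consequently $J[x_0,x^*,\bar u,\bar v]<+\infty$.

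The core is then the verification step. Reading off the definition \eqref{eq:Hamiltonian} of $H$ through the convex conjugates $L^\circ,c^\circ$, one has for every $x\in[0,x^*]$, every $\xi\in\mathbb R$ and every admissible $(u,v)$
\[ L(u)+c(v)+\Big[\Big(\tfrac{\lambda+r}{p(x)}-\lambda-\mu+\sigma^2-v\Big)x-\tfrac{u}{p(x)}\Big]\xi \ \ge\ H(x,\xi,p(x)), \]
with equality when $\xi=V'(x)$, $u=\bar u(x)$, $v=\bar v(x)$, precisely by the construction \eqref{eq:optfeedback-u}--\eqref{eq:optfeedback-v} of $u^*,v^*$ (Lemma \ref{lemma:Hamgrad}). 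Given any feedback strategy $(u,v,x^*)$ with finite cost, apply It\^o's formula to $t\mapsto e^{-rt}V(x(t))$ along the corresponding solution of \eqref{eq:debt-GDP-ev} on $[0,T^*_b\wedge n]$; using the first equation of \eqref{eq:Sode} and the inequality above, together with the fact that $\int_0^{\cdot}e^{-rt}\sigma\,x(t)\,V'(x(t))\,dW(t)$ is a true martingale because its integrand is bounded on $[0,x^*]$, and taking expectations, we get
\[ V(x_0)\ \le\ E\!\left[\int_0^{T^*_b\wedge n} e^{-rt}\big(L(u(x(t)))+c(v(x(t)))\big)\,dt+e^{-r(T^*_b\wedge n)}V(x(T^*_b\wedge n))\right]. \]
As $n\to\infty$ the running-cost term increases to $E\int_0^{T^*_b}e^{-rt}(L+c)\,dt$ (monotone convergence), while $0\le e^{-r(T^*_b\wedge n)}V(x(T^*_b\wedge n))\le B$ converges pointwise to $e^{-rT^*_b}B$ on $\{T^*_b<+\infty\}$ (path-continuity gives $x(T^*_b)=x^*$ and $V(x^*)=B$) and to $0$ on $\{T^*_b=+\infty\}$, so dominated convergence yields $V(x_0)\le J[x_0,x^*,u,v]$. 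Running the same computation along the closed-loop solution driven by $(\bar u,\bar v)$, where every inequality becomes an equality, gives $V(x_0)=J[x_0,x^*,\bar u,\bar v]$. Therefore $V$ coincides with the value function \eqref{eq:value} and $(\bar u(\cdot),\bar v(\cdot))$ is an optimal control strategy in feedback form; moreover, since $(V,p)$ also solves the second equation of \eqref{eq:Sode} with $p(0)=1$ and $p(x^*)=\theta(x^*)$, the Feynman--Kac formula shows that $\bar p$ satisfies \eqref{eq:bondprice} relative to $(\bar u,\bar v)$, so $(\bar u,\bar v,\bar p)$ is in fact an equilibrium solution in the sense of Definition \ref{def:sto-opt-sol}.

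For the second assertion, put $\bar x:=\min\{c'(0)/M^*,x^*\}$. By the last statement of Theorem \ref{thm:Exist-Stoch}, $\bar v(x)=v^*(x,V'(x))=0$ for all $x\in[0,\bar x]$: the optimal feedback never employs currency devaluation while the debt-to-income ratio stays below the threshold $\bar x$, which is strictly positive as soon as $c'(0)>0$. The step I expect to be the most delicate is this verification argument --- concretely, the interchange of limit and expectation in the localization as $n\to\infty$ and the existence of a solution to the closed-loop stochastic differential equation for the (possibly non-Lipschitz) optimal feedback $(\bar u,\bar v)$ --- but both are standard in this degenerate one-dimensional, bounded-domain setting once the $C^2$ regularity of $(V,p)$ has been secured by Theorem \ref{thm:Exist-Stoch} (cf.\ Theorem 4.1 in \cite{FR} and Theorem 11.2.2 in \cite{O}), so the corollary carries no difficulty of its own beyond that reduction.
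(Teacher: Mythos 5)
Your proposal is correct and follows essentially the same route as the paper: the paper deduces the corollary from Theorem \ref{thm:Exist-Stoch} by invoking the standard verification theorem (Theorem 4.1 in \cite{FR}, Theorem 11.2.2 in \cite{O}) for the $C^2$ solution of \eqref{eq:Sode}--\eqref{eq:boundary}, and reads the devaluation threshold off the last assertion of that theorem, exactly as you do. The only difference is that you write out the verification argument (It\^o's formula, localization, passage to the limit) explicitly where the paper simply cites the references, and your details are sound.
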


The main idea of the proof of Theorem \ref{thm:Exist-Stoch} is to define a family of auxiliary uniformly parabolic evolutive problems, indexed by a parameter $\varepsilon>0$, whose 
steady states will provide an approximate solution of \eqref{eq:Sode}.
For each of such problems, we prove the existence of the steady state,  by constructing a compact, convex and positively invariant 
set of functions $(V,p): [0, x^*]\mapsto [0,B]\times [\theta_{\min},1]$. A topological technique will then yield the existence of a steady state $V_\varepsilon(\cdot)$ for the $\varepsilon$-problem.
After having derived some uniform estimates on the approximate solution $V_\varepsilon(\cdot)$, we are able to pass to the limit as $\varepsilon\to 0^+$, obtaining a solution of \eqref{eq:Sode}.
The main difficulty is the degeneration of the coefficient of the second derivative of $V(\cdot)$ in \eqref{eq:Sode}, which must be tackled by a suitable approximation argument.

\begin{proof}[Proof of Theorem \ref{thm:Exist-Stoch}.]
From assumptions \textbf{(A1)-(A3)} and Lemma \ref{lemma:Hamgrad}, for all $(x,\xi)\in [0,+\infty[\times \mathbb R$ it holds 
\begin{align}\label{eq:estoptimal-v}
v^*(x,\xi)\le v_{\max},&&|v^*_{x}(x,\xi)|\le \dfrac{1}{\delta_0}\cdot |\xi|&&\mathrm{ and }&&|v^*_{\xi}(x,\xi)|\le \dfrac{1}{\delta_0}\cdot |x|.
\end{align}
For any $\varepsilon>0$, consider the parabolic system in the unknown $\mathbf{V}=\mathbf{V}(t,x)$, $\mathbf{p}=\mathbf{p}(t,x)$
\begin{equation}\label{eq:parabolic-uniform}
\begin{cases}
\mathbf{V}_t &=~-r\mathbf{V}+ H(x,\mathbf{V}_x,\mathbf{p}+\varepsilon)+ \left(\varepsilon+\dfrac{(\sigma x)^2}{2}\right)\mathbf{V}_{xx},\\ \\
\mathbf{p}_t &=~(r+\lambda)-(r+\lambda+v^*(x,\mathbf{V}_x))\mathbf{p}+ H_{\xi}(x,\mathbf{V}_x,\mathbf{p}+\varepsilon)\mathbf{p}_x+\left(\varepsilon+\dfrac{(\sigma x)^2}{2}\right)\mathbf{p}_{xx},
\end{cases}
\end{equation}
together with the boundary conditions 
\begin{align}\label{eq:parabolic-bd}
\begin{cases}\mathbf{V}(t,0)&=~0,\\ \\ \mathbf{V}(t,x^*)&=~B,\end{cases}&&\begin{cases}\mathbf{p}(t,0)&=~1,\\ \\ \mathbf{p}(t,x^*)&=~\theta(x^*),\end{cases}
\end{align}
and notice that the system is uniformly parabolic also in a neighborhood of $x=0$. The remaining part of the proof is divided into three main steps.
\par\medskip\par
\noindent \textbf{Step 1.} \emph{The system \eqref{eq:parabolic-uniform}-\eqref{eq:parabolic-bd} admits a steady state solutions $(V_{\varepsilon}(\cdot),p_{\varepsilon}(\cdot))$ for every given $\varepsilon>0$. Moreover, for $\varepsilon$ sufficiently small, $V_\varepsilon(\cdot)$ is monotone increasing.}
\par\bigskip\par
\begin{enumerate}
\item [(i).] Recalling Theorem 1 in \cite{A}, for every pair  $(V_0,p_0)\in C^2([0,x^*])\times  C^2([0,x^*])$, the system \eqref{eq:parabolic-uniform}-\eqref{eq:parabolic-bd} with the initial data
\begin{align}\label{eq:parabolic-initial-data}
\mathbf{V}(0,x) = V_0(x)\qquad\textrm{ and }\qquad \mathbf{p}(0,x) = p_0(x)
\end{align}
admits a unique solution $\left(\mathbf{V}(t,x),\mathbf{p}(t,x)\right)$ in $ C^2([0,T]\times [0,x^*])\times  C^2([0,T]\times [0,x^*])$. 
Adopting a semigroup notation, let $t\mapsto S_t(V_0, p_0):=(\mathbf{V}(t,\cdot), \mathbf{p}(t,\cdot))$ be the solution of \eqref{eq:parabolic-uniform}-\eqref{eq:parabolic-bd} 
with initial data \eqref{eq:parabolic-initial-data}. Recalling the definition of $\theta_{\min}$ in \eqref{eq:theta-min},
%
we claim that the following closed, convex set of $ C^2$ functions 
\[
\mathcal{D} = \left\{ (V_0,p_0)\in C^2([0,x^*];[0,B])\times C^2([0,x^*];[\theta_{\min},1]):\, V_0,p_0\textrm{ satisfy \eqref{eq:boundary}}\right\}.
\]
is positively invariant under the semigroup $S_{t}$, namely
\[S_t(\mathcal{D})~\subseteq~\mathcal{D},\textrm{ for all }t\geq 0.\]
Indeed, consider  the constant functions
\begin{align*}\begin{cases}\mathbf{V}^+(t,x)&=~B,\\ \\ \mathbf{V}^-(t,x)&=~0,\end{cases}&&\begin{cases}\mathbf{p}^+(t,x)&=~1,\\ \\  \mathbf{p}^-(t,x)&=~\theta_{\min}.\end{cases}\end{align*} 
Recalling Lemma \ref{lemma:Hprop} (2), we have 
\[H(x,\mathbf{V}^+_{x},\mathbf{p}+\varepsilon) = H(x,\mathbf{V}^-_{x},\mathbf{p}+\varepsilon) = H(x,0,\mathbf{p}+\varepsilon)=0.\]
It can be easily checked that $\mathbf{V}^+$ is a supersolution and $\mathbf{V}^-$ is a subsolution of the first scalar 
parabolic equation in \eqref{eq:parabolic-uniform}. 
Moreover, $\mathbf{p}^+$ is a supersolution and $\mathbf{p}^-$ is a subsolution of the second scalar parabolic equation in \eqref{eq:parabolic-uniform}. 
Therefore, for any pair of functions $(V_0,p_0)$ in \eqref{eq:parabolic-initial-data} taking values in the box $[0, B]\times [\theta_{\min},\, 1]$, 
the solution of the system
\eqref{eq:parabolic-uniform} will satisfy 
\[
0\le \textbf{V}(t,x)\le  B\qquad\textrm{ and }\qquad \theta_{\min}\le \mathbf{p}(t,x)\le 1
\]
for all $(t,x)\in [0.+\infty)\times [0,x^*]$.
\par\medskip\par
\item [(ii).] Thanks to the bounds of Lemma \ref{lemma:Hprop} (1) and \eqref{eq:estoptimal-v} we can apply Theorem 3 in \cite{A} and obtain the existence of a steady state 
$( V_\varepsilon(\cdot), p_\varepsilon(\cdot))\in \mathcal{D}$ for the system 
\eqref{eq:parabolic-uniform}-\eqref{eq:parabolic-bd}, i.e., a solution of
\begin{equation}\label{eq:steady}
\begin{cases}\displaystyle
-rV+ H(x,V',p+\varepsilon) + \left(\varepsilon+\dfrac{(\sigma x)^2}{2}\right)
V'' = 0\,,\\[4mm]
(r+\lambda)-(r+\lambda+v(x))p+ \displaystyle H_{\xi}(x,V',p+\varepsilon) p'
+ \left(\varepsilon+\dfrac{(\sigma x)^2}{2}\right) p'' = 0\,,\end{cases}\end{equation}
with
\begin{equation*}\label{eq:steady-v}
v(x)\doteq v^*(x,V'(x)) = \begin{cases} 0,&\textrm{ if }~~V'(x)x\le  c'(0)\\ \\ (c')^{-1}\left(V'(x)x\right),&\textrm{ if }~~V'(x)x>c'(0),\end{cases}
\end{equation*}
To complete this step, we show that $V_{\varepsilon}$ is monotone increasing for $\varepsilon>0$ sufficiently small. 
Since $V_{\varepsilon}(0)=0$, $V_{\varepsilon}(x^*)=B$ and $V_{\varepsilon}(x)\in [0,B]$ for all $x\in [0,B]$, it holds
\begin{align*}\lim_{x\to 0+}V'_{\varepsilon}(x)\ge  0\qquad\textrm{ and }\qquad\lim_{x\to x^{*}-} V'_{\varepsilon}(x)\ge  0.\end{align*} 
Assume by a contradiction that $V_{\varepsilon}$ is not monotone increasing. Then there exists $x_0\in ]0,x^*[$ such that 
\begin{align*}V'_{\varepsilon}(x_0)<0\qquad\textrm{ and }\qquad V''_{\varepsilon}(x_0)=0.\end{align*}
In particular, the first equation of \eqref{eq:steady} implies that
\[H(x_0,V'_{\varepsilon}(x_0),p_{\varepsilon}(x_0)+\varepsilon) = rV_{\varepsilon}(x_0)\ge  0.\]
On the other hand, for any given $0<\varepsilon<\dfrac{r-\mu}{\lambda+\mu}$, since $p_{\varepsilon}(x_0)\leq 1$, it holds
\[
\dfrac{\lambda+r}{p_{\varepsilon}(x_0)+\varepsilon} -(\lambda+\mu) + \sigma^2\ge \dfrac{\lambda+r}{1+\varepsilon}-(\lambda+\mu) + \sigma^2 = 
~\dfrac{{r-\mu}-(\lambda+\mu)\varepsilon}{1+\varepsilon}+\sigma^2>0.
\]
Thus, Lemma \ref{lemma:Hprop} (1) yields
\[
H(x_0,V'_{\varepsilon}(x_0),p_{\varepsilon}(x_0)+\varepsilon)\le  \left(\dfrac{\lambda+r}{p_{\varepsilon}(x_0)+\varepsilon} -\lambda-\mu + \sigma^2\right) x_0\, V'_{\varepsilon}(x_0)<0,
\]
and it yields a contradiction, since the left hand side is nonnegative.
\end{enumerate}
\par\medskip\par
\noindent \textbf{Step 2.} \emph{We now derive a priori estimate on these stationary solutions, which will allow to pass to the limit as $\varepsilon\to 0^+$.}
\par\bigskip\par
\begin{enumerate}
\item [(i).] We start providing an upper bound for $\|V'_{\varepsilon}\|_{{L}^{\infty}(]0,x^*[)}$. 
For any $(x,\xi,p)\in [0,x^*]\times[0,+\infty[\times]0,1]$, since $c^\circ(v)\ge 0$, $\lambda,\mu\ge 0$, it holds
\begin{align*}
H(x,\xi,p+\varepsilon)~\leq&~\min_{u\in [0,1]}~\left\{ L(u) - \dfrac{u}{p+\varepsilon}\, \xi\right\}+ \left(\dfrac{\lambda+r}{p+\varepsilon}+ \sigma^2\right) x \xi\\
~\leq&~\dfrac{{2(\lambda+r+\sigma^2)x-1}}{2p+2\varepsilon}\cdot \xi+L\left(\dfrac{1}{2}\right).
\end{align*}
In particular, for $0<\varepsilon<1$ , set $\bar{x}_1\doteq\min\left\{\dfrac{1}{4(\lambda+r+\sigma^2)}, \dfrac{x^*}{2}\right\}$ and  $\overline{M}_1\doteq 8 L\left(\dfrac{1}{2}\right)>0$, it holds 
\begin{equation}\label{eq:lo-H}
H(x,\xi,p+\varepsilon)\le  -\dfrac{1}{4 p+4\varepsilon}\cdot \xi+L\left(\dfrac{1}{2}\right)<0
\end{equation}
for all $(x,\xi,p)\in [0,\bar{x}_1]\times[\overline{M}_1,+\infty[\times[0,1]$. As a consequence if $V'_{\varepsilon}(\tilde x)>\overline{M}_1$ for a certain $\tilde x\in [0,\bar x_1]$, from the first equation of \eqref{eq:steady}, 
we have that $V''_{\varepsilon}(\tilde x)>0$, and so $\tilde x$ cannot be a local maximum for $V'_\varepsilon(\cdot)$. 
This implies that either $V_\varepsilon'(x)\le \overline{M}_1$ for all $x\in [0,x^*]$, or $V_\varepsilon'(\cdot)$ attains its maximum in $[\bar{x}_1,x^*]$.\par
Hence, we only need to show that $V'_{\varepsilon}$ is bounded in $[\bar{x}_1,x^*]$. Since $V'_\varepsilon(x)\ge 0$ and $V_\varepsilon(\cdot)$ is bounded from above by $B$, from the first equation of \eqref{eq:steady} and Lemma \ref{lemma:Hprop} (1), one gets
\begin{align*}
|V''_{\varepsilon}(x)|=&\dfrac{2}{2\varepsilon+\sigma^2 x^2}\cdot \left(|H(x,V'_{\varepsilon}(x),p_{\varepsilon}(x))|+rV_{\varepsilon}(x)\right)\\
\leq&\dfrac{2}{\sigma^2\bar{x}_1^2}\cdot \left[\dfrac{1+(r+\lambda)x^*}{\theta_{\min}}+(\sigma^2+\mu+\lambda+v_{\max})\cdot x^*\right]\cdot V'_{\varepsilon}(x)+\dfrac{2rB}{\sigma^2\bar{x}^2_1}
\end{align*}
for all $x\in [\bar{x}_1,x^*]$. On the other hand, by the mean value theorem, there exists a point $\hat x\in [\bar{x}_1, x^*]$ where 
\[V_{\varepsilon}'(\hat x) = \dfrac{V_{\varepsilon}(x^*)- V_{\varepsilon}(\bar{x}_1)}{x^*-\bar{x}_1}\le \dfrac{2B}{x^*}\]
By Gr\"onwall's lemma, from the above differential inequality and the estimate on $V_{\varepsilon}(\hat x)$, we obtain an upper bound on $V_{\varepsilon}'(x)$, uniform in $\varepsilon$, 
for all $x\in [\bar{x}_1, \hat x]\cup[\hat x, x^*]$.
Therefore there exists $M^*>0$ which does not depend on $\varepsilon$ such that 
\begin{equation}\label{eq:upperboundV'}
0\le V'_{\varepsilon}(x)\le  M^*\qquad\forall x\in ]0,x^*[.
\end{equation}
As a consequence, \eqref{eq:steady-v} implies that 
\begin{align}\label{eq:nodev}
0\le v_{\varepsilon}(x)\doteq v^*(x,V_{\varepsilon}'(x))\le  (c')^{-1}\left(M^*x\right)\qquad\forall x\in [0,x^*]
\end{align}
and, in particular, we have the following estimate concerning the use of no devaluation in the control strategy
\begin{align*}
v_{\varepsilon}(x) = 0~~\textrm{ for all } x\in \left[0,\min\left\{\dfrac{c'(0)}{M^*},x^*\right\}\right].
\end{align*}
\item [(ii).] We now provide uniform bounds on $\|V''_{\varepsilon}\|_{L^{\infty}([\delta,x^*[)}$, $\|p'_{\varepsilon}\|_{L^{\infty}([\delta,x^*[)}$ 
and $\|p''_{\varepsilon}\|_{L^{\infty}([\delta,x^*[)}$ for any fixed $0<\delta<x^*$. From Lemma \ref{lemma:Hprop} (1) and \eqref{eq:upperboundV'}, 
recalling that $p_{\varepsilon}(x)\ge\theta_{\min}$, there exist $K_1,K_2>0$, which are independent on $\varepsilon$, such that 
\begin{align*}
\left |H(x,V'_{\varepsilon}(x),p_{\varepsilon}(x))\right|\le  K_1\qquad \textrm{and}\qquad\left |H_{\xi}(x,V'_{\varepsilon}(x),p_{\varepsilon}(x))\right|\le ~K_2~
\end{align*}
for all $x\in [0,x^*]$. Thus, the first equation in \eqref{eq:steady} yields
\begin{equation}\label{eq:boundV''}
\|V''_{\varepsilon}\|_{{L}^{\infty}([\delta,x^*[)}\le \tilde{C}_{1,\delta}\doteq \dfrac{2rB+2K_1}{\sigma^2\delta^2}.
\end{equation}
Moreover, multiplying the second equation in \eqref{eq:steady} by $p'_{\varepsilon}(x)$ and using the estimates on $H_\xi$, we have 
\begin{equation*}
p''_{\varepsilon}(x)p'_{\varepsilon}(x)\le \dfrac{2K_2}{\sigma^2\delta^2}\cdot\big|p'_{\varepsilon}(x)\big|^2+\dfrac{2(r+\lambda+v_{\max})}{\sigma^2\delta^2}\cdot |p'_{\varepsilon}(x)|\quad\forall x\in [\delta,x^*).
\end{equation*}
Set $z_\varepsilon(x)=\dfrac 12\big|p'_{\varepsilon}(x)\big|^2$, we have for all $x\in [\delta,x^*[$
\begin{align*}
\dot z_{\varepsilon}(x)~\leq&~\dfrac{4K_2}{\sigma^2\delta^2}\cdot z_\varepsilon(x)+\dfrac{2\sqrt 2(r+\lambda+v_{\max})}{\sigma^2\delta^2}\cdot\sqrt{z_{\varepsilon}(x)}\\
~\leq&~\dfrac{4K_2}{\sigma^2\delta^2}\cdot z_\varepsilon(x)+\dfrac{\sqrt 2(r+\lambda+v_{\max})}{\sigma^2\delta^2}\cdot(z_{\varepsilon}(x)-1)+1.\nonumber
\end{align*}
By the mean value theorem,  there exists a point $\hat{x}\in [\delta,x^*]$ such that 
\[
z_\varepsilon(\hat x) = |p_{\varepsilon}'(\hat{x})|^2 = \dfrac 12\Big|\dfrac{p_{\varepsilon}(x^*)-p_{\varepsilon}(\delta)}{ x^*-\delta}\Big|^2\le  \dfrac{1}{2(x^*-\delta)^2}.
\]
The Gr\"onwall's lemma applied to $z_{\varepsilon}(\cdot)$, together with the second equation in \eqref{eq:steady}, yield
\begin{align}\label{eq:bound-p''} 
\|p_{\varepsilon}'\|_{L^{\infty}([\delta,x^*[)}\le \tilde{C}_{2,\delta}\qquad \textrm{ and }\qquad \|p_{\varepsilon}''\|_{L^{\infty}([\delta,x^*[)}\le \tilde{C}_{3,\delta}
\end{align}
for some constants $\tilde{C}_{2,\delta},\tilde{C}_{3,\delta}$, and the estimate is valid uniformly as $\varepsilon\to 0^+$.
\end{enumerate}
\v
\noindent \textbf{Step 3.} \emph{We finally prove the existence of a solution to \eqref{eq:Sode}-\eqref{eq:boundary} by letting $\varepsilon\to0^+$.}
\par\bigskip\par
\begin{enumerate}
\item [(i).] Recalling \eqref{eq:Hamiltonian}-\eqref{eq:optfeedback-u}, Lemma \ref{lemma:Hamgrad}, and \eqref{eq:upperboundV'}, we obtain that  $H$ and $H_{\xi}$ are uniformly Lipschitz on $[\delta,x^*]\times [0,M^*]\times[\theta_{\min},1]$. Hence,  the functions 
\[\left(V_{\varepsilon}\right)'' = \dfrac{2}{2\varepsilon+\sigma^2x^2}\cdot \left[rV_{\varepsilon}-H(x,(V_{\varepsilon})',p_{\varepsilon}+\varepsilon)\right]\]
and
\[\left(p_{\varepsilon}\right)'' = \dfrac{2}{2\varepsilon+\sigma^2x^2}\cdot \left[(r+\lambda)\cdot (p_{\varepsilon}-1)-H_{\xi}(x,(V_{\varepsilon})',p_{\varepsilon}+\varepsilon) p'_{\varepsilon}\right]\]
are also uniformly bounded and uniformly Lipschitz on $[\delta, x^*]$. Thus, by Ascoli-Arzel\`a theorem, choosing a suitable sequence $\varepsilon_n\to 0^+$, we have the uniform convergence  
$(V_{\varepsilon_n}, p_{\varepsilon_n})\to (V,p)$ in $C^2(]\delta,x^*[)$ for all $\delta>0$, where $V,p$ are twice continuously differentiable
and solve the system of ODEs \eqref{eq:Sode} on the open interval $]0, x^*[$. Moreover, recalling \eqref{eq:upperboundV'}, \eqref{eq:boundV''}, and \eqref{eq:bound-p''}, it holds
\begin{align*}
\lim_{x\to x^{*-}}V(x) = B,\qquad \lim_{x\to x^{*-}}p(x) = \theta(x^*)\qquad \textrm{and}\qquad\lim_{x\to 0^+}V(x)=0.
\end{align*}
\item [(ii).] To complete the proof, we will show that $\displaystyle\lim_{x\to0^+} p(x)=1$ by providing a lower bound for $p_{\varepsilon}(\cdot)$ in a right neighborhood of $x=0$, 
independent of $\varepsilon$. Consider the function 
\[p^-(x) = 1- c x^\gamma\]
where 
\[
c = 1+\dfrac{2(M^*)^2}{(r+\lambda)\cdot \delta}+\dfrac{1}{x^*}\quad\textrm{and}\quad \gamma = \min\left\{\dfrac{1}{2}\,, ~\dfrac{(r+\lambda)}{\left(\frac{\lambda+r}{ \theta_{\min}} -\lambda-\mu+\sigma^2\right)}\right\}.
\]
We prove that  $p^-(\cdot)$ is a lower solution of the second equation of \eqref{eq:steady} in the interval $[0,\bar{x}_0]$ with 
\[0<x_0 = \left(\dfrac{1-\theta_{\min}}{c}\right)^\frac{1}{\gamma}\le  \min\{1,x^*\}.\]
Indeed, it is clear that 
\begin{equation}\label{eq:boundarycondssol}
p^-(\bar{x}_0) = \theta_{\min}\le p_{\varepsilon}(\bar{x}_0),\quad 1 = p^-(0) = p_{\varepsilon}(0)\quad\textrm{ and }\quad p^{-}(x)\ge  0.
\end{equation}
On the other hand, recalling \eqref{eq:nodev} and assumption \textbf{(A2)}, we have  
\begin{equation*}
v_{\varepsilon}(x) = v^*(x,V_{\varepsilon}'(x))\le  (c')^{-1}(M^*x)~\le ~\frac{(M^*)^2}{\delta}\cdot x\qquad\forall x\in [0,x^*].
\end{equation*}
Thus, by \eqref{eq:steady} and Lemma \ref{lemma:Hamgrad}, we obtain that 
\begin{align*}
(r+\lambda)&-(r+\lambda+v_{\varepsilon}(x))p^-+ \displaystyle H_{\xi}(x,V_{\varepsilon}',p^-) p^-{}'+ \left(\varepsilon+\dfrac{(\sigma x)^2}{2}\right) p^-{}''\\
=&-v_{\varepsilon}(x)+(r+\lambda) c x^\gamma- H_\xi(x, V_{\varepsilon}', p^-)\,c \gamma x^{\gamma-1}+c\left(\varepsilon+\dfrac{(\sigma x)^2}{2}\right)\gamma (1-\gamma)x^{\gamma-2}\\
\geq&~(r+\lambda) c x^\gamma- H_\xi(x, V', p^-)\,c \gamma x^{\gamma-1}-\frac{(M^*)^2}{\delta}\cdot x\\
\geq&~\frac{r+\lambda}{2}\cdot cx^{\gamma}-\frac{(M^*)^2}{\delta}\cdot x\ge \frac{(r+\lambda)x^{\gamma}}{2}\cdot\left(c-\frac{2(M^*)^2}{(r+\lambda)\cdot \delta}\cdot \bar{x}_0^{1-\gamma}\right)\ge 0
\end{align*}
for all $x\in ]0,\bar{x}_{0}]$. Recalling \eqref{eq:boundarycondssol}, a standard comparison argument yields
\[p_{\varepsilon}(x)\ge p^-(x) = 1-cx^{\gamma}\qquad\forall x\in [0,\bar{x}_0].\]
and this implies  
\[p(x) = \lim_{\varepsilon_n\to 0+}~p_{\varepsilon_n}(x)\ge  1-cx^{\gamma}\qquad\forall x\in [0,\bar{x}_0].\]
Since $p(x)\in [0,1]$ for all $x\in[0,x^*]$, we conclude that  $\displaystyle\lim_{x\to 0^+}p(x)=1$.
\end{enumerate}
\end{proof}

\subsection{Dependence on the bankruptcy threshold $x^*$} 
In this subsection, we will study the behavior of the expected total optimal cost for servicing the debt when the maximum size $x^*$ of the DTI, at which bankruptcy is declared, 
becomes very large.  More precisely, for a given $x^*>0$,  let $(V(\cdot,x^*),p(\cdot, x^*))$ be a solution to the system of second order ODEs \eqref{eq:Sode} with boundary conditions 
\eqref{eq:parabolic-bd}. We investigate whether, as $x^*\to\infty$, the value function $V(\cdot,x^*)$ remains strictly positive or approaches zero uniformly on bounded sets.
\par\medskip\par
From the point of view of the model, the latter situation corresponds to a \emph{Ponzi's scheme}, where no actual effort is made by the borrower to repay the lenders,
and the debt and its interests are served just by borrowing more money from the investors.
\par\medskip\par
It turns out that a crucial role in the asymptotic behavior of $V(\cdot,x^*)$ as $x^*\to +\infty$ is played by the \emph{speed of decay} of the salvage rate $\theta(x^*)$
as $x^*\to +\infty$, which represents the fraction of the investment that can be recovered by the investors after the bankruptcy (and the unitary bond discounted price at the bankruptcy threshold). 
\par\medskip\par
If the salvage rate decay sufficiently slowly, i.e., the lenders can still recover a sufficiently high fraction of their investment after the bankruptcy, then the best choice for the borrower is to
implement the Ponzi's scheme. On the other hand, if the salvage rate $\theta(x^*)$ decays sufficiently fast, then Ponzi's scheme is no longer an optimal solution for the borrower.

\begin{theorem} 
Under the same assumptions as in Theorem \ref{thm:Exist-Stoch}, the followings hold:
\begin{itemize}
\item [(i)] if $\displaystyle\liminf_{s\to+\infty}\theta(s)s=+\infty$ then 
\begin{equation}\label{eq:stoch-ponzi}
\lim_{x^*\to\infty}V(x,x^*) = 0\qquad\forall x\in [0,\infty[
\end{equation}
\item [(ii)] if $\displaystyle\limsup_{s\to+\infty}\theta(s)s=C_1<+\infty$ then 
\begin{equation}\label{eq:stoch-nonponzi}
\liminf_{x^*\to\infty}V(x,x^*)\ge \dfrac{rB}{2(2r+v_{\max}-\mu)}
\end{equation}
for all $x\geq \dfrac{2\Big(r+v_{\max}+(1+C_1(r+v_{\max}))(r-\mu)\Big)}{r(r-\mu)}$.
\end{itemize}
\end{theorem}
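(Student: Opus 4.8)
The plan is to treat the two statements by constructing, for each regime, suitable sub/supersolutions of the stationary system \eqref{eq:steady} (equivalently of the limiting system \eqref{eq:Sode}), exploiting the comparison principle for the scalar parabolic equation governing $\mathbf V$ exactly as in Step 1 of the proof of Theorem \ref{thm:Exist-Stoch}. For part (i), I would fix $x\ge 0$ and a small $\eta>0$, and seek a supersolution $\overline V$ of the $V$-equation that is $\le\eta$ on $[0,x]$ whenever $x^*$ is large. A natural candidate is an affine-type profile $\overline V(y)=\alpha y$ (possibly with a correction near $x^*$ so that $\overline V(x^*)=B$ is respected), with $\alpha\to 0$ as $x^*\to\infty$. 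The point is that with no devaluation ($\overline V' y$ small forces $v^*=0$ by \eqref{eq:optfeedback-v}) and $u=0$, the Hamiltonian reduces to $H(y,\xi,p)=\bigl(\tfrac{\lambda+r}{p}-\lambda-\mu+\sigma^2\bigr)y\xi$, which is nonnegative when $\xi\ge 0$; I must check that the second-order term $\tfrac{\sigma^2y^2}{2}\overline V''$ does not spoil the supersolution inequality $r\overline V\ge H(y,\overline V',p)+\tfrac{\sigma^2y^2}{2}\overline V''$ — an affine profile has $\overline V''=0$ away from the correction, so this is where the hypothesis $\liminf_{s\to\infty}\theta(s)s=+\infty$ enters: it is precisely what allows the slope $\alpha=B/x^*$-type quantity to be reconciled with the boundary condition $p(x^*)=\theta(x^*)$ and still keep $\alpha\to0$. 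Concretely, the Ponzi trajectory $x(t)\equiv x_0$ (or slowly growing) can be made admissible with vanishing cost because the bond price stays bounded below; the slow decay of $\theta$ guarantees $p$ does not collapse, so $V(x,x^*)\le$ (running cost $\to0$) $+ e^{-rT^*_b}B$ with $T^*_b\to\infty$. I would make this rigorous by the supersolution comparison, reading off $\limsup_{x^*\to\infty}V(x,x^*)\le \eta$ for every $\eta>0$.

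For part (ii), the mechanism is reversed: I want a \emph{subsolution} $\underline V$ of the $V$-equation with $\underline V\ge \tfrac{rB}{2(2r+v_{\max}-\mu)}$ on the stated range of $x$, forcing $V(\cdot,x^*)$ to stay bounded away from $0$ there. The natural bound comes from the probabilistic representation \eqref{eq:borrowercost}: $V(x_0,x^*)\ge E[e^{-rT^*_b}B]\ge B\,E[e^{-rT^*_b}]$, so it suffices to bound $E[e^{-rT^*_b}]$ from below, i.e. to show that starting from $x_0$ not too close to $x^*$, the DTI cannot reach $x^*$ too quickly in expectation. The drift in \eqref{eq:debt-GDP-ev} is at most $\bigl(\tfrac{\lambda+r}{p}-\lambda-\mu+\sigma^2-v\bigr)x-\tfrac{u}{p}$; using $p\ge\theta_{\min}$ would be too crude, so instead I would use that near $x^*$ the price satisfies $p(x^*)=\theta(x^*)$ with $\theta(x^*)x^*\le C_1+o(1)$, hence $\tfrac{(\lambda+r)x}{p}$ is controlled by $\tfrac{(\lambda+r)C_1}{\theta(x^*)x^*/x}\approx (\lambda+r)C_1\cdot\tfrac{x}{x^*}$, which is bounded on $x\le$ const $\cdot x^*$. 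This yields a uniform (in $x^*$) linear-growth bound on the drift of $x(t)$, and the quantitative threshold $x\ge \frac{2(r+v_{\max}+(1+C_1(r+v_{\max}))(r-\mu))}{r(r-\mu)}$ is exactly the point where a Gronwall/exponential-moment estimate on $x(t)$ gives $E[e^{-rT^*_b}]\ge \tfrac{r}{2(2r+v_{\max}-\mu)}$. I would implement this via the elementary submartingale/optional-stopping argument: choose $\beta>0$ so that $e^{-rt}e^{\beta x(t)}$ is a supermartingale until $T^*_b$ (possible because $v\le v_{\max}$ controls the extra drift term and the generator inequality $r\ge \beta\cdot(\text{drift})+\tfrac12\beta^2\sigma^2 x^2$ holds on the relevant range), then $e^{\beta x_0}\ge E[e^{-rT^*_b}e^{\beta x^*}]$, and rearranging gives the lower bound on $V$.

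The main obstacle I anticipate is the interplay between the a priori bound $p\ge\theta_{\min}$ — which is $x^*$-dependent and degenerates to $0$ as $\theta(x^*)\to 0$ in regime (ii) — and the need for estimates that are \emph{uniform} in $x^*$. In part (ii) one cannot afford the crude lower bound on $p$; the argument must instead use the structural ODE for $p$ together with the boundary value $p(x^*)=\theta(x^*)$ and the hypothesis $\theta(s)s\le C_1+o(1)$ to get the sharp control on the term $\tfrac{(\lambda+r)x}{p(x)}$ appearing in the drift. Making the exponential-moment (Lyapunov) computation close with the precisely stated constants — i.e. verifying that the chosen $\beta$ satisfies the generator inequality on the whole interval $[x_0,x^*]$ and not merely near one endpoint — is the delicate bookkeeping step; everything else (comparison principle, optional stopping, passing $\varepsilon\to0$ as already done in Theorem \ref{thm:Exist-Stoch}) is routine. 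A secondary subtlety in part (i) is handling the boundary layer near $x^*$ where the supersolution must climb from $\approx\eta$ to $B$: one must confine this layer to a shrinking interval and check the supersolution inequality there, which again uses $\theta(x^*)x^*\to\infty$ to keep the layer's contribution negligible on the fixed compact set $[0,x]$.
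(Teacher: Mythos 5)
Your overall strategy (sub/supersolution comparison for the $V$-equation) matches the paper, but both concrete constructions you propose have gaps.

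\textbf{Part (i).} An affine profile $\overline V(y)=\alpha y$ cannot be a supersolution: with $\overline V''=0$ the inequality $r\overline V\ge H(y,\overline V',p)+\tfrac{\sigma^2y^2}{2}\overline V''$ reduces to $r\ge \tfrac{\lambda+r}{p}-\lambda-\mu+\sigma^2$, and since $p\le 1$ the right side is already $\ge r-\mu+\sigma^2$, which exceeds $r$ whenever $\sigma^2>\mu$. The paper instead takes a strictly concave power-law supersolution, $V_1(x)=B(x/x^*)^{r/(\beta+\sigma^2)}$ with $\beta=r+\lambda+v_{\max}$; the term $\tfrac{\sigma^2 x^2}{2}V_1''<0$ is what makes the supersolution inequality close. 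The hypothesis $\liminf\theta(s)s=\infty$ does not enter to ``reconcile'' an affine slope; it enters through the $p$-equation. When $\theta(x^*)<\gamma:=\tfrac{r+\lambda}{r+\lambda+v_{\max}}$, the crude lower bound $p\ge\theta_{\min}$ degenerates and one must construct a \emph{subsolution} $p_1$ of the $p$-equation and a coupled supersolution $\tilde V_1$ solving a first-order ODE system in $(V_1,p_1)$; the bound $V(x,x^*)\le B\bigl(x/(\theta(x^*)x^*)\bigr)^{r/(\beta+\sigma^2)}$ then follows and vanishes precisely because $\theta(x^*)x^*\to\infty$. Your one-equation ansatz misses this coupling.

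\textbf{Part (ii).} The probabilistic route does not work as you describe it, and the direction of the inequalities is wrong. First, the lower bound $V(x_0,x^*)\ge BE[e^{-rT^*_b}]$ holds along any \emph{fixed} admissible trajectory but is useless after taking infimum: the borrower can choose a constant strategy that makes $T^*_b=+\infty$ a.s., so $\inf_{\text{strategies}}E[e^{-rT^*_b}]=0$. A nontrivial lower bound on $V$ must account for the running cost, not only the terminal cost. Second, the supermartingale inequality you propose, $e^{\beta x_0}\ge E[e^{-rT^*_b}e^{\beta x^*}]$, rearranges to $E[e^{-rT^*_b}]\le e^{\beta(x_0-x^*)}$, which is an \emph{upper} bound that vanishes as $x^*\to\infty$ — the opposite of what you need. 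The paper's mechanism is instead to construct an explicit supersolution $p_2$ of the $p$-equation (equal to $1$ on $[0,\bar x_2]$ and to $\bar x_2/x$ beyond, with $\bar x_2=\tfrac{1}{r-\mu}+\theta(x^*)x^*$, which remains bounded in regime (ii)); this yields $p(x,x^*)\le p_2(x)$, and then a piecewise subsolution $V_2$ of the $V$-equation — zero on $[0,\bar x_3]$ and equal to $[\bar x_2/\bar x_3-p_2(x)]B$ beyond — produces the constant $\tfrac{rB}{2(2r+v_{\max}-\mu)}$ and the stated threshold. Note the monotonicity of $H$ in $p$ (Lemma~\ref{lemma:Hamgrad}) is what lets the upper bound on $p$ be substituted into the subsolution check for $V_2$; without that step you cannot decouple the two equations.
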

\par\medskip\par
\begin{proof}
For any fixed $x^*>0$, let $(V(\cdot,x^*),p(\cdot,x^*))$ be a solution to \eqref{eq:Sode} with boundary conditions \eqref{eq:parabolic-bd} and set 
\[
v(x,x^*)\doteq \displaystyle\underset{\omega\geq 0}{\mathrm{argmin}}\{c(\omega)-\omega xV'(x,x^*)\}.
\]
With the same argument of the proof of Theorem \ref{thm:Exist-Stoch}, we obtain that 
\[(V(x,x^*),p(x,x^*))\in [0,B]\times[\theta_{\min},\theta(x^*)],\] 
and $V(\cdot,x^*)$ is increasing. Moreover, there exists $M^*>0$  depending on $x^*$ such that $V'(x,x^*)\leq M^*$ for all $x\in [0,x^*]$ and 
\begin{equation}\label{eq:nondev-asy}
v(x,x^*) = 0\qquad\forall x\in \left[0,\bar{x}_0\doteq \dfrac{c'(0)}{M^*}\right]\,.
\end{equation}
In order to achieve $(i)$ and $(ii)$, we construct for upper and lower bounds for $(V(\cdot, x^*)$, $p(\cdot, x^*))$, in the following form
\begin{align*}
V_2(x)\leq V(x, x^*)\le V_1(x)\textrm{ and }p_1(x)\le p(x, x^*)\le p_2(x)
\end{align*}
where
\begin{itemize}
\item 
the function $V_1(\cdot)$ and $V_2(\cdot)$ are a supersolution and a subsolution of the first equation in \eqref{eq:Sode}, respectively.  
\item 
the functions $p_1(\cdot)$ and $p_2(\cdot)$ are a subsolution and a supersolution of the second equation in \eqref{eq:Sode}, respectively.    
\end{itemize}
To this aim, we introduce two constants
\begin{align*}
\gamma\doteq \dfrac{r+\lambda}{r+\lambda+v_{\max}}\qquad\textrm{ and }\qquad\beta\doteq  r+\lambda+v_{\max}.
\end{align*}
\par\medskip\par
\noindent \textbf{Step 1.} We first prove $(i)$. Suppose that 
\begin{equation}\label{eq:asp0}
\liminf_{s\to+\infty}\theta(s)s = +\infty\,.
\end{equation}
We shall construct a suitable pair of functions $(V_1(\cdot),p_1(\cdot))$. Two cases are considered:
\par\medskip\par
\begin{itemize}
\item If $\theta(x^*)\geq \gamma$ then let $V_1(\cdot)$  be the solution to the backward Cauchy problem 
\begin{align*}
rV_1(x)=\left(\beta+\sigma^2\right)xV_1'(x)\qquad\mathrm{with}\qquad V_1(x^*)=B.
\end{align*}
Solving the above ODE, we obtain that 
\begin{align*}
V_1(x)=B\cdot \left(\dfrac{x}{x^*}\right)^{\frac{r}{\beta+\sigma^2}}\qquad\textrm{ for all }x\in [0,x^*]. 
\end{align*}
Since $0<\dfrac{r}{\beta+\sigma^2}<1$, it holds that  $V_1'(x)>0$ and $V_1''(x)<0$ for all $x\in ]0,x^*[.$
Thus, from Lemma \ref{lemma:Hprop} (1), for all $q\geq \gamma$ it holds 
\begin{align*}
-rV_1(x)+H(x,V_1'(x),q)+\dfrac{\sigma^2x^2}{2}\cdot V_1''(x)~\leq&~-rV_1(x)+\left(\dfrac{r+\lambda}{\gamma}+\sigma^2\right)xV_1'(x)\\
=&~-rV_1(x)+\left(\beta+\sigma^2\right)xV_1'(x)=0\,.
\end{align*}
In particular, since from Theorem \ref{thm:Exist-Stoch} and \eqref{eq:theta-min}, it holds 
\[p(x,x^*)\ge \theta_{\min} = \min\left\{\theta(x^*),\dfrac{r+\lambda}{r+\lambda+v_{\max}}\right\} = \gamma,\]
we have
\[
-rV_1(x)+H(x,V_1',p(x,x^*))+\dfrac{\sigma^2x^2}{2}\cdot V_1''(x)\le 0\,.
\]
Thus $V_1$ is a super-solution of the first equation in \eqref{eq:Sode}. A standard comparison arguments yields
\begin{equation}\label{eq:upperbound-1-V}
V(x,x^*)\le V_1(x) = B\cdot \left(\dfrac{x}{x^*}\right)^{\frac{r}{\beta+\sigma^2}}\qquad\forall x\in [0,x^*],
\end{equation}
which proves $(i)$ in this case.
\par\medskip\par
\item If $\theta(x^*)<\gamma$ then let $(\tilde{V}_1(\cdot),\tilde{p}_1(\cdot))$ be the  solution to the 
backward Cauchy problem
\begin{align}\label{eq:pV-upper}
\begin{cases}
r\tilde{V}_1(x)&=~\displaystyle \Big(\frac{\lambda+r}{\tilde{p}_1(x)}+\sigma^2\Big)x \tilde{V}_1'(x)\,, \\[6mm]
\beta\cdot (\tilde{p}_1(x)-\gamma)&=~\displaystyle\Big(\frac{\lambda+r}{\tilde{p}_1(x)}+\sigma^2\Big) x \tilde{p}'_1(x)\,,
\end{cases}
&&\begin{cases}
\tilde{V}_1(x^*)&=~ B, \\[6mm]
\tilde{p}_1(x^*)&=~\theta(x^*).
\end{cases}
\end{align}
We have that $\tilde p_1(\cdot)$ is strictly decreasing and for $x\ne 0$ it solves the implicit equation
\begin{align*}
\tilde{p}_1(x) = \dfrac{\theta(x^*)x^*}{x}\cdot \left(\dfrac{\gamma-\tilde{p}_1(x)}{\gamma-\theta(x^*)}\right)^{1+\frac{\sigma^2}{\beta}}\qquad\textrm{ with }\qquad
\lim_{x\to 0^+}\tilde{p}_1(x)=\gamma,
\end{align*}
while $\tilde{V}_1(\cdot)$ is increasing and it can be expressed in terms of $\tilde p_1(\cdot)$ as follows
\begin{align}\label{eq:vt1}
\tilde{V}_1(x) = B\cdot \left(\dfrac{\tilde{p}_1(x)\cdot x}{\theta(x^*)\cdot x^*}\right)^{\frac{r}{\beta+\sigma^2}}\le B\cdot  \left(\dfrac{x}{\theta(x^*)x^*}\right)^{\frac{r}{\beta+\sigma^2}}
\end{align}
for all $x\in [0,x^*]$. Using \eqref{eq:pV-upper} and the implicit expression of $\tilde p_1(\cdot)$, we obtain
\begin{align*}
-1\displaystyle=&\tilde{p}'_1(x)\cdot\left[ \frac{x}{\tilde{p}_1(x)}+\left(1+\frac{\sigma^2}{\beta}\right)\cdot\frac{x}{\gamma-\tilde{p}_1(x)}\right]\\
=&\tilde{p}'_1(x)\cdot\left[ \frac{x}{\tilde{p}_1(x)}+\left(1+\frac{\sigma^2}{\beta}\right)\cdot\left(\dfrac{[\theta(x^*)x^*]^{\frac{\beta}{\beta+\sigma^2}}}{\gamma-\theta(x^*)}\right)\cdot \dfrac{x^{\frac{\sigma^2}{\beta+\sigma^2}}}{\tilde{p}_1(x)^{\frac{\beta}{\beta+\sigma^2}}}\right]\,.\\
\end{align*}
Since $\tilde{p}_1$ is strictly decreasing, we have $\tilde{p}_1'<0$, and from the above expression it follows that $\tilde{p}_1'(x)$ is increasing, and so $\tilde{p}_1''(x)>0$ for all $x\in ]0,x^*[$. 
Hence, from Lemma \ref{lemma:Hprop} (1), it holds
\begin{multline}\label{eq:sub-p1-first}
(r+\lambda)-(r+\lambda+v(x,x^*))\tilde{p}_1+H_{\xi}(x,V'(x,x^*),\tilde{p}_1)\tilde{p}_1'+\dfrac{\sigma^2x^2}{2}\tilde{p}_1''\\
\geq~(r+\lambda)- (r+\lambda+v_{\max})\tilde{p}_1+\left(\frac{\lambda+r}{\tilde{p}_1}+\sigma^2\right)x\tilde{p}_1'+\frac{\sigma^2x^2}{2}\tilde{p}_1''\\
 = \beta\cdot (\gamma-\tilde{p}_1)+\left(\frac{\lambda+r}{\tilde{p}_1}+\sigma^2\right)x\tilde{p}_1'+\frac{\sigma^2x^2}{2}\tilde{p}_1'' = \frac{\sigma^2x^2}{2}\tilde{p}_1''>0
\end{multline}
for all $x\in ]0,x^*[$.
On the other hand, recalling $\bar{x}_0=\dfrac{c'(0)}{M^*}$ in \eqref{eq:nondev-asy}, let $\bar{p}_1(\cdot)$ be the solution of the backward Cauchy problem 
\begin{align*}(r+\lambda)(\bar{p}_1(x)-1)=\displaystyle\left(\frac{\lambda+r}{\bar{p}_1(x)}+\sigma^2\right) x \bar{p}'_1(x)\qquad\mathrm{with}\qquad \bar{p}_1(\bar{x}_0)=\tilde{p}_1(\bar{x}_0)\,.\end{align*}
It is clear that
\[
\bar{p}_1(x)\ge \tilde{p}_1(x)\qquad\forall x\in [0,\bar{x}_0].
\] 

Arguing as in the case of $\tilde p_1(\cdot)$, we have that $\bar{p}_1$ is decreasing, $\displaystyle\lim_{x\to 0+}\bar{p}_1(x)=1$, and 
\begin{align*}\bar{p}_1(x)=\dfrac{\tilde{p}_1(\bar{x}_0)\bar{x}_0}{x}\cdot \left(\dfrac{1-\bar{p}_1(x)}{1-\bar{p}_1(\bar{x}_0)}\right)^{\frac{\sigma^2+\lambda+r}{\lambda+r}}\qquad\textrm{ for all }x\in [0,\bar{x}_0].\end{align*}
Moreover, $\bar{p}_1''(x)>0$ and 
\begin{equation}\label{eq:sub-p1-second}
(r+\lambda)(1-\bar{p}_1)+H_{\xi}(x,V'(x,x^*),\bar{p}_1)\bar{p}_1'+\frac{\sigma^2x^2}{2}\bar{p}_1''>0\qquad\forall x\in ]0,\bar{x}_0[.
\end{equation}
Let $p_1:[0,x^*]\to \mathbb R$ be such that
\begin{equation*}
p_1(x)=\begin{cases}
\bar{p}_1(x),&\textrm{ for all }x\in [0,\bar{x}_0]\,, \\[4mm]
\tilde{p}_1(x),&\textrm{ for all }x\in [\bar{x}_0,x^*]\,,\end{cases}
\end{equation*}
we have 
\begin{align*}p_1(0) = p(0,x^*) = 1\qquad\textrm{and}\qquad p_1(x^*) = p(x^*,x^*) = \theta(x^*).\end{align*}
Recalling that $v(x,x^*)=0$ for all $x\in [0,x_0]$, \eqref{eq:sub-p1-first} and \eqref{eq:sub-p1-second} imply that 
\[
(r+\lambda)-(r+\lambda+v(x,x^*))p_1+H_{\xi}(x,V'(x,x^*),p_1)p_1'+\dfrac{\sigma^2x^2}{2}p_1''>0
\]
for all $x\in ]0,x_0[\cup ]x_0,x^*[$. Thus, $p_1(\cdot)$ is a sub-solution of the second equation in \eqref{eq:Sode} and a standard comparison arguments yields
\begin{equation}\label{eq:upperbound-p}
p(x,x^*)\ge p_1(x)\ge \tilde{p}_1(x)\qquad\forall x\in [0,x^*]\,.
\end{equation}
To complete this step, we define 
\begin{equation*}
V_1(x)\doteq \begin{cases}
\tilde{V}_1(\bar{x}_1),&\textrm{ for } x\in [0,\bar{x}_1]\\[4mm]
\tilde{V}_1(x),&\textrm{ for } x\in [\bar{x}_1,x^*]\end{cases}
\quad\mathrm{with}\quad \bar{x}_1\doteq \min\left\{\dfrac{1}{r+\lambda},\bar{x}_0,x^*\right\}
\end{equation*}
For any $x\in ]0,\bar{x}_1[$, it holds that $V_1(x)=\tilde{V}_1(\bar{x}_1)$ and thus
\begin{equation}\label{eq:cc1}
-rV_1(x)+H(x,V_1'(x),p(x,x^*))+\frac{\sigma^2x^2}{2}V_1''(x) = -r\tilde{V}_1(x_1)<0\,.
\end{equation}
On the other hand, from Lemma \ref{lemma:Hamgrad}, the map $p\mapsto H(x,\xi,p)$ is monotone decreasing when $\xi\geq 0$ and $x\geq \frac{1}{\lambda+r}\geq \bar{x}_1$. 
Thus, recalling \eqref{eq:upperbound-p}, \eqref{eq:pV-upper}, and Lemma \ref{lemma:Hprop} (1), we have that  all $x\in [\bar{x}_1,x^*]$, it holds
\begin{eqnarray*}
-rV_1(x)+H(x,V_1'(x),p(x,x^*))&\leq&-rV_1(x)+H(x,V_1'(x),\tilde{p}_1(x))\\
&=&-r\tilde{V}_1(x)+H(x,\tilde{V}_1'(x),\tilde{p}_1(x))\\
&\leq&-r\tilde{V}_1(x)+\left(\dfrac{r+\lambda}{\tilde{p}_1(x)}+\sigma^2\right)x\tilde{V}_1'(x) = 0.
\end{eqnarray*}
Differentiating both sides of the first ODE in \eqref{eq:pV-upper}, we obtain
\[
\left(r-\sigma^2-\frac{\lambda+r}{\tilde{p}_1(x)} +\frac{(\lambda+r)\tilde{p}_1'(x)}{\tilde{p}_1^2(x)}x\right)\cdot \tilde{V}_1'(x) = \left(\frac{\lambda+r}{\tilde{p}_1(x)}+\sigma^2\right)x \tilde{V}_1''(x)
\]
and it yields
\[\tilde{V}_1''(x)<0\qquad\forall x\in \,]0,x^*[\,.\]
Hence, for all $x\in [\bar{x_1},x^*[$, it holds
\[
-rV_1(x)+H(x,V_1'(x),p(x,x^*))+\frac{\sigma^2x^2}{2}V_1''(x)\le \frac{\sigma^2x^2}{2}V_1''(x) = \frac{\sigma^2x^2}{2}\tilde{V}_1''(x)<0
\]
Hence, $V_1$ is a super-solution of the first equation in \eqref{eq:Sode} and a standard comparison arguments yields
\begin{align*}
V(x,x^*)\le V_1(x) = \tilde{V}_1(x)\quad\textrm{ for all }x\in [\bar{x}_1,x^*].
\end{align*}
From \eqref{eq:asp0} and \eqref{eq:vt1}, we obtain that 
\[
\limsup_{x^*\to\infty}~V(x,x^*)\le \limsup_{x^*\to\infty}~B\cdot  \left(\dfrac{x}{\theta(x^*)x^*}\right)^{\frac{r}{\beta+\sigma^2}} = 0\quad\forall x\in [\bar{x}_1,x^*],
\]
and the monotone increasing of $V(\cdot,x^*)$ yields \eqref{eq:stoch-ponzi}.

\end{itemize}

\par\medskip\par

\noindent \textbf{Step 2.} We now prove (ii). Suppose that 
\begin{equation}\label{eq:asp1}
\limsup_{s\to+\infty}\theta(s)s = C_1<+\infty\,.
\end{equation}
We introduce an intermediate point 
\begin{equation}\label{eq:x2}
\bar{x}_2\doteq \dfrac{1}{r-\mu}+\theta(x^*)x^*
\end{equation}
which depends on $x^*$. Define
\[p_2(x) = \begin{cases} 1,&\textrm{ for }x\in [0,\bar{x}_2],\\[4mm]
\displaystyle \dfrac{\bar{x}_2}{x}=\left(\dfrac{1}{r-\mu}+\theta(x^*)x^*\right)\cdot \dfrac{1}{x},&\textrm{ for }x\in [\bar{x}_2,x^*],
\end{cases}\]
we have
\begin{align*}
 p_2(0) = 1 = p(0,x^*),\qquad p_2(x^*) = \dfrac{1}{(r-\mu)x^*}+\theta(x^*)>p(x,x^*),
\end{align*}
and
\begin{align*}
p'_2(x) = -\dfrac{p_2(x)}{x},\qquad p''_2(x) = \dfrac{2p_2(x)}{x^2}\qquad\forall x\in  [\bar{x}_2,x^*].
\end{align*}
For $x\in ]0,\bar{x}_2[$, it holds
\begin{multline*}
(r+\lambda)-(r+\lambda+v(x,x^*))p_2(x)+H_{\xi}(x,V'(x,x^*),p_2(x))\cdot p'_2(x)+\dfrac{\sigma^2 x^2}{2} p_2''(x)\\  = -v(x,x^*)p_2(x) = -v(x,x^*)<0\,.
\end{multline*}
On the other hand, recalling Lemma \ref{lemma:Hamgrad}, we have
\begin{multline*}
(r+\lambda)-(r+\lambda+v(x,x^*)) p_2(x)+H_{\xi}(x,V'(x,x^*), p_2(x))\cdot p'_2(x)+\dfrac{\sigma^2 x^2}{2} p_2''(x)\\
 = \dfrac{{u^*(V'(x,x^*),p_2(x))}}{x}-(r-\mu)\cdot p_2(x)\le \dfrac{1}{x}-(r-\mu)\cdot p_2(x)<0
\end{multline*}
for all $x\in ]\bar{x}_2,x^*[$. Therefore, $ p_2(\cdot)$ is a super-solution of the first equation in \eqref{eq:Sode} and 
\begin{align}\label{eq:p2-b}
p(x,x^*)\le  p_2(x)\qquad\textrm{ for all }x\in [0,x^*]\,.
\end{align}
To construct $V_2$, we define
\begin{equation}\label{eq:x3}
\bar{x}_3~:=~\dfrac{1+(r+v_{\max})\cdot \bar{x}_2}{r}>\dfrac{1}{r+\lambda}.
\end{equation}
Notice that for $x^*$ sufficiently large, it holds $\bar{x}_3<x^*$. In this case, we set
\[
V_2(x) = \begin{cases}
0,&\textrm{ for }x\in [0,\bar{x}_3],\\[4mm]
\left[\dfrac{\bar{x}_2}{\bar{x}_3} - p_2(x)\right]\cdot B = B\bar{x}_2\cdot\left(\dfrac{1}{\bar{x}_3}-\dfrac{1}{x}\right),&\textrm{ for }x\in [\bar{x}_3,x^*].\end{cases}
\]
For every $x\in ]0,\bar{x}_3[$, it holds
\begin{equation}\label{eq:ine1}
-rV_2+H(x,V'_2,p(x,x^*))+\frac{\sigma^2x^2}{2}V_2'' = H(x,0,p(x,x^*)) = 0\,.
\end{equation}
For every $x\in ]\bar{x}_3,x^*[$, we have
\[
V_2'(x) = B\cdot \dfrac{p_2(x)}{x}>0,\qquad V''_2(x) = -2B\cdot \frac{p_2(x)}{x^2}<0\,.
\]
Thus, recalling \eqref{eq:x3} and Lemma \ref{lemma:Hprop} (1), we estimate 
\begin{eqnarray*}
-rV_2+H(x,V_2',p_2)&\geq&-rV_2+\left(\frac{(\lambda+r) x-1}{p_2}+(\sigma^2-\lambda-\mu-v_{\max})x\right)V'_2\\
&=&-rV_2+B\cdot \left(\lambda+r-\frac{1}{x}+(\sigma^2-\lambda-\mu-v_{\max})\cdot p_2\right)\\
&\geq&-rV_2+B\cdot \left(r-\frac{1}{x}+(\sigma^2-\mu-v_{\max})\cdot p_2\right)\\
&=&B\cdot \left(r-\frac{1}{x}-\frac{r\bar{x}_2}{\bar{x}_3}+(r-\mu-v_{\max})\cdot p_2\right)+B\sigma^2p_2\\
&\geq&B\cdot \left(r-\frac{1}{x}-\frac{r\bar{x}_2}{\bar{x}_3}-v_{\max}\cdot p_2\right)-\frac{\sigma^2x^2}{2}\cdot V''_2\\
&\geq&B\cdot\frac{{r\bar{x}_3}-(1+(r+v_{\max})\cdot \bar{x}_2)}{\bar{x}_3}-\frac{\sigma^2}{2}\cdot V''_2 = -\frac{\sigma^2x^2}{2}\cdot V''_2
\end{eqnarray*}
and it yields 
\[-rV_2+H(x,V_2',p_2)+\frac{\sigma^2x^2}{2}\cdot V''_2\ge 0\qquad\forall x\in [\bar{x}_3,x^*].\]

From Lemma \ref{lemma:Hamgrad} and \eqref{eq:x3}, the map $p\to H(x,V'_2(x),p)$ is monotone decreasing  on $[0,1]$, for all $x\in  [\bar{x}_3,x^*]$. Recalling \eqref{eq:p2-b}, we get
\[-rV_2+H(x,V_2',p(x,x^*))+\frac{\sigma^2x^2}{2} V_2''\ge 0\qquad\forall x\in ]\bar{x}_3,x^*[.\]
Since $V_2(0)=V(x,x^*)$ and $V_2(x^*)<B=V(x,x^*)$, together with \eqref{eq:ine1}, the function $V_2$ is a sub-solution of the first equation in \eqref{eq:Sode} and 
\begin{align*}
V(x,x^*)\ge& V_2(x) =  B\bar{x}_2~\cdot~ \left[\dfrac{1}{\bar{x}_3}-\dfrac{1}{x}\right]\ge \frac{B\bar{x}_2}{2\bar{x}_3}\\
=&\frac{rB}{2}\cdot \frac{1}{r+v_{\max}+\frac{1}{\bar{x}_2}}\ge \frac{rB}{2(2r+v_{\max}-\mu)}
\end{align*}
for all  $x\in [2\bar{x}_3,x^*]$. Finally, recalling \eqref{eq:asp1}, \eqref{eq:x2}, and \eqref{eq:x3}, we have  
\[\limsup_{x^*\to\infty}\bar{x}_3 = \frac{(r+v_{\max}+(1+C_1(r+v_{\max}))(r-\mu))}{r(r-\mu)}\]
and it yields $(ii)$.
\end{proof}

\section{The deterministic case $\sigma=0$}\label{sec:det-model}
In the case $\sigma=0$, the stochastic control system \eqref{eq:debt-GDP-ev} reduces to the deterministic one
\begin{equation}\label{eq:dode1}
\dot{x}(t)=\left(\dfrac{\lambda+r}{ p(t)} -\lambda  -\mu - v(t)\right) x(t) - \dfrac{u(t)}{ p(t)}.
\end{equation}
Here the control $u(t)$ is assumed to be in $[0,1]$ for all $t\geq 0$. The Debt Management Problem can be formulated as follows.\par\medskip\par
\begin{itemize}
\item [\textbf{(DMP)}]\emph{Given an initial value $x(0)=x_0\in [0, x^*]$ of the DTI, minimize
\begin{equation}\label{eq:cost-det}
\int_0^{T_b} e^{-rt} [L(u(t))+c(v(t))]\, dt + e^{-rT_b} B,
\end{equation}
subject to the dynamics \eqref{eq:dode1},
where the bankruptcy time $T_b$ is defined as in \eqref{eq:bankruptcy-time},
while the discount bond price 
\begin{multline}\label{eq:covec-det}
p(t)=\int_t^{T_b}(r+\lambda)\exp\left\{-\int_0^s \bigl(\lambda +r+v(\tau)\bigr)\,d\tau\right\}\,ds+\\
+\exp\left\{-\int^{T_b}_{t}(r+\lambda+v(s))\,ds\right\}\theta(x^*).
\end{multline}}
\end{itemize}
As in the stochastic case, we will assume that \textbf{(A1)}-\textbf{(A2)} hold. 
Since in this case the optimal feedback control $u^*,v^*$ and the corresponding functions $V^*,p^*$ may be
nonsmooth, a concept of equilibrium solution should be more carefully defined.
\begin{definition}[\textbf{Equilibrium solution in feedback form}]\label{def:eqsol} 
\emph{A couple of piecewise Lipschitz continuous  functions $(u^*(\cdot),v^*(\cdot))$ and l.s.c. 
$p^*(\cdot)$ provide an  equilibrium solution to the debt management problem (DMP), with 
continuous value function $V^*(\cdot)$, if
\begin{itemize}
\item[(i)] For every $x_0\in [0,x^*]$, $V^*$ is the minimum cost for the optimal control problem 
\begin{equation}\label{eq:cost-feed}
\textrm{ minimize: }\int_0^{T_b} e^{-rt}[L(u(t))+c(v(t))]\, dt + e^{-r T_b} B,
\end{equation}
subject to 
\begin{equation}\label{eq:cont-feed}
\dot{x}(t)=\left(\dfrac{\lambda+r}{p^*(x(t))}-\lambda-\mu-v(t)\right)x(t)-\dfrac{u(t)}{p^*(x(t))},\hspace{1cm} x(0)=x_0,
\end{equation}
where $u:[0,+\infty[\to [0,1]$ and $v:[0,+\infty[\to [0,+\infty[$ are measurable functions.
Moreover, every Carath\'eodory solution of \eqref{eq:cont-feed} with $(u(t),v(t))= (u^*(x(t)),v^*(x(t)))$ is optimal.
\item[(ii)] For every $x_0\in [0,x^*]$, there exists at least one solution $t\mapsto x(t)$ of the Cauchy problem
\begin{equation}
\dot{x}(t)=\left(\dfrac{\lambda+r}{p^*(x(t))} -\lambda -\mu-v^*(x(t))\right) x(t) - \dfrac{u^*(x(t))}{p^*(x(t))}\,,\qquad\qquad x(0)=x_0,
\end{equation}
such that
\begin{multline}
p^*(x_0)=\int_0^{T_b}(r+\lambda)\exp\left\{-\int_0^t \bigl(\lambda +r+v^*(x(s)\bigr)\,ds\right\}\,dt+\\
+\exp\left\{-\int^{T_b}_{0}(r+\lambda+v^*(x(t)))\,dt\right\}\theta(x^*),\end{multline}
with $T_b$ as in \eqref{eq:bankruptcy-time}.
\end{itemize}}
\end{definition}

\subsection{System of first order Hamilton-Jacobi equations} 
In the deterministic case, the Debt Management Problem \eqref{eq:dode1}-\eqref{eq:covec-det} leads to the following implicit system of first order ODEs
\begin{equation}\label{eq:Sdode}
\begin{cases}
rV(x)=H(x,V'(x),p(x))\,,\\[4mm]
(r+\lambda+v(x))p(x)-(r+\lambda)=H_{\xi}(x,V'(x),p(x))\cdot p'(x)\,,\\[6mm]
v(x)=\displaystyle\underset{\omega\geq 0}{\mathrm{argmin}}\{c(\omega)-\omega xV'(x)\}\,,
\end{cases}\end{equation}
with the boundary conditions
\begin{align}\label{eq:bdc}
\begin{cases}V(0)=0,\\ \\ V(x^*)=B,\end{cases}&&
\begin{cases}p(0)=1,\\ \\ p(x^*)=\theta(x^*).\end{cases}
\end{align}
The Hamiltonian function \eqref{eq:Hamiltonian} reduces to
\begin{multline}\label{eq:detHamil}
H(x,\xi,p)\doteq\min_{u\in [0,1]}\left\{L(u)-u\,\dfrac{\xi}{p}\right\}+\min_{v\geq 0}\Big\{c(v)-vx\xi\Big\}
+ \left(\dfrac{\lambda+r}{p}-\lambda-\mu\right)x\,\xi.
\end{multline}
Therefore, we have (see Appendix \ref{app:convex} for the notation)
\begin{equation}\label{eq:detHamil-2}
-H(x,\xi,p)\doteq L^\circ\left(\dfrac{\xi}{p}\right)+c^\circ(x\xi)-\left(\dfrac{\lambda+r}{p}-\lambda-\mu\right)x\,\xi,
\end{equation}
and we notice that for fixed $x>0$, $p\in ]0,1]$ the map $\xi\mapsto -H(x,\xi,p)$ is convex and lower semicontinuous.
Here $L^\circ,c^\circ$ are  the convex conjugate of $L$ and $c$.

Given $x>0$, $p\in ]0,1]$, $\xi\ge 0$, 
we denote by $u^*(\xi,p)\in [0,1]$ and $v^*(x,\xi)\in [0,+\infty[$ the unique elements of 
$\partial L^\circ\left(\dfrac{\xi}{p}\right)$ and $\partial c^\circ(x\xi)$, respectively, provided by Lemma \ref{lemma:subdiffcost}.
\begin{align*}
u^*(\xi,p)&\doteq~\underset{u\in[0,1]}{\mathrm{argmin}}\left\{L(u) - u\,\dfrac{\xi}{p}\right\}=\begin{cases}0,&\textrm{ if }0\le \xi< pL'(0),\\ (L')^{-1}(\xi/p),&\textrm{ if }\xi\ge pL'(0)>0,\end{cases}\\
v^*(x,\xi)&\doteq~\underset{v\geq 0}{\mathrm{argmin}}\Big\{c(v)-vx\xi\Big\}=\begin{cases}0,&\textrm{ if }0\le x\xi< c'(0),\\ (c')^{-1}(x\xi),&\textrm{ if }x\xi\ge c'(0)>0.\end{cases}
\end{align*}
In particular, 
\begin{itemize}
\item for every $p\in]0,1]$ the map $\xi\mapsto u^*(\xi,p)$ is strictly increasing in $[pL'(0),+\infty[$, and $u^*(\cdot, p)\equiv 0$ in $[0,pL'(0)]$; 
\item for every $\xi\ge 0$ the map $p\mapsto u^*(\xi,p)$ is strictly decreasing in $[\xi/L'(0),1]$, and $u^*(\xi, \cdot)\equiv 0$ in $[0,\xi/L'(0)]$; 
\item for every $\xi>0$ the map $x\mapsto v^*(x,\xi)$ is strictly increasing in $[c'(0)/\xi,+\infty[$, and $v^*(\cdot,\xi)\equiv 0$ in $[0,c'(0)/\xi]$;
\item for every $x>0$ the map $\xi\mapsto v^*(x,\xi)$ is strictly increasing in $[c'(0)/x,+\infty[$, and $v^*(x,\cdot)\equiv 0$ in $[0,c'(0)/x]$.
\end{itemize}
\noindent It is proved in  Lemma \ref{lemma:Hamgrad} that the gradient of the Hamiltonian function $H(\cdot)$ 
at points $(x,\xi,p)\in [0,+\infty[\times[0,+\infty[\times]0,1]$ can be expressed in terms of 
$u^*(\xi,p)$ and $v^*(x,\xi)$ by 
\begin{equation}\label{eq:gradH}\begin{cases}
\displaystyle H_{x}(x,\xi,p)=& \displaystyle \Big[(\lambda+r)-p(\lambda +\mu +v^*(x,\xi))\Big]\cdot \frac{\xi}{p},\\
\displaystyle H_{\xi}(x,\xi,p)=& \displaystyle \frac{1}{p}\cdot \Big[x\big((\lambda+r)-p(\lambda +\mu +v^*(x,\xi))\big)-u^*(\xi,p)\Big],\\
\displaystyle H_{p}(x,\xi,p)=& \displaystyle (u^*(\xi,p)-x(\lambda +r))\cdot \frac{\xi}{p^2}.
\end{cases}
\end{equation}
The following Lemma will catch some relevant properties of $H(\cdot)$ needed to study the system \eqref{eq:Sdode}.
\par\medskip\par
\begin{lemma}\label{lemma:nonincmax}
Let $x\geq 0$ and $0<p\le 1$ be fixed, and set
\[H^{\max}(x,p)\doteq \max_{\xi\geq 0}H(x,\xi,p).\] 
Then 
\begin{enumerate}
\item there exists $\xi^\sharp(x,p)>0$ such that, given $\eta>0$, the equation $r\eta=H(x,\xi,p)$ admits
\begin{itemize}
\item no solutions $\xi\in[0,+\infty)$ if $r\eta> H^{\max}(x,p)$,
\item $\xi^{\sharp}(x,p)$ as unique solution  if $r\eta=H^{\max}(x,p)$, 
\item exactly two distinct solutions $\{F^{-}(x,\eta,p),F^{+}(x,\eta,p)\}$ 
with \[0<F^{-}(x,\eta,p)<\xi^{\sharp}(x,p)<F^{+}(x,\eta,p)\] if $0<r\eta<H^{\max}(x,p)$,
\end{itemize}
\par\medskip\par
\item we extend the definition of $\eta\mapsto F^{\pm}(x,\eta,p)$ by setting 
\[F^{\pm}\left(x,\dfrac{1}{r}\,H^{\max}(x,p),p\right)=\xi^{\sharp}(x,p),\]
thus for fixed $x>0$ , $p\in ]0,1]$, the maps $\eta\mapsto F^{-}(x,\eta,p)$ and $\eta\mapsto F^{+}(x,\eta,p)$
are respectively strictly increasing and strictly decreasing in $[0,H^{\max}(x,p)/r]$.
\item for all $0<\eta<H^{\max}(x,p)/r$ with $x>0$ and $p\in]0,1]$, we have 
\[\dfrac{\partial}{\partial \eta} F^{\pm}(x,\eta,p)=\dfrac{r}{H_\xi(x,F^{\pm}(x,\eta,p),p)},\]
\item The map $p\mapsto H^{\max}(x,p)$ is strictly decreasing on $]0,1]$ for every fixed $x\in ]0,x^*[$.
\end{enumerate}
\end{lemma}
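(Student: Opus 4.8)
The key structural fact is that for fixed $x>0$, $p\in]0,1]$, the map $\xi\mapsto -H(x,\xi,p)$ is convex and lower semicontinuous on $[0,+\infty[$ (as noted right before \eqref{eq:detHamil-2}, being a sum of the convex conjugates $L^\circ(\xi/p)$, $c^\circ(x\xi)$ and a linear term in $\xi$). Equivalently $\xi\mapsto H(x,\xi,p)$ is \emph{concave}. Since $H(x,0,p)=0$ (Lemma \ref{lemma:Hprop} (2)) and, by Lemma \ref{lemma:Hprop} (1), $H(x,\xi,p)\le \left(\frac{\lambda+r}{p}-\lambda-\mu+\sigma^2\right)x\xi - L^\circ(\xi/p) - c^\circ(x\xi)$ which tends to $-\infty$ as $\xi\to+\infty$ (because $L^\circ(\xi/p)$ grows superlinearly once $\xi/p>L'(0)$: indeed $L^\circ(s)\ge s\,u - L(u)$ for any fixed $u\in]0,1[$, giving linear growth with slope $u/p$ that we can take $>\frac{\lambda+r}{p}-\lambda-\mu+\sigma^2$ by choosing $u$ close to $1$ and then the superlinear part of $c^\circ$, or just $L^\circ$ alone, dominates). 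So a concave function starting at $0$ with value $0$, positive derivative at $0$ when $L'(0),c'(0)$ are finite — actually we only need: $H_\xi(x,0^+,p)=\frac1p\bigl[x((\lambda+r)-p(\lambda+\mu))\bigr]>0$ by \eqref{eq:gradH} since $u^*=v^*=0$ near $\xi=0$ and $r>\mu$ — rising to a strictly positive maximum $H^{\max}(x,p)$ at some $\xi^\sharp(x,p)>0$, then decreasing to $-\infty$. Concavity gives that $\xi^\sharp$ is unique and that for any level $r\eta\in]0,H^{\max}(x,p)[$ the equation $H(x,\xi,p)=r\eta$ has exactly two solutions, one in $]0,\xi^\sharp[$ and one in $]\xi^\sharp,+\infty[$; for $r\eta=H^{\max}$ only $\xi^\sharp$; for $r\eta>H^{\max}$ none. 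That is part (1). First I would record these monotonicity/limit facts about $\xi\mapsto H(x,\xi,p)$ carefully, then read off (1).

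For (2), the strict monotonicity of $\eta\mapsto F^-$ and $\eta\mapsto F^+$ is immediate from the above: on $]0,\xi^\sharp[$ the concave function $H(x,\cdot,p)$ is strictly increasing (its right derivative is positive there, being $\ge$ the right derivative at $\xi^\sharp$ which is $\ge 0$, and strict because if the derivative vanished on a subinterval $H$ would be constant there, contradicting $H(x,0,p)=0<r\eta$ for the relevant range — more cleanly, strict concavity of $c^\circ$ or of $L^\circ$ on the active region gives strict concavity of $H$, hence strict monotonicity of each branch); inverting a strictly increasing (resp. decreasing) continuous bijection onto its range yields $F^-$ strictly increasing (resp. $F^+$ strictly decreasing) on $[0,H^{\max}(x,p)/r]$, with the endpoint convention $F^\pm(x,H^{\max}/r,p)=\xi^\sharp$ making them continuous. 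I would also note $F^-(x,0,p)=0$ (the other root of $H=0$ is $\xi=0$) which pins down the left endpoint.

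Part (3) is the inverse function theorem: on $]0,\xi^\sharp[$ and on $]\xi^\sharp,+\infty[$ the derivative $H_\xi(x,\cdot,p)$ is nonzero (strictly positive on the first, strictly negative on the second, by strict concavity), and $H$ is $C^1$ in $\xi$ there by \eqref{eq:gradH} together with the regularity of $u^*,v^*$ from Lemma \ref{lemma:subdiffcost}/Lemma \ref{lemma:Hamgrad}; differentiating the identity $r\eta=H(x,F^\pm(x,\eta,p),p)$ with respect to $\eta$ gives $r=H_\xi(x,F^\pm,p)\cdot\partial_\eta F^\pm$, i.e. the claimed formula, valid precisely for $0<\eta<H^{\max}(x,p)/r$ where $F^\pm$ stays off $\xi^\sharp$.

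Part (4), the strict decrease of $p\mapsto H^{\max}(x,p)$ for fixed $x\in]0,x^*[$, is the one genuinely new computation and I expect it to be the main obstacle. The clean route is an envelope argument: $H^{\max}(x,p)=H(x,\xi^\sharp(x,p),p)$ and since $\xi^\sharp$ is the maximizer, $\frac{d}{dp}H^{\max}(x,p)=H_p(x,\xi^\sharp(x,p),p)$ by the envelope theorem (the $\xi$-derivative term vanishes as $H_\xi(x,\xi^\sharp,p)=0$). By the third line of \eqref{eq:gradH}, $H_p(x,\xi,p)=\bigl(u^*(\xi,p)-x(\lambda+r)\bigr)\cdot\frac{\xi}{p^2}$. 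So I must show $u^*(\xi^\sharp,p)<x(\lambda+r)$, equivalently $u^*(\xi^\sharp,p)-x(\lambda+r)<0$ and $\xi^\sharp>0$, giving $H_p<0$, hence $H^{\max}$ strictly decreasing. To see $u^*(\xi^\sharp,p)<x(\lambda+r)$: at the maximizer, $H_\xi(x,\xi^\sharp,p)=0$, and by the middle line of \eqref{eq:gradH} this reads $x\bigl((\lambda+r)-p(\lambda+\mu+v^*(x,\xi^\sharp))\bigr)=u^*(\xi^\sharp,p)$; since $p\le 1$ and $\lambda,\mu,v^*\ge 0$ we get $u^*(\xi^\sharp,p)\le x(\lambda+r)$, with strict inequality because $p(\lambda+\mu+v^*)>0$ would need $\lambda+\mu>0$ — if $\lambda=\mu=0$ one argues instead that $u^*(\xi^\sharp,p)=x(\lambda+r)p\cdot(\text{something})$... rather, when $\lambda+\mu+v^*(x,\xi^\sharp)>0$ strict inequality is immediate, and if $\lambda+\mu=0$ then $r>\mu=0$ so $r>0$, $x(\lambda+r)=xr>0=u^*(\xi^\sharp,p)\cdot 0$... to avoid edge cases I would simply note that the regime $\lambda=\mu=0$ forces, via $H_\xi(x,\xi^\sharp,p)=0$, that $u^*(\xi^\sharp,p)=x(\lambda+r)=xr>0$, so $u^*>0$ means $\xi^\sharp\ge pL'(0)$ and $u^*=(L')^{-1}(\xi^\sharp/p)<1\le x(\lambda+r)$ provided $x(\lambda+r)\ge1$; the remaining small-$x$ case is handled by observing $u^*<1$ always while the concavity forces $\xi^\sharp$ finite, and a direct sign check of $H_p$ using $u^*\le 1$. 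Cleanly: $H_p(x,\xi^\sharp,p)=(u^*-x(\lambda+r))\xi^\sharp/p^2$ and from $H_\xi(x,\xi^\sharp,p)=0$ we have $u^*=x[(\lambda+r)-p(\lambda+\mu+v^*)]$, so $u^*-x(\lambda+r)=-xp(\lambda+\mu+v^*)\le 0$, and this is $<0$ unless $\lambda+\mu+v^*(x,\xi^\sharp)=0$, i.e. $\lambda=\mu=0$ and $v^*(x,\xi^\sharp)=0$; but then $u^*=x(\lambda+r)=xr$ and also $v^*=0$ means $x\xi^\sharp<c'(0)$, while $H^{\max}(x,p)=H(x,\xi^\sharp,p)>0$ combined with $r>\mu=0$ gives, from the first ODE, a contradiction with $rV=H^{\max}$ only if... this degenerate subcase I would dispatch by noting $r>\mu$ and $\sigma\ge0$ force $(\lambda+r)/p-\lambda-\mu+\sigma^2>0$ always, so if $\lambda=\mu=0$ then $r/p+\sigma^2>0$ trivially and the concave function $H(x,\cdot,p)=r\xi/p+\sigma^2x\xi - L^\circ(\xi/p)-c^\circ(x\xi)$ (here with $\sigma=0$ in the deterministic section, just $r\xi/p - L^\circ - c^\circ$) has its max where $r/p = (L^\circ)'(\xi^\sharp/p)/p\cdot$... — in any event $\partial_\xi$ vanishing there together with $u^*\le1<\infty$ gives $H_p<0$ as soon as $x(\lambda+r)>0$, which holds since $x>0$ and $r>\mu\ge0$ so $r>0$. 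Thus $H_p(x,\xi^\sharp(x,p),p)<0$ for all $p\in]0,1]$, and integrating gives $H^{\max}(x,p_1)>H^{\max}(x,p_2)$ for $p_1<p_2$. The only care needed is continuity/differentiability of $p\mapsto\xi^\sharp(x,p)$ to justify the envelope formula; this follows from the implicit function theorem applied to $H_\xi(x,\xi,p)=0$, since $H_{\xi\xi}(x,\xi^\sharp,p)<0$ by strict concavity — alternatively one avoids differentiating $\xi^\sharp$ entirely by the one-sided estimate $H^{\max}(x,p_2)=H(x,\xi^\sharp(x,p_2),p_2)<H(x,\xi^\sharp(x,p_2),p_1)\le H^{\max}(x,p_1)$, where the strict middle inequality is just $H_p(x,\xi,p)<0$ for all $\xi>0$ and $p\in]0,1]$, which is exactly the sign computation above applied at the fixed point $\xi=\xi^\sharp(x,p_2)$ rather than at a critical point — and there $u^*(\xi,p)\le 1$ together with $x(\lambda+r)\ge x r$; if $xr\ge 1$ we are done immediately, and if $xr<1$ we instead use that at $\xi=\xi^\sharp(x,p_2)$ the $\xi$-derivative w.r.t.\ the $p_2$-Hamiltonian vanishes, recovering $u^*-x(\lambda+r)<0$ as before. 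I would present this second (fixed-maximizer) argument as the primary one since it sidesteps regularity of $\xi^\sharp$.
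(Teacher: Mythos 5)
Your proof follows essentially the same route as the paper: concavity of $\xi\mapsto H(x,\xi,p)$, identification of the unique maximizer $\xi^\sharp$ via the sign change of $H_\xi$, inversion of the two monotone branches for (1)--(3), and the envelope identity $\frac{d}{dp}H^{\max}(x,p)=H_p(x,\xi^\sharp,p)=-(\lambda+\mu+v^\sharp)x\xi^\sharp/p$ for (4). Two small remarks: your claim that $L^\circ$ grows ``superlinearly'' is not right --- since the sup defining $L^\circ(\rho)$ is attained at $u=(L')^{-1}(\rho)\to 1^-$, $L^\circ$ grows linearly with slope tending to $1$; and your long digression on the case $\lambda+\mu+v^\sharp=0$ in part (4), while pointing at a genuine degeneracy that the paper's one-line conclusion ``$<0$'' silently ignores, never actually closes it (the fixed-maximizer workaround still needs the same sign information at the frozen $\xi$), whereas the paper simply reads off $H_p(x,\xi^\sharp,p)=-(\lambda+\mu+v^\sharp)x\xi^\sharp$ from \eqref{eq:Rmax} and concludes under the implicit modeling assumption $\lambda+\mu>0$.
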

\begin{proof}
Since for all fixed $x>0$, $0<p\le 1$ we have that $\xi\mapsto H(x,\xi,p)$ is the minimum of a family of affine functions of $\xi$, 
we have that the map $\xi\mapsto H(x,\xi,p)$ is concave down. 
Recalling \eqref{eq:gradH}, and the monotonicity properties of $u^*(\cdot,p)$ and $v^*(x,\cdot)$, since
\begin{itemize}
\item $H_{\xi}(x,\xi,p)=H_{\xi}(x,0,p)>0,\textrm{ for all }\xi\in [0,\min\{pL'(0),c'(0)/x\}]$,
\item $\xi\mapsto H_{\xi}(x,\xi,p),\textrm{ is strictly decreasing for all }\xi>\min\{pL'(0),c'(0)/x\}$,
\item $\displaystyle\lim_{\xi\to+\infty}H_{\xi}(x,\xi,p)=-\infty$,
\end{itemize}
we have that $\xi\mapsto H_{\xi}(x,\xi,p)$ vanishes in at most one point in $[0,+\infty)$, 
so $\xi\mapsto H(x,\xi,p)$ reaches its maximum value $H^{\max}(x,p)$ on $[0,+\infty)$ at a unique point $\xi^{\sharp}(x,p)$,
moreover it is strictly increasing for $0<\xi<\xi^{\sharp}(x,p)$ and strictly decreasing for $\xi>\xi^{\sharp}(x,p)$, with 
$\xi^{\sharp}(x,p)\ge \min\{pL'(0),c'(0)/x\}$.\par\medskip\par
We define 
\begin{itemize}
\item the strictly increasing map $\eta\mapsto F^-(x,\eta,p)$, for $0<\eta<H^{\max}(x,p)/r$, to be the inverse of $\xi\mapsto \frac 1r H(x,\xi,p)$ for $0<\xi<\xi^{\sharp}(x,p)$;
\item the strictly decreasing map $\eta\mapsto F^+(x,\eta,p)$, for $0<\eta<H^{\max}(x,p)/r$, to be the inverse of $\xi\mapsto \frac 1r H(x,\xi,p)$ for $\xi>\xi^{\sharp}(x,p)$.
\end{itemize}
Set
\begin{align*}
u^{\sharp}(x,p)\doteq u^*(\xi^{\sharp}(x,p),p),&&v^{\sharp}(x,p)\doteq v^*(x,\xi^{\sharp}(x,p)).
\end{align*}
Recalling \eqref{eq:gradH}, we have
\begin{align}
\label{eq:Rmax}u^{\sharp}(x,p)&=~\big[(\lambda+r)-(\lambda+\mu+v^{\sharp}(x,p))p\big]\cdot x,\\
\label{eq:Hmax}H^{\max}(x,p)&=~L(u^{\sharp}(x,p))+c(v^{\sharp}(x,p)).
\end{align}
Moreover, 
\begin{itemize}
\item if $\xi^{\sharp}(x,p)\ge pL'(0)$ we have
\begin{equation}
\label{eq:Xisharp}
\xi^{\sharp}(x,p)=pL'(u^{\sharp}(x,p))=pL'\left(\big[(\lambda+r)-(\lambda+\mu+v^{\sharp}(x,p))p\big]\cdot x\right),
\end{equation}
\item if $\xi^{\sharp}(x,p)\ge c'(0)/x$ we have
\begin{equation}
\xi^{\sharp}(x,p)=c'(v^{\sharp}(x,p))/x,
\end{equation}
\end{itemize}
Conversely, assume that given $x>0$, $p\in ]0,1]$, we have 
\[u^*(\xi,p)=x\big((\lambda+r)-p(\lambda +\mu +v^*(x,\xi))\big),\]
then $\xi=\xi^{\sharp}(x,p)$, $v^*(x,\xi)=v^\sharp(x,p)$, $u^*(\xi,p)=u^\sharp(x,p)$.
This follows from the fact that $H_\xi(x,\xi,p)=0$ iff $\xi=\xi^\sharp(x,p)$.
\par\medskip\par
For any fixed $x\geq 0$ and $0<p\le 1$, given $\eta>0$ we consider the equation $r\eta=H(x,\xi,p)$,
and all the statements (1-2-3) follows by applying $F^{\pm}(x,\cdot,p)$ to it. To prove item (4), we notice that
\[\dfrac{d}{dp} H^{\max}(x,p)=\dfrac{d}{dp}H(x,\xi^\sharp(x,p),p)=H_p(x,\xi^\sharp(x,p),p).\]
Recalling \eqref{eq:gradH}, we have
\begin{eqnarray*}
H_p(x,\xi^{\sharp}(x,p),p)&=&\left[{u}^{\sharp}(x,p)-(r+\lambda)x\right]\cdot \dfrac{\xi^{\sharp}(x,p)}{p}\\
&=&-(\lambda+\mu+v^{\sharp}(x,p))x\xi^\sharp(x,p)<0\,,
\end{eqnarray*}
since for $x,p\ne 0$ we have $\xi^\sharp(x,p)>0$.
\end{proof}

\begin{definition}[Normal form of the system]
Given $x>0$, $0<p\le 1$, $0<r\eta\le H^{\max}(x,p)$ we define the maps
\begin{equation}\label{eq:pnormal}
G^{\pm}(x,\eta,p)=\dfrac{(r+\lambda+v^*(x,F^{\pm}(x,\eta,p)))p-(r+\lambda)}{H_{\xi}(x,F^{\pm}(x,\eta,p),p)}.
\end{equation}
Notice that if $rV(x)>H^{\max}(x,p)$, then the first equation of \eqref{eq:Sdode} has no solution.
Otherwise, if $0<rV(x)<H^{\max}(x,p)$ this equation splits into 
\begin{align*}
\begin{cases}V'(x)=F^{-}(x,V(x),p(x)),\\ p'(x)=G^{-}(x,V(x),p(x)),\end{cases}&&\textrm{ or }&&
\begin{cases}V'(x)=F^{+}(x,V(x),p(x)),\\ p'(x)=G^{+}(x,V(x),p(x)).\end{cases}
\end{align*}
\end{definition}

\begin{figure}[ht]
\centering
\begin{tikzpicture}[line cap=round,line join=round,>=triangle 45,x=0.3 \textwidth,y=0.3 \textwidth]
\draw[->,color=black] (-0.10,0) -- (2.40,0);\draw[->,color=black] (0,-0.20) -- (0,1);
\clip(-0.7,-0.20) rectangle (2.40,1.2);
\draw [samples=50,rotate around={-15:(0,0)},domain=0:1.737)] plot (\x,{1-(\x-1)^2});
\draw [dotted] (0.40,0.395)-- (0.40,0);\draw [dotted] (1.09,0.72)-- (1.09,0);
\draw [dotted] (1.61,0.395)-- (1.61,0);\draw [dotted] (1.61,0.395)-- (0,0.395);
\draw [dotted] (1.09,0.725)-- (0,0.725);
\node [below, color=black] at (0.40,0) {$F^{-}(x,\eta,p)$};
\node [below, color=black] at (1.09,0) {$\xi^\sharp(x,p)$};

\node [below, color=black] at (1.61,0) {$F^{+}(x,\eta,p)$};
\node [anchor=north east, color=black] at (0,0) {$O$};
\draw [color=black, fill=black] (0.40,0) circle (1.5pt);
\draw [color=black, fill=black] (1.09,0) circle (1.5pt);
\draw [color=black, fill=black] (1.61,0) circle (1.5pt);
\node [left, color=black] at (0,0.395) {$r\eta$};
\node [left, color=black] at (0,0.725)  {$H^{\max}(x,p)$};
\draw [color=black, fill=black] (0,0.395) circle (1.5pt);
\draw [color=black, fill=black] (0,0.725) circle (1.5pt);
\node [anchor=north east, color=black] at (2.40,0) {$\xi$};
\end{tikzpicture}
\label{fig:g53}
\caption{For $x\ge 0$, $p\in]0,1]$, the function $\xi\mapsto H(x,\xi,p)$ has a unique global maximum
$H^{\max}(x,p)$ attained at $\xi=\xi^\sharp(x,p)$. For $0<r\eta\leq H^{\max}$, the values $F^-(x,\eta, p)\leq \xi^\sharp(x,p)\leq F^+(x,\eta,p)$ are well defined.
Moreover, $F^{\pm}(x,\frac1rH^{\max}(x,p),p)=\xi^\sharp(x,p)$.}
\end{figure}
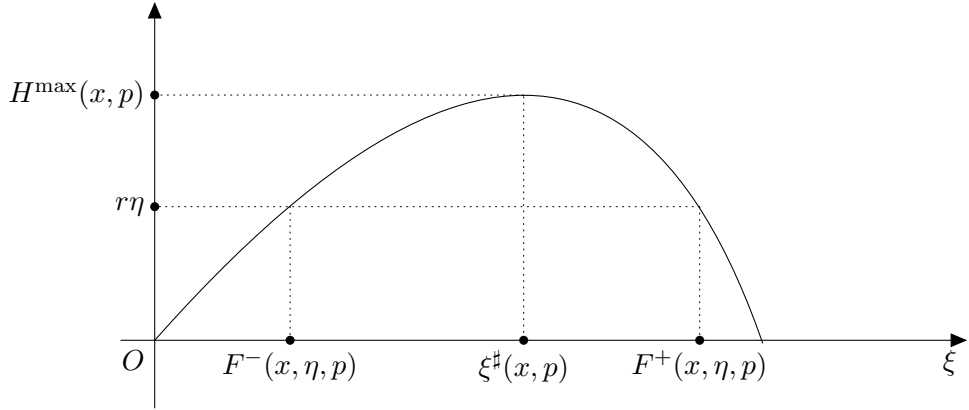

\begin{remark} 
Recalling \eqref{eq:dode1} and \eqref{eq:Rmax}, we observe that 
\begin{itemize}
\item The value $V'(x)=F^+(x,V(x),p)\geq \xi^\sharp(x,p)$ corresponds to the choice of an optimal control such that $\dot x(t)<0$. The total debt-to-ratio is descreasing.
\item The value $V'(x)=F^-(x,V(x),p)\leq \xi^\sharp(x,p)$ corresponds to the choice of an optimal control such that $\dot x(x)>0$. The total debt-to-ratio is increasing.
\item When $rV(x)= H^{\max}(x,p)$, then the value 
\[V'(x)=F^+(x,V(x),p)=F^-(x,V'(x),p)=\xi^\sharp(x,p)\] 
corresponds to the unique control strategy such that $\dot x(t)=0$. 
\end{itemize}
\end{remark}

\begin{remark}
We notice that if $0\le x\xi<\min\{xpL'(0),c'(0)\}$, since $u^*=v^*=0$, we have
\[\xi=F^{-}(x,\eta,p)=\dfrac{pr\eta}{(\lambda+r-p(\lambda+\mu))x},\]
in particular, if $0\le x\xi<\min\{xpL'(0),c'(0)\}$ we have that $\eta\mapsto F^{-}(x,\eta,p)$ is Lipschitz continuous, uniformly for $(x,p)\in [x_1,x^*]\times [p_1,1]$, 
for all $x_1\in]0,x^*]$, $p_1\in ]0,1]$.
If $x\xi>\min\{xpL'(0),c'(0)\}$, we have instead
\[H_{\xi\xi}(x,\xi,p)\le-\dfrac{1}{p}\min\left\{\dfrac{1}{p L''(u^*(\xi,p))}, \dfrac{x^2p}{c''(v^*(x,\xi))}\right\}.\]
\end{remark}

\begin{lemma}\label{lemma:holder}
Given $x_1\in]0,x^*]$, $p_1\in ]0,1]$, there exists a constant $C=C(x_1,p_1)$ such that
\[|F^{-}(x,\eta_1,p)-F^{-}(x,\eta_2,p)|\le C\cdot |\eta_1-\eta_2|^{1/2},\]
for all $x\in [x_1,x^*]$, $p\in [p_1,1]$, $0<\eta_1,\eta_2\le \frac {1}{r} H^{\max}(x,p)$.
\end{lemma}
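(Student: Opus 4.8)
The plan is to use that, for fixed $x\in[x_{1},x^{*}]$ and $p\in[p_{1},1]$, the map $\eta\mapsto F^{-}(x,\eta,p)$ is by construction the inverse of the strictly increasing map $\xi\mapsto\frac1r H(x,\xi,p)$ on $[0,\xi^{\sharp}(x,p)]$, whose derivative $\frac1r H_{\xi}(x,\xi,p)$ is positive and vanishes only at the right endpoint $\xi^{\sharp}(x,p)$ (Lemma \ref{lemma:nonincmax}). Since $\frac{\partial}{\partial\eta}F^{-}=r/H_{\xi}(x,F^{-},p)$, the map $F^{-}(x,\cdot,p)$ is locally Lipschitz away from $\eta=H^{\max}(x,p)/r$, and the only obstruction to a global Lipschitz bound is this simple zero of $H_{\xi}$; the $1/2$-Hölder estimate will follow once we show $H_{\xi}$ vanishes at worst \emph{linearly} at $\xi^{\sharp}(x,p)$, uniformly in $(x,p)$. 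Precisely, I would reduce everything to the pointwise lower bound
\[
H_{\xi}(x,s,p)\ \ge\ c_{1}\,\min\bigl\{1,\ \xi^{\sharp}(x,p)-s\bigr\}\qquad\textrm{for all }s\in[0,\xi^{\sharp}(x,p)],
\]
with a constant $c_{1}=c_{1}(x_{1},p_{1})>0$. Granting this, for $0<\eta_{1}\le\eta_{2}\le\frac1r H^{\max}(x,p)$ put $\xi_{i}:=F^{-}(x,\eta_{i},p)$; monotonicity of $F^{-}(x,\cdot,p)$ gives $0\le\xi_{1}\le\xi_{2}\le\xi^{\sharp}(x,p)$, and using the displayed bound together with the monotonicity of $t\mapsto\min\{1,t\}$,
\[
r(\eta_{2}-\eta_{1})=\int_{\xi_{1}}^{\xi_{2}}H_{\xi}(x,s,p)\,ds\ \ge\ c_{1}\int_{0}^{\xi_{2}-\xi_{1}}\min\{1,t\}\,dt\ \ge\ \frac{c_{1}}{2}\,\min\{\xi_{2}-\xi_{1},1\}^{2}.
\]
If $\xi_{2}-\xi_{1}\le1$ this already gives $\xi_{2}-\xi_{1}\le(2r/c_{1})^{1/2}|\eta_{1}-\eta_{2}|^{1/2}$; if $\xi_{2}-\xi_{1}>1$ then $\eta_{2}-\eta_{1}\ge c_{1}/2r$ is bounded below while $\xi_{2}-\xi_{1}\le\xi^{\sharp}(x,p)\le\Xi$ is bounded above on $[x_{1},x^{*}]\times[p_{1},1]$ (by continuity of $(x,p)\mapsto\xi^{\sharp}(x,p)$, cf.\ Lemma \ref{lemma:nonincmax}), so again $\xi_{2}-\xi_{1}\le C|\eta_{1}-\eta_{2}|^{1/2}$. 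Since $F^{-}(x,\cdot,p)$ is monotone, $|F^{-}(x,\eta_{1},p)-F^{-}(x,\eta_{2},p)|=|\xi_{1}-\xi_{2}|$, and we are done.

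To establish the displayed bound I would first record a two-sided control on $H_{\xi\xi}$. Differentiating the formula for $H_{\xi}$ in \eqref{eq:gradH} and using $u^{*}(\xi,p)=(L')^{-1}(\xi/p)$ and $v^{*}(x,\xi)=(c')^{-1}(x\xi)$ on their active ranges, one finds that $H_{\xi\xi}(x,\xi,p)$ exists off the (at most two) points where a control becomes active and satisfies, for $\xi>0$,
\[
H_{\xi\xi}(x,\xi,p)=-\frac{x^{2}}{c''(v^{*}(x,\xi))}\,\mathbf 1_{\{v^{*}>0\}}-\frac{1}{p^{2}L''(u^{*}(\xi,p))}\,\mathbf 1_{\{u^{*}>0\}},
\]
and $H_{\xi\xi}\equiv 0$ on $[0,\xi_{0}(x,p)]$ with $\xi_{0}(x,p):=\min\{pL'(0),c'(0)/x\}$. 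By \textbf{(A1)}--\textbf{(A2)} ($L'',c''>\delta_{0}$) this yields the uniform upper bound $|H_{\xi\xi}(x,\xi,p)|\le M:=\delta_{0}^{-1}((x^{*})^{2}+p_{1}^{-2})$, so $H_{\xi}(x,\cdot,p)$ is Lipschitz. Moreover \eqref{eq:gradH} together with $r>\mu$ and $p\le 1$ gives $H_{\xi}(x,\xi_{0}(x,p),p)=H_{\xi}(x,0,p)=\frac{x}{p}((\lambda+r)-p(\lambda+\mu))\ge x_{1}(r-\mu)>0$; since $H_{\xi}(x,\xi^{\sharp}(x,p),p)=0$ and $H_{\xi\xi}\le0$ on $[\xi_{0},\xi^{\sharp}]$, integrating and using $|H_{\xi\xi}|\le M$ gives $\xi^{\sharp}(x,p)-\xi_{0}(x,p)\ge x_{1}(r-\mu)/M$. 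Setting $2\delta_{*}:=\min\{x_{1}(r-\mu)/M,\,2\}$ we thus have $\delta_{*}\le1$ and $[\xi^{\sharp}(x,p)-\delta_{*},\ \xi^{\sharp}(x,p)]\subset(\xi_{0}(x,p),\infty)$, with $\xi\ge\xi_{0}(x,p)+\delta_{*}$ on this interval, uniformly on the compact set.

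Next I would obtain a uniform \emph{lower} bound $|H_{\xi\xi}|\ge c_{0}>0$ on $[\xi^{\sharp}(x,p)-\delta_{*},\ \xi^{\sharp}(x,p)]$. On this interval the control realizing the minimum in $\xi_{0}$ is strictly positive and stays in a compact subinterval of its domain: from $\xi\ge\xi_{0}+\delta_{*}$ one gets $u^{*}(\xi,p)\ge(L')^{-1}(L'(0)+\delta_{*})=:\underline u>0$ when $\xi_{0}=pL'(0)$, and $v^{*}(x,\xi)\ge(c')^{-1}(c'(0)+x_{1}\delta_{*})=:\underline v>0$ when $\xi_{0}=c'(0)/x$; and $u^{*}(\xi,p)\le u^{\sharp}(x,p)\le\bar u$, $v^{*}(x,\xi)\le v^{\sharp}(x,p)\le\bar v$ with $\bar u:=\sup u^{\sharp}<1$, $\bar v:=\sup v^{\sharp}<v_{\max}$ on $[x_{1},x^{*}]\times[p_{1},1]$, since $L(u^{\sharp})+c(v^{\sharp})=H^{\max}(x,p)$ is bounded there by \eqref{eq:Hmax} and continuity of $\xi^{\sharp}$. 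Hence $L''(u^{*})\le C_{L}:=\max_{[\underline u,\bar u]}L''<\infty$ and $c''(v^{*})\le C_{c}:=\max_{[\underline v,\bar v]}c''<\infty$, and the formula for $H_{\xi\xi}$ gives $|H_{\xi\xi}(x,\xi,p)|\ge c_{0}:=\min\{1/C_{L},\ x_{1}^{2}/C_{c}\}>0$ there (using $p\le1$). Finally, for $s\in[\xi^{\sharp}-\delta_{*},\xi^{\sharp}]$ we get $H_{\xi}(x,s,p)=\int_{s}^{\xi^{\sharp}}|H_{\xi\xi}(x,\tau,p)|\,d\tau\ge c_{0}(\xi^{\sharp}-s)$, while for $s\in[0,\xi^{\sharp}-\delta_{*}]$, since $H_{\xi}(x,\cdot,p)$ is non-increasing (Lemma \ref{lemma:nonincmax}), $H_{\xi}(x,s,p)\ge H_{\xi}(x,\xi^{\sharp}-\delta_{*},p)\ge c_{0}\delta_{*}$; as $\delta_{*}\le1$ these two estimates combine to give the displayed bound with $c_{1}:=c_{0}\delta_{*}$.

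The main difficulty is this uniform two-sided control of $H_{\xi\xi}$ near the degeneracy point $\xi^{\sharp}(x,p)$: the upper bound $|H_{\xi\xi}|\le M$ is immediate from the strict convexity in \textbf{(A1)}--\textbf{(A2)}, but the lower bound $|H_{\xi\xi}|\ge c_{0}$ requires keeping the maximizing controls $u^{\sharp}(x,p),v^{\sharp}(x,p)$ in the interior of their ranges uniformly in $(x,p)$ (which rests on the uniform boundedness of $H^{\max}$, hence on continuity of $\xi^{\sharp}$), and it requires separating the kink of $\xi\mapsto H(x,\xi,p)$ at $\xi_{0}(x,p)=\min\{pL'(0),c'(0)/x\}$ from $\xi^{\sharp}(x,p)$ by a uniform gap --- which is exactly what the estimate $\xi^{\sharp}-\xi_{0}\ge 2\delta_{*}$ secures. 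Everything else (the comparison of integrals, the case split $\xi_2-\xi_1\lessgtr1$) is routine.
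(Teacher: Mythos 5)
Your proof is correct, and it takes a genuinely different --- and more robust --- route than the paper. The paper uses the same two-case split (controls inactive versus at least one active). On the active region it then invokes Lemma~\ref{lemma:holder-gen}, which requires a uniform lower bound on $-H_{\xi\xi}$, and asserts $-H_{\xi\xi}\ge \min\{1,x_1^2p_1\}/\delta_0$ directly from $L'',c''\ge\delta_0$. But that inequality goes the wrong way: $-H_{\xi\xi}=\tfrac{1}{p^2L''(u^*)}\mathbf 1_{\{u^*>0\}}+\tfrac{x^2}{c''(v^*)}\mathbf 1_{\{v^*>0\}}$, so the hypotheses $L'',c''>\delta_0$ only bound $-H_{\xi\xi}$ from \emph{above}; near the activation threshold $\xi_0(x,p)$ the active control is close to $0$, and nothing in \textbf{(A1)}--\textbf{(A2)} prevents $L''$ or $c''$ from blowing up there, so a global lower bound on the active region need not hold. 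Your argument locates the real difficulty correctly: the degeneracy of $H_\xi$ sits at $\xi^\sharp(x,p)$, not at $\xi_0(x,p)$, and the two are uniformly separated (your $\xi^\sharp-\xi_0\ge 2\delta_*$, proved from the \emph{upper} bound $|H_{\xi\xi}|\le M$, which really does follow from $L'',c''\ge\delta_0$). You then derive the lower bound on $-H_{\xi\xi}$ only where it is needed, on $[\xi^\sharp-\delta_*,\xi^\sharp]$, by confining the active controls to compact subintervals of $(0,1)$ and $(0,v_{\max})$ (lower bounds $\underline u,\underline v$ from the gap $\delta_*$; upper bounds $\bar u,\bar v$ from boundedness of $H^{\max}$ on the compact set $[x_1,x^*]\times[p_1,1]$), and you integrate the resulting estimate $H_\xi(x,s,p)\ge c_1\min\{1,\xi^\sharp-s\}$ directly instead of invoking Lemma~\ref{lemma:holder-gen}. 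The trade-off is that your proof is longer and introduces several auxiliary constants, but it is self-contained and in fact supplies the localization that the paper's shorter argument needs in order to justify its uniform convexity estimate.
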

\begin{proof}
We distinguish two cases:
\begin{enumerate}
\item if $0\le x\xi<\min\{xpL'(0),c'(0)\}$, since $u^*=v^*=0$, we have
\[\xi=F^{-}(x,\eta,p)=\dfrac{pr\eta}{(\lambda+r-p(\lambda+\mu))x},\]
and so
\begin{align*}
|F^{-}(x,\eta_1,p)-F^{-}(x,\eta_2,p)|~&\le~\dfrac{pr}{(\lambda+r-p(\lambda+\mu))x} |\eta_1-\eta_2|\\
~&\le~\dfrac{\sqrt{2B}r}{(r-\mu)x_1} |\eta_1-\eta_2|^{1/2}.
\end{align*}
for all $x\in [x_1,x^*]$, $p\in [p_1,1]$, $0<\eta_1,\eta_2\le \frac {1}{r} H^{\max}(x,p)$.\par\medskip\par
\item If $x\xi>\min\{xpL'(0),c'(0)\}$, we have instead
\[H_{\xi\xi}(x,\xi,p)\le  -\dfrac{1}{p}\min\left\{\dfrac{1}{p L''(u^*(x,\xi,p))}, \dfrac{x^2p}{c''(v^*(x,\xi))}\right\},\]
thus, recalling that by assumption we have $L''(u)\ge \delta_0$ and $c''(v)\ge \delta_0$ for $0<u<1$ and $v\ge 0$,
we obtain
\[-H_{\xi\xi}(x,\xi,p)\ge \dfrac{\min\{1,x_1^2p_1\}}{\delta_0}.\]
By applying Lemma \ref{lemma:holder-gen} to $f(\cdot)=-\frac{1}{r}H(x,\cdot,p)$, we have
\[|F^{-}(x,\eta_1,p)-F^{-}(x,\eta_2,p)|\le \sqrt{\dfrac{2r\delta_0}{\min\{1,x_1^2p_1\}}}|\eta_2-\eta_1|^{1/2}.\]
\end{enumerate}
The proof is complete by choosing $C(x_1,p_1)\doteq \sqrt{\dfrac{2r\delta_0}{\min\{1,x_1^2p_1\}}}+\dfrac{\sqrt{2B}r}{(r-\mu)x_1}$.
\end{proof}
In the next two subsections, we will provide a detail analysis on  the existence of a solution to the system of Hamilton-Jacobi equation \eqref{eq:Sdode} 
which yields an equilibrium solution to the Debt Management Problem \eqref{eq:dode1}-\eqref{eq:covec-det}.

\subsection{Constant strategies} 
We begin our analysis from the control strategies keeping the DTI constant in time, i.e., such that
the corresponding solution $x(\cdot)$ of \eqref{eq:dode1} is constant. In this case, there is no bankruptcy risk, i.e., $T_b=+\infty$.
\begin{definition}[Constant strategies]\label{def:const-strat}
Let $\bar x>0$ be given.
We say that a pair $(\bar u,\bar v)\in [0,1[\times [0,+\infty[$ is a constant strategy for $\bar x$ if
\[\begin{cases}
\left[\left(\dfrac{\lambda+r}{\bar p}-\lambda-\mu -\bar v\right)\bar x - \dfrac{\bar u}{\bar p}\right]=0,\\ \\
\bar p=\dfrac{r+\lambda}{r+\lambda + \bar v},
\end{cases}\]
where the second relation comes from taking $T_b=+\infty$ in \eqref{eq:covec-det}.
\end{definition}
\par\medskip\par
From these equations, if a couple $(\bar u,\bar v)\in [0,1[\times [0,+\infty[$ is a constant strategy then it holds $(r+\lambda)(r-\mu)\bar x=(r+\lambda+\bar v)\bar u$. 
In this case, the borrower will never go bankrupt and thus the cost of this strategy in \eqref{eq:cost-det} is computed by 
\begin{align*}
\dfrac{1}{r}\cdot \Big[L(\bar u)+c(\bar v)\Big]&=~\dfrac{1}{r}\cdot \left[L\left(\dfrac{(r+\lambda)(r-\mu)\bar x}{r+\lambda+\bar v}\right) + c\left(\bar v\right)\right]\\
&=~\dfrac{1}{r}\cdot \left[L\left((r-\mu)\bar x\cdot\bar p\right) + c\left(\left(1-\dfrac{1}{\bar p}\right)(r+\lambda)\right)\right].\end{align*}
We notice that if $\bar x(r-\mu)>1$, we must have $\bar v>1$ and $\bar p<1$, in particular if DTI is sufficiently large, every constant strategy needs to implement currency devaluation,
with a consequently drop of $p$. A more precise estimate will be provided in Proposition \ref{prop:nondev}.\par\medskip\par
We are now interested in the minimum cost of a strategy keeping the debt constant. To this aim, we first characterize the cost of a constant strategy in terms of the variables $x,p$.

\begin{lemma}\label{lemma:cconvH} 
Given any $(x,p)\in ]0,+\infty[\times ]0,1]$, we have
\begin{equation}\label{eq:convH}H^{\max}(x,p)=\min\Big\{L(u)+c(v):\, u\in [0,1],\,v\ge 0,\,u=\big[(\lambda+r)-(\lambda+\mu+v)p\big]\cdot x\Big\}.
\end{equation}
Moreover, $(\hat u,\hat v)$ realizes the minimum in the right hand side of \eqref{eq:convH} if and only if
\[
\begin{cases}
\displaystyle c(\hat v)+px\hat v\xi^\sharp(x,p)&=~\displaystyle\min_{\zeta\ge 0}\left\{px\xi^\sharp(x,p)\zeta+c(\zeta)\right\},\\ \\
\displaystyle L(\hat u)+\hat u\xi^\sharp(x,p)&=~\displaystyle\min_{u\in[0,1]}\left\{\xi^{\sharp}(x,p)u+L(u)\right\}.
\end{cases}
\]
\end{lemma}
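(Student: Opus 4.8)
The plan is to split the argument into two parts: first the identity \eqref{eq:convH}, then the characterization of the minimizers. For the inequality ``$\le$'' in \eqref{eq:convH}, I would fix an arbitrary admissible pair, i.e. $u\in[0,1]$, $v\ge 0$ with $u=[(\lambda+r)-(\lambda+\mu+v)p]\,x$, and use that $H(x,\cdot,p)$ in \eqref{eq:detHamil} is an infimum over the controls: testing it against these particular $u,v$ gives, for every $\xi\ge 0$,
\[
H(x,\xi,p)\ \le\ L(u)-\frac{u\,\xi}{p}+c(v)-vx\xi+\left(\frac{\lambda+r}{p}-\lambda-\mu\right)x\xi .
\]
The crucial observation is that the admissibility constraint is precisely the statement that the coefficient of $\xi$ on the right, namely $-\frac{u}{p}-vx+\left(\frac{\lambda+r}{p}-\lambda-\mu\right)x$, vanishes identically; hence $H(x,\xi,p)\le L(u)+c(v)$ for \emph{every} $\xi\ge 0$, and taking the maximum over $\xi$ yields $H^{\max}(x,p)\le L(u)+c(v)$. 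Since $(u,v)$ was arbitrary, $H^{\max}(x,p)$ is a lower bound for the right-hand side of \eqref{eq:convH}.

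For the reverse inequality and for attainment I would simply exhibit a competitor achieving $H^{\max}(x,p)$. By Lemma \ref{lemma:nonincmax} the map $\xi\mapsto H(x,\xi,p)$ attains its maximum at a unique point $\xi^\sharp(x,p)>0$, and setting $u^\sharp=u^*(\xi^\sharp(x,p),p)\in[0,1)$ and $v^\sharp=v^*(x,\xi^\sharp(x,p))\ge 0$, the relations \eqref{eq:Rmax}--\eqref{eq:Hmax} say exactly that $u^\sharp=[(\lambda+r)-(\lambda+\mu+v^\sharp)p]x$, so that $(u^\sharp,v^\sharp)$ is admissible in \eqref{eq:convH}, and that $L(u^\sharp)+c(v^\sharp)=H^{\max}(x,p)$. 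Combining with the previous paragraph proves \eqref{eq:convH} and shows the minimum is attained, e.g. at $(u^\sharp,v^\sharp)$.

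For the characterization of the minimizers, I would eliminate $u$ through the affine constraint $u=\pi(v):=[(\lambda+r)-(\lambda+\mu+v)p]x$, so that $L(u)+c(v)$ becomes the scalar function $v\mapsto L(\pi(v))+c(v)$ on the compact interval of feasible $v$; by \textbf{(A1)}--\textbf{(A2)} this function has second derivative $\ge (px)^2\delta_0+\delta_0>0$, hence is strictly convex, so the minimizer is unique. Therefore $(\hat u,\hat v)$ realizes the minimum in \eqref{eq:convH} if and only if $(\hat u,\hat v)=(u^\sharp,v^\sharp)=\left(u^*(\xi^\sharp(x,p),p),\,v^*(x,\xi^\sharp(x,p))\right)$. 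By the very definitions of $u^*(\cdot,p)$ and $v^*(x,\cdot)$ this amounts to saying that $\hat u$ minimizes $u\mapsto L(u)-u\xi^\sharp(x,p)/p$ over $u\in[0,1]$ and $\hat v$ minimizes $\zeta\mapsto c(\zeta)-\zeta x\xi^\sharp(x,p)$ over $\zeta\ge 0$ — which, using $\xi^\sharp=pL'(u^\sharp)$ and $x\xi^\sharp=c'(v^\sharp)$ from \eqref{eq:Xisharp} and \eqref{eq:gradH}, are the two scalar minimality conditions in the statement; strict convexity of $L$ and $c$ makes each of these scalar minimizers unique, so the equivalence is a genuine ``if and only if''.

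The computations here are short; the part that genuinely needs care is the bookkeeping with the one-sided constraints $u\in[0,1]$, $v\ge 0$. One must check that the admissible set in \eqref{eq:convH} is nonempty and that $(u^\sharp,v^\sharp)$ lies in it (the bound $u^\sharp\in[0,1)$ follows from $u^*\le 1$ together with $\xi^\sharp(x,p)>0$, and $v^\sharp\in[0,v_{\max})$ similarly), and one must keep the scalar optimality conditions in \emph{constrained} form, since the kink of $L$ at $u=0$ and of $c$ at $v=0$ are exactly the source of the ``zero branches'' of $u^*$ and $v^*$: dropping the constraints $u\in[0,1]$, $v\ge 0$ would make the conditions false.
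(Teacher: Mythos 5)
Your proof of the identity \eqref{eq:convH} is correct and takes a genuinely different, more elementary route than the paper's. The paper invokes the Fenchel--Rockafellar duality theorem for $\inf_v\{f(v)+g(\Lambda v)\}$ with a tailor-made choice of $f,g,\Lambda$ and then identifies the resulting dual problem with $\sup_\xi H(x,\xi,p)$; you instead do plain weak duality (testing the $\min$ in the definition of $H$ against a fixed admissible $(u,v)$ and observing the affine constraint kills the $\xi$-coefficient) followed by exhibiting the competitor $(u^\sharp,v^\sharp)$ via \eqref{eq:Rmax}--\eqref{eq:Hmax}. This buys you a self-contained argument that leans only on Lemma~\ref{lemma:nonincmax} and no external duality theorem; the paper's route is heavier machinery but produces the optimality conditions for free as subdifferential relations. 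Your strict-convexity argument for uniqueness of the minimizer is likewise clean and replaces the Fenchel characterization.

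There is, however, a real mismatch in your final step that you should not gloss over. You correctly derive that $(\hat u,\hat v)$ minimizes the right-hand side of \eqref{eq:convH} if and only if $\hat u$ minimizes $u\mapsto L(u)-u\,\xi^\sharp(x,p)/p$ over $[0,1]$ and $\hat v$ minimizes $\zeta\mapsto c(\zeta)-\zeta x\,\xi^\sharp(x,p)$ over $\zeta\ge0$ (equivalently $\hat u=u^\sharp(x,p)$, $\hat v=v^\sharp(x,p)$). You then assert these ``are the two scalar minimality conditions in the statement.'' They are not, as the statement is literally printed: the displayed conditions have \emph{plus} signs inside the $\min$, i.e.\ $\min_{\zeta\ge0}\{px\,\xi^\sharp\zeta+c(\zeta)\}$ and $\min_{u\in[0,1]}\{\xi^\sharp u+L(u)\}$. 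Since $\xi^\sharp(x,p)>0$ and $L,c$ are increasing, those minimizations are attained only at $\hat u=\hat v=0$, which is generally incompatible with the constraint $\hat u=[(\lambda+r)-(\lambda+\mu+\hat v)p]x$. So the printed conditions cannot be equivalent to realizing the minimum in \eqref{eq:convH}; they contain a sign (and a stray factor of $p$) carried over from the change of dual variable $\nu=-\xi/p$, $\Lambda=-xp$ in the paper's own Fenchel-duality computation, where the optimal dual variable is $-\xi^\sharp/p$, not $\xi^\sharp$. Your minus-sign version is the correct one; you should say explicitly that you are correcting the signs in the statement rather than claiming to reproduce it verbatim.
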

\begin{proof} 
Set $F(v):=f(v)+g(\Lambda v)$ where $f(\zeta)=c(\zeta)$ for $\zeta\ge 0$ and $f(\zeta)=+\infty$ if $\zeta<0$, $C(x,p)=\big[(\lambda+r)-(\lambda+\mu)p\big]\cdot x$,
$g(\zeta)=L(C(x,p)+\zeta)$ if $C(x,p)+\zeta\in [0,1]$
and $g(\zeta)=+\infty$ if $C(x,p)+\zeta\notin [0,1]$, and $\Lambda=-xp$.
By standard argument in convex analysis (see e.g. Theorem 4.2 and Remark 4.2 p. 60 of \cite{ET}), denoted by $f^\circ$, $g^\circ$ the convex conjugates of $f,g$ respectively, we have 
\begin{align*}
\inf_{v\in\mathbb R} F(v)&=~\sup_{\nu\in\mathbb R}~\left[-f^\circ(\Lambda \nu)-g^\circ(-\nu)\right]\\
&=~\sup_{\nu\in\mathbb R}~\left[\min_{\zeta\ge 0}\Big\{c(\zeta)+xp\nu\zeta\Big\}+\min_{C(x,p)+\zeta\in [0,1]}\Big\{L(C+\zeta)+\nu\zeta\Big\}\right]\\
&=~\sup_{\nu\in\mathbb R}~\left[\min_{\zeta\ge 0}\Big\{c(\zeta)+xp\nu\zeta\Big\}+\min_{u\in [0,1]}\Big\{L(u)+\nu u\Big\}-C\nu\right]\\
&=~\sup_{\xi\in\mathbb R}~\left[\min_{\zeta\ge 0}\Big\{c(\zeta)-x\xi\zeta\Big\}+\min_{u\in [0,1]}\Big\{L(u)-u\cdot\dfrac{\xi}{p}\Big\}+\dfrac{C(x,p)}{p}\cdot\xi\right]\\
&=~\sup_{\xi\in\mathbb R}~H(x,\xi,p)=H^{\max}(x,p).
\end{align*}
Moreover, since $\displaystyle\sup_{\xi\in\mathbb R}H(x,\xi,p)$ is attained only at $\xi=\xi^\sharp(x,p)$ according to the strict concavity of $\xi\mapsto H(x,\xi,p)$, $(\hat u,\hat v)$
realizes the minimum in the right hand side of \eqref{eq:convH} if and only if
\[
\begin{cases}
f(\hat v)+f^\circ(\Lambda\xi^{\sharp}(x,p))-\Lambda\hat v\xi^\sharp(x,p)=0,\\
g(\Lambda \hat v)+g^\circ(-\xi^{\sharp}(x,p))+\Lambda \hat v \xi^\sharp(x,p)=0,
\end{cases}
\]
which implies $\hat v\ge 0$, $C(x,p)-px\hat v\in [0,1]$, and
\[
\begin{cases}
\displaystyle c(\hat v)+px\hat v\xi^\sharp(x,p)=\min_{\zeta\ge 0}~\{px\xi^\sharp(x,p)\zeta+c(\zeta)\},\\
\displaystyle L(C(x,p)-px\hat v)-px\hat v \xi^\sharp(x,p)=\min_{\nu\in\mathbb R}~\left\{\xi^{\sharp}(x,p)\nu+L(C(x,p)+\nu)\right\}.
\end{cases}
\]
The second relation can be rewritten as 
\[\displaystyle L(\hat u)+\hat u\xi^\sharp(x,p)=\min_{u\in[0,1]}\left\{\xi^{\sharp}(x,p)u+L(u)\right\}.\]
\end{proof}
Formula \eqref{eq:convH} allows us to give a simpler characterization of the minimum cost of a strategy keeping the debt-to-income ratio constant in time.
Indeed, given $x\in [0,x^*]$, we select $(u(x),v(x))$ keeping the debt-to-income ratio constant in time. This defines uniquely a value $p=p(x)$ by Definition 
\ref{def:const-strat} and impose a relation between $u(x)$ and $v(x)$. 
Then we take the minimum over all the costs of such strategies, i.e., the right hand side of formula \eqref{eq:convH}.
This naturally leads to the following definition.

\begin{definition}[Optimal cost for constant strategies]\label{def:optconst}\emph{
Given $x\in [0,x^*]$, we define
\[W(x)=\dfrac{1}{r}\cdot H^{\max}\left(x,p_c(x)\right),\]
where
\begin{equation}\label{eq:pcvc}
\begin{cases}p_c(x)=\dfrac{r+\lambda}{r+\lambda + v_c(x)},\\ \\
\displaystyle v_c(x)=\underset{v\ge 0}{\mathrm{argmin}}\left[L\left(\dfrac{(r+\lambda)(r-\mu)x}{r+\lambda+v}\right)+c(v)\right].\end{cases}
\end{equation}
For every $x\in [0,x^*]$, $W(x)$ denotes the minimum cost of a strategy keeping the DTI ratio constant in time.}
\end{definition}

The next results proves that if the debt-to-income ratio is sufficiently small, the optimal strategy keeping it constant does not use the devaluation of currency.

\begin{proposition}[Non-devaluating regime for optimal constant strategies]\label{prop:nondev}
Let $x_c\ge 0$ be the unique solution of the following equation in $x$ 
\[(r+\lambda)c'(0)=(r-\mu)xL'\left((r-\mu)x\right).\]
Then
\begin{itemize}
\item for all $x\in [0,\min\{x_c,x^*\}]$ we have $W(x)=\dfrac{1}{r}\cdot L((r-\mu)x)$ and $p_c(x)=1$,
\item for all $x\in ]\min\{x_c,x^*\},x^*]$ we have 
\begin{align*}
W(x)&=~\dfrac{1}{r}\left[L\left(\dfrac{(r+\lambda)(r-\mu)x}{r+\lambda+v_c(x)}\right)+c(v_c(x))\right],\\
p_c(x)&=~\dfrac{r+\lambda}{r+\lambda + v_c(x)}<1,
\end{align*}
where $v_c(x)>0$ solves the following equation in $v$ 
\[c'(v)=\dfrac{(r+\lambda)(r-\mu)x}{(r+\lambda+v)^2}\cdot L'\left(\dfrac{(r+\lambda)(r-\mu)x}{r+\lambda+v}\right).\]
\item for every $x\in]0,x^*[$ we have
\begin{equation}\label{eq:k-slop}
W'(x)=\dfrac{r-\mu}{r} p_c(x) L'(p_c(x)(r-\mu)x)<\xi^\sharp(x,p_c(x)).
\end{equation}
\end{itemize}
\end{proposition}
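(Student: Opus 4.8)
The plan is to reduce the proposition to the one-variable minimization defining $v_c(x)$ and then to differentiate. Recall from Definition \ref{def:optconst} and the discussion preceding it (using \eqref{eq:convH} and Lemma \ref{lemma:cconvH}) that $rW(x)=\min_{v\ge0}\Phi(x,v)$, where $\Phi(x,v)\doteq L\!\left(\dfrac{(r+\lambda)(r-\mu)x}{r+\lambda+v}\right)+c(v)$, the minimum being attained at $v=v_c(x)$. The structural fact to establish first is that $\Phi(x,\cdot)$ is strictly convex: $v\mapsto\frac{(r+\lambda)(r-\mu)x}{r+\lambda+v}$ is positive, strictly decreasing and convex on $[0,+\infty[$; $L$ is increasing and strictly convex by \textbf{(A1)}, so the composition is convex; and adding the strictly convex $c$ (by \textbf{(A2)}) gives strict convexity. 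Hence $v_c(x)$ is unique, and $v_c(x)=0$ iff $\partial_v\Phi(x,0)\ge0$. A direct computation gives $\partial_v\Phi(x,0)=c'(0)-\frac{(r-\mu)x}{r+\lambda}L'((r-\mu)x)$, so $v_c(x)=0\iff(r+\lambda)c'(0)\ge(r-\mu)x\,L'((r-\mu)x)$. Since $x\mapsto(r-\mu)x\,L'((r-\mu)x)$ is continuous and strictly increasing on $[0,1/(r-\mu)[$ (because $L'>0$, $L''>0$), vanishes at $x=0$, and tends to $+\infty$ as $x\to1/(r-\mu)$ from below (because $L(u)\to+\infty$ as $u\to1^-$), the equation in the statement has a unique solution $x_c$, and $v_c(x)=0$ if and only if $x\le x_c$.

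Now the two regimes. If $x\in[0,\min\{x_c,x^*\}]$ then $v_c(x)=0$, hence $p_c(x)=\frac{r+\lambda}{r+\lambda}=1$, and, using $c(0)=0$, $rW(x)=\Phi(x,0)=L\!\big(\tfrac{(r+\lambda)(r-\mu)x}{r+\lambda}\big)=L((r-\mu)x)$, which is the first bullet. If $x\in\,]\min\{x_c,x^*\},x^*]$ then the minimizer is interior, so $v_c(x)>0$ is the unique solution of $\partial_v\Phi(x,v)=0$, that is of $c'(v)=\frac{(r+\lambda)(r-\mu)x}{(r+\lambda+v)^2}L'\!\big(\tfrac{(r+\lambda)(r-\mu)x}{r+\lambda+v}\big)$; then $p_c(x)=\frac{r+\lambda}{r+\lambda+v_c(x)}<1$ and $rW(x)=\Phi(x,v_c(x))=L\!\big(\tfrac{(r+\lambda)(r-\mu)x}{r+\lambda+v_c(x)}\big)+c(v_c(x))$, which is the second bullet. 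Implicit differentiation of the first-order condition, together with $\partial_v^2\Phi>0$ and $\partial_x\partial_v\Phi<0$, gives $v_c\in C^1$ and $v_c'\ge0$, so $p_c$ is continuous on $[0,x^*]$ and $C^1$ on $]x_c,x^*[$.

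For the derivative formula \eqref{eq:k-slop}: on $[0,x_c[$ one differentiates $rW(x)=L((r-\mu)x)$ directly, obtaining $rW'(x)=(r-\mu)L'((r-\mu)x)=(r-\mu)p_c(x)L'(p_c(x)(r-\mu)x)$ since $p_c\equiv1$; on $]x_c,x^*[$, since $v_c(x)$ realizes the minimum of $\Phi(x,\cdot)$, the envelope theorem yields $rW'(x)=\partial_x\Phi(x,v_c(x))=L'\!\big(\tfrac{(r+\lambda)(r-\mu)x}{r+\lambda+v_c(x)}\big)\cdot\tfrac{(r+\lambda)(r-\mu)}{r+\lambda+v_c(x)}=(r-\mu)p_c(x)L'(p_c(x)(r-\mu)x)$, the same formula; together with continuity at $x_c$ this gives $W\in C^1(]0,x^*[)$. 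It remains to prove $W'(x)<\xi^\sharp(x,p_c(x))$. Since $\xi\mapsto H_\xi(x,\xi,p_c(x))$ is strictly decreasing with its unique zero at $\xi^\sharp(x,p_c(x))$ (Lemma \ref{lemma:nonincmax}), it suffices to show $H_\xi\big(x,W'(x),p_c(x)\big)>0$. Writing $\bar u\doteq p_c(x)(r-\mu)x$ and using \eqref{eq:gradH}, the identity $\frac{\lambda+r}{p_c(x)}=r+\lambda+v_c(x)$ from the definition of $p_c$, and $\bar u=[(\lambda+r)-(\lambda+\mu+v_c(x))p_c(x)]x$ from \eqref{eq:Rmax}, one obtains $p_c(x)\,H_\xi(x,W'(x),p_c(x))=\big(\bar u-u^*(W'(x),p_c(x))\big)+p_c(x)x\big(v_c(x)-v^*(x,W'(x))\big)$; from $W'(x)/p_c(x)=\frac{r-\mu}{r}L'(\bar u)<L'(\bar u)$ one gets $u^*(W'(x),p_c(x))<\bar u$, and comparing $x\,W'(x)=\frac1r\bar u\,L'(\bar u)$ with the first-order relation $c'(v_c(x))=\frac1{r+\lambda+v_c(x)}\bar u\,L'(\bar u)$ controls the sign of $v_c(x)-v^*(x,W'(x))$. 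This sign bookkeeping — the only place where $\mu<r$ enters essentially — is the delicate step of the proof; carrying it through gives $H_\xi(x,W'(x),p_c(x))>0$, hence $W'(x)<\xi^\sharp(x,p_c(x))$.
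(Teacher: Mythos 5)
Your treatment of the first two bullets and of the formula $W'(x)=\frac{r-\mu}{r}\,p_c(x)\,L'(p_c(x)(r-\mu)x)$ is correct and follows essentially the paper's route: you minimize the auxiliary convex function $F^x$, split at $\partial_v F^x(0)\gtrless 0$ to characterize $x_c$, and differentiate $W$ via the envelope theorem.

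The final inequality $W'(x)<\xi^\sharp(x,p_c(x))$ is where there is a genuine gap. Your reformulation as $H_\xi(x,W'(x),p_c(x))>0$ is a valid use of Lemma~\ref{lemma:nonincmax}, and the identity
\[
p_c(x)\,H_\xi(x,W'(x),p_c(x))=\bigl(\bar u-u^*(W'(x),p_c(x))\bigr)+p_c(x)\,x\,\bigl(v_c(x)-v^*(x,W'(x))\bigr),
\]
with $\bar u=p_c(x)(r-\mu)x$, is correct; you also correctly see that the first summand is positive. But you then defer to ``sign bookkeeping'' and assert that carrying it through yields $H_\xi>0$. In fact the second summand is strictly \emph{negative} on $]x_c,x^*[$: the first-order condition for $v_c$ gives $c'(v_c(x))=\frac{\bar u}{r+\lambda+v_c(x)}\,L'(\bar u)$, while $x\,W'(x)=\frac{\bar u}{r}\,L'(\bar u)$, and since $r<r+\lambda+v_c(x)$ we get $c'(v_c(x))<x\,W'(x)$, hence $v_c(x)<v^*(x,W'(x))$. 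So the two terms compete, and nothing in your argument controls which dominates; note that \textbf{(A1)}--\textbf{(A2)} give \emph{lower} bounds on $L''$, $c''$ but no upper bounds, so the positive term $\bar u-u^*(W'(x),p_c(x))=\int_{\frac{r-\mu}{r}L'(\bar u)}^{L'(\bar u)}\frac{ds}{L''\bigl((L')^{-1}(s)\bigr)}$ can be made very small while the negative one stays bounded away from zero. The paper's proof reaches the inequality by a different route — it evaluates $\xi^\sharp(x,p_c(x))$ from \eqref{eq:Xisharp} and then substitutes $p_c(x)=\frac{r+\lambda}{r+\lambda+v^\sharp(x,p_c(x))}$ in the argument of $L'$ — a step that tacitly requires identifying $v^\sharp(x,p_c(x))$ with $v_c(x)$, which is precisely the delicate comparison your decomposition makes explicit and then leaves unresolved. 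As it stands, your proof of the strict inequality in \eqref{eq:k-slop} is not closed.
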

\begin{proof}
Given $x\in]0,x^*[$, we define the convex function 
\[F^x(v)\doteq \begin{cases}\dfrac{1}{r}\cdot \left[L\left(\dfrac{(r+\lambda)(r-\mu)x}{r+\lambda+v}\right)+c(v)\right],&\textrm{ if }v\ge 0,\,\dfrac{(r+\lambda)(r-\mu)x}{r+\lambda+v}\in [0,1],\\ \\
+\infty,&\textrm{otherwise}.\end{cases}\]
We compute
\[
\dfrac{d}{dv}F^x(v)=\dfrac{1}{r}\cdot \left[c'(v)-L'\left(\dfrac{(r+\lambda)(r-\mu)x}{r+\lambda+v}\right)\dfrac{(r+\lambda)(r-\mu)x}{(r+\lambda+v)^2}\right],
\]
which is monotone increasing and satisfies $\displaystyle\lim_{v\to+\infty}\dfrac{d}{dv}F^x(v)=+\infty$,
\[
\dfrac{d}{dv}F^x(v)~\ge~\dfrac{d}{dv}F^x(0)=\dfrac{1}{r}\cdot \left[c'(0)-L'\left((r-\mu)x\right)\dfrac{(r-\mu)x}{r+\lambda}\right]\,.
\]
Two cases may occur:
\begin{itemize}
\item If $\dfrac{d}{dv}F^x(0)\ge 0$, we have that $v=0$ realizes the minimum of $F$ on $[0,+\infty[$. This occours when $x\in [0,\min\{x_c,x^*\}]$ where 
$x_c$ is the unique solution of 
\[(r+\lambda)c'(0)=(r-\mu)xL'\left(\dfrac{(r+\lambda)(r-\mu)x}{r+\lambda}\right),\]
and it implies $W(x)=\dfrac{1}{r}\cdot L((r-\mu)x)$ and $p_c(x)=1$.\par\medskip\par
\item If we have $\min\{x_c,x^*\}<x\le x^*$, then there exists a unique point $v_c(x)>0$ such that $F'(v_c(x))=0$, and this point is characterized by 
\[
c'(v_c(x))=\dfrac{(r+\lambda)(r-\mu)x}{(r+\lambda+v_c(x))^2}\cdot L'\left(\dfrac{(r+\lambda)(r-\mu)x}{r+\lambda+v_c(x)}\right).
\]
The remaining statements follows noticing that for $\min\{x_c,x^*\}<x\le x^*$ we have
\begin{align*}
W'(x)~&=~\ \dfrac{\partial F^x}{\partial x}(v_c(x))+\dfrac{\partial}{\partial v}F^x(v_c(x)) \cdot v'_c(x)=\dfrac{\partial F^x}{\partial x}(v_c(x))\\
&=~\ \dfrac{r-\mu}{r} p_c(x) L'(p_c(x)(r-\mu)x),
\end{align*}
and deriving the explicit expression of $W(x)$ for $[0,\min\{x_c,x^*\}]$ yields the same formula.
Notice that, by \eqref{eq:Xisharp}, we have
\begin{align*}
\xi^\sharp(x,p_c(x))~&=~p_c(x) L'\left(\big[(\lambda+r)-(\lambda+\mu+v^{\sharp}(x,p_c(x)))p_c(x)\big]\cdot x\right)\\
&=~ p_c(x)L'\left(\left[(\lambda+r)-(\lambda+\mu+v^{\sharp}(x,p_c(x)))\cdot \dfrac{\lambda+r}{\lambda+r+v^{\sharp}(x,p_c(x))}\right]\cdot x\right)\\
&=~ p_c(x) L'(p_c(x)(r-\mu)\cdot x)>W'(x),
\end{align*}
where we used the fact that $L'$ is strictly increasing and, since the argument of $L'$ must be nonnegative, we have
\[\dfrac{\lambda+r}{\lambda+\mu+v^{\sharp}(x,p_c(x))}~\ge~p_c(x).\]
\end{itemize}
The proof is complete.
\end{proof}

\subsection{Existence of an equilibrium solution.} 
In the subsection, we will establish an existence result of a equilibrium solution to  the debt management problem \eqref{eq:dode1}-\eqref{eq:covec-det}. 
Before going to state our main theorem, we recall from Proposition \ref{prop:nondev} that $v_{c}$ is the unique solution to 
\[c'(v)=\dfrac{(r+\lambda)(r-\mu)x}{(r+\lambda+v)^2}\cdot L'\left(\dfrac{(r+\lambda)(r-\mu)x}{r+\lambda+v}\right)\,,\]
and 
\[p_c(x^*)=\dfrac{r+\lambda}{r+\lambda + v_c(x^*)} <1\,,\]
\begin{equation}\label{eq:w11}
W(x^*)=\dfrac{1}{r}\left[L\left(\dfrac{(r+\lambda)(r-\mu)x^*}{r+\lambda+v_c(x^*)}\right)+c(v_c(x^*))\right]\,.
\end{equation}

\begin{theorem}\label{thm:main2} 
Assume that the cost functions $L$ and $c$ satisfies the assumptions \textbf{(A1)-(A2)}, and moreover
\begin{equation}\label{eq:x-large}
W(x^*)>B\qquad\textrm{ and }\qquad \theta(x^*)\le p_c(x^*)\,.
\end{equation}
Then the debt management problem \eqref{eq:dode1}-\eqref{eq:covec-det} admits an equilibrium solution $(u^*,v^*,p^*)$ associated to  Lipschitz continuous value functions $V^*$ in feedback form such that $p^*$ is decreasing, $V^*$ is strictly increasing and
\[
V^*(x)~\leq~W^*(x)\qquad\forall x\in [0,x^*].
\]
\end{theorem}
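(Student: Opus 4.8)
The plan is to construct a solution $(V^*,p^*)$ of the boundary value problem \eqref{eq:Sdode}--\eqref{eq:bdc} by backward integration from the bankruptcy threshold $x=x^*$, using the normal form of the system, and then to deduce the equilibrium solution of Definition \ref{def:eqsol} by a verification argument. At $x=x^*$ the prescribed data are $V(x^*)=B$, $p(x^*)=\theta(x^*)$. From Lemma \ref{lemma:nonincmax}(4) (strict monotonicity of $p\mapsto H^{\max}(x,p)$) together with the hypotheses $W(x^*)>B$ and $\theta(x^*)\le p_c(x^*)$ one gets
\[
rB < rW(x^*) = H^{\max}\bigl(x^*,p_c(x^*)\bigr) \le H^{\max}\bigl(x^*,\theta(x^*)\bigr),
\]
so the first equation of \eqref{eq:Sdode} at $x^*$ admits exactly the two roots $F^{\pm}(x^*,B,\theta(x^*))$. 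Since $W(x^*)>B$ means that at the threshold it is strictly cheaper to declare bankruptcy than to keep the debt constant, the branch emanating from $x^*$ should be the one with $\dot x>0$ (rushing towards bankruptcy), i.e. $V'(x^*)=F^-(x^*,B,\theta(x^*))$, $p'(x^*)=G^-(x^*,B,\theta(x^*))$. I would then integrate the normal-form system $V'=F^-(x,V,p)$, $p'=G^-(x,V,p)$ backward, local existence being provided by the H\"older estimate of Lemma \ref{lemma:holder} for $F^-$ and by the continuity of $G^-$ off the degeneracy set $\{H_\xi=0\}$.

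The second step is a family of a priori estimates that let the backward solution be continued to $x=0$. By comparison with the constant sub/supersolutions $\{0,B\}$ and $\{\theta(x^*),1\}$, exactly as in Step~1 of the proof of Theorem \ref{thm:Exist-Stoch}, one shows that along the solution $V$ is strictly increasing with $V\in[0,B]$ and $p$ is decreasing with $p\in[\theta(x^*),1]$; hence $V'=F^-(x,V,p)\le\xi^\sharp(x,p)$ stays bounded and $p$ stays away from $0$, so the only ways the $F^-$ solution can fail to reach $x=0$ are (a) meeting a point $\hat x>0$ with $rV(\hat x)=H^{\max}(\hat x,p(\hat x))$, where the branches $F^{\pm}$ merge, or (b) a degeneration of the vector field as $x\to0^+$. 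For (b) I would invoke the non-devaluation regime of Proposition \ref{prop:nondev}: near $x=0$ the quantity $xV'(x)$ is small, hence $v^*\equiv0$, the reduced equation becomes $V'(x)=\dfrac{p(x)\,rV(x)}{\bigl(\lambda+r-p(x)(\lambda+\mu)\bigr)x}$, which (being of Euler type) has solutions behaving like $x^{\,r/(r-\mu)}$, so $V(x)\to0$ as $x\to0^+$, in particular $V>0$ on $]0,x^*]$ and no ``Ponzi'' branch with $V\equiv0$ can appear; meanwhile a barrier of the form $p^-(x)=1-cx^{\gamma}$, built as in Step~3(ii) of the proof of Theorem \ref{thm:Exist-Stoch} with $\sigma=0$, yields $p(x)\ge 1-cx^{\gamma}$, so that $p^*(0)=1$ and $V^*(0)=0$.

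The technical heart of the proof is obstruction (a). If the $F^-$ solution reaches $\hat x>0$ with $rV(\hat x)=H^{\max}(\hat x,p(\hat x))$, then $V'(\hat x)=\xi^\sharp(\hat x,p(\hat x))$ and $H_\xi$ vanishes there, so the expression \eqref{eq:pnormal} for $G^{\pm}$ is singular at $\hat x$ unless the numerator $(r+\lambda+v^\sharp(\hat x,p(\hat x)))\,p(\hat x)-(r+\lambda)$ vanishes as well, i.e. unless $p(\hat x)=\dfrac{r+\lambda}{r+\lambda+v^\sharp(\hat x,p(\hat x))}=p_c(\hat x)$ and hence $V(\hat x)=\tfrac1r H^{\max}(\hat x,p_c(\hat x))=W(\hat x)$. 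I expect the main work to lie in showing that the backward shooting produces this compatibility automatically — the monotonicity of $H^{\max}(x,\cdot)$ and of $\eta\mapsto F^{\pm}(x,\eta,\cdot)$ from Lemma \ref{lemma:nonincmax}, combined with $p(x^*)=\theta(x^*)\le p_c(x^*)$, should force $p$ to reach the curve $p_c$ precisely when $rV$ meets $H^{\max}$ — and then in continuing the solution past $\hat x$ on the other branch, $V'=F^+(x,V,p)$, $p'=G^+(x,V,p)$, down to $x=0$. The glued pair $(V^*,p^*)$ is then continuous, with $V^*$ strictly increasing, $p^*$ decreasing and lower semicontinuous, and the induced feedback $u^*(x)=u^*((V^*)'(x),p^*(x))$, $v^*(x)=v^*(x,(V^*)'(x))$ piecewise Lipschitz, and it solves \eqref{eq:Sdode}--\eqref{eq:bdc}.

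It remains to check that $(u^*,v^*,p^*)$ is an equilibrium solution in the sense of Definition \ref{def:eqsol}. Since $V^*$ is a locally Lipschitz, piecewise $C^1$ solution of $rV=H(x,V',p^*)$ with $V^*(0)=0$, $V^*(x^*)=B$, the standard computation $\frac{d}{dt}\bigl(e^{-rt}V^*(x(t))\bigr)\ge-e^{-rt}\bigl[L(u(t))+c(v(t))\bigr]$ along any admissible trajectory of \eqref{eq:cont-feed}, integrated from $0$ to $T_b$, yields $J\ge V^*(x_0)$, with equality along any Carath\'eodory solution of the feedback $(u^*,v^*)$; this is part~(i). Part~(ii) is exactly the second equation of \eqref{eq:Sdode}, which by the linear-ODE (Feynman--Kac with $\sigma=0$) representation states that $p^*(x_0)$ equals the discounted bond value \eqref{eq:covec-det} along the feedback trajectory from $x_0$. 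Finally, once $V^*$ is identified as the value function, the inequality $V^*\le W^*$ claimed in the statement (with $W^*=W$ of Definition \ref{def:optconst}) is immediate, since for every $x_0\in[0,x^*]$ the constant strategy of Definitions \ref{def:const-strat}--\ref{def:optconst} is admissible with cost $W(x_0)$.
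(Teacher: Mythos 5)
Your scaffolding is right — backward integration of the normal form $V'=F^-$, $p'=G^-$ from $x^*$ with data $(B,\theta(x^*))$, the a priori bounds $V\in[0,B]$ increasing and $p\in[\theta_{\min},1]$ decreasing, the non-devaluating Euler-type analysis near $x=0$, and the verification computation for Definition \ref{def:eqsol} at the end. But the handling of the obstruction at a point $\hat x$ where $rV(\hat x)=H^{\max}(\hat x,p(\hat x))$ contains the real difficulty of the theorem, and your treatment of it is wrong in two ways.

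First, the ``compatibility condition'' $p(\hat x)=p_c(\hat x)$ that you expect the backward shooting to deliver automatically does not hold. The paper's Lemma \ref{lemma:compW} gives only the one-sided inequality
\[
\limsup_{x\to\hat x^+}q(x)\;\le\;p_c(\hat x),
\]
and equality does not hold in general. Hence the numerator of $G^{\pm}$ in \eqref{eq:pnormal} does \emph{not} vanish as $x\to\hat x^+$, $p'$ genuinely blows up, and the $C^1$ solution cannot be continued through $\hat x$. Second, the remedy is \emph{not} to switch to the $F^+$ branch. Doing so would produce a value function whose optimal feedback drives $\dot x<0$ for $x<\hat x$, which is not what the dynamic-programming analysis selects. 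The paper's construction keeps the $F^-$ branch but allows $p^*$ to \emph{jump}: at the touching point $x_1\doteq x^*_W$ one restarts the backward ODE \eqref{eq:V-x0} with the new terminal data $(W(x_1),p_c(x_1))$ and again integrates $V'=F^-$, $p'=G^-$. The resulting $p^*$ has a downward jump at $x_1$ (equivalently, an upward jump as one moves leftward into $[0,x_1[$), which is why Definition \ref{def:eqsol} only asks for $p^*$ l.s.c. The economic content of the jump is that for initial DTI below $x_1$ the optimal trajectory converges asymptotically to $x_1$ and never defaults, so lenders attach a higher bond price there — a feature your compatibility argument would erase.

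You are also missing the inductive nature of the construction and the termination argument. After the restart from $(W(x_1),p_c(x_1))$ the backward solution may touch the graph of $W$ again at some $x_2<x_1$, forcing another restart, and so on. The whole point of Lemma \ref{lemma:bw} is a quantitative lower bound $x_n-a(x_n)\ge\delta_{x^\flat}$ (built from the H\"older estimate of Lemma \ref{lemma:holder} and the slope gap $\xi^\sharp(x,p_c(x))-W'(x)>0$ from Proposition \ref{prop:nondev}) which guarantees that the sequence $x_1>x_2>\cdots$ drops below $x^\flat$ in finitely many steps; below $x^\flat$ one is in the non-devaluating regime, $q\equiv 1$, $v^*\equiv 0$, and a single backward solution of \eqref{eq:odep=1} reaches the origin with $Z(0)=0$. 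The final $(V^*,p^*)$ is a \emph{finite concatenation} of backward $F^-$-solutions glued across $W$-touching points, not a single solution switching branches. Without the concatenation and the quantitative step estimate your argument does not produce a solution on all of $[0,x^*]$ and does not establish the piecewise-Lipschitz regularity of the feedbacks required by Definition \ref{def:eqsol}.
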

Toward the proof of this theorem, we study  basic properties of the backward solutions of the  system of implicit ODEs \eqref{eq:Sdode}. In fact, an equilibrium solution will be constructed by a suitable concatenation of backward solutions.

\subsubsection{Backward solutions}  We first define the backward solution to the system \eqref{eq:Sdode} starting from $x^*$.
%
\begin{definition}[Backward solution for $x^*$]
Let $x\mapsto (Z(x,x^*),q(x,x^*))$ be the backward solution of the system of ODEs
\begin{equation}\label{eq:VB}
\begin{cases}Z'(x)&=~F^{-}(x,Z(x),q(x)),\\[4mm] q'(x)&=~G^{-}(x,Z(x),q(x)), \end{cases}
\qquad\textrm{with}\qquad\begin{cases}Z(x^*)&=~B\,,\\[4mm] q(x^*)&=~\theta(x^*).\end{cases}
\end{equation}
with $H_\xi(x,F^{-}(x,Z(x),q(x)),q(x))\ne 0$.
\end{definition}

\par\medskip\par

The following Lemma states some basic properties of the backward solution. In particular, the backward solution $Z(\cdot,x^*)$, starting from $B$ at $x^*$ with $W(x^*)<B$, 
survives backward at least  until the first intersection with the graph of $W(\cdot)$. Moreover, in this interval is monotone increasing and positive. In the same way, $q(\cdot,x^*)$ 
is always in $]0,1]$.

\begin{proposition}\label{prop:pro1}[Basic properties of the backward solution]
Set
\[
x^*_{W}:=~\begin{cases}0,&\textrm{ if }Z(x,x^*)<W(x)\textrm{ for all }x\in]0,x^*[,\\
\sup\{x\in ]0,x^*[:\, Z(x,x^*)\ge W(x)\},&\textrm{ otherwise }.\end{cases}
\]
Assume that 
\begin{equation}\label{eq:Asp1}
W(x^*)>B\qquad\textrm{ and }\qquad \theta(x^*)<\dfrac{r+\lambda}{r+\lambda+v^*(x^*,F^-(x^*,B,\theta(x^*))) }\,.
\end{equation}
Denote by $I_{x^*}\subseteq [0,x^*]$ the maximal domain of the backward equation \eqref{eq:VB}, 
define $y(x)$ to be the maximal solution of
\[\begin{cases}
\dfrac{dy}{dx}(x)=\dfrac{1}{H_{\xi}\left(x, Z'(x,x^*),q(x,x^*)\right)},\\ \\ y(x^*)=0,   
\end{cases}\]
and let $J_{x^*}$ the intersection of its domain with $[0,x^*]$. 
Then
\begin{enumerate}
\item $I_{x^*}\supseteq J_{x^*}\supseteq ]x^*_{W},x^*[$;
\item $Z(\cdot,x^*)$ is strictly monotone increasing in $]x^*_{W},x^*[$, and $Z(x,x^*)>0$ for all $x\in ]x^*_{W},x^*]$;
\item $q(x,x^*)\in ]0,1]$ for all $x\in ]x^*_{W},x^*]$. 
\end{enumerate}
\end{proposition}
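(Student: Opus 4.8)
The plan is to realize $(Z(\cdot,x^*),q(\cdot,x^*))$ as the maximal backward solution of \eqref{eq:VB} and to confine it to the open region
\[
\Omega\ \doteq\ \Big\{(x,\eta,p)\ :\ 0<x\le x^*,\ \eta>0,\ 0<p\le 1,\ r\eta<H^{\max}(x,p)\Big\},
\]
on which, by Lemma \ref{lemma:holder} and \eqref{eq:pnormal}, the right-hand side of \eqref{eq:VB} is continuous and $H_\xi(x,F^-(x,\eta,p),p)>0$ is locally bounded away from $0$. Since \eqref{eq:Asp1} presupposes that $F^-(x^*,B,\theta(x^*))$ is defined, i.e. $rB\le H^{\max}(x^*,\theta(x^*))$, and since $W(x^*)>B$ together with Lemma \ref{lemma:nonincmax}(4) and $\theta(x^*)\le p_c(x^*)$ (cf. \eqref{eq:x-large}) give $rB<H^{\max}(x^*,\theta(x^*))$, the terminal point lies in $\Omega$ and a backward solution exists on a left-neighbourhood of $x^*$; call $I_{x^*}=\,]a,x^*]$ its maximal interval. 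By the continuation principle, if $a>0$ then $(x,Z(x),q(x))$ tends to $\partial\Omega$ as $x\downarrow a$, hence one of $Z(a^+)=0$, $q(a^+)=0$, $rZ(a^+)=H^{\max}(a,q(a^+))$ must hold (reaching the face $p=1$ is harmless, as seen below). Items (1)--(3) follow once the first two possibilities are ruled out and the third is shown to occur only at $x^*_W$.

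For the monotonicity and positivity of $Z$: $Z(x^*)=B>0$, and wherever $Z>0$ one has $Z'=F^-(x,Z,q)\in\,]0,\xi^\sharp(x,q)[$ by Lemma \ref{lemma:nonincmax}(1), so $Z$ is strictly increasing and $Z\le B$; positivity persists because for $x\eta$ small the explicit identity $F^-(x,\eta,p)=\dfrac{pr\eta}{(\lambda+r-p(\lambda+\mu))x}$, whose denominator is $\ge(r-\mu)\,a>0$, gives $Z'\le C\,Z$ with $C=C(a)$, so a backward Grönwall estimate yields $Z(x)\ge B\,e^{-C(x^*-x)}>0$. This proves (2) on $I_{x^*}$ and excludes $Z(a^+)=0$. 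For (3) I compare $q$ with the constant barriers $1$ and $0$: by \eqref{eq:pnormal}, $G^-(x,Z,1)=v^*(x,F^-(x,Z,1))/H_\xi(x,F^-(x,Z,1),1)\ge 0$ and $G^-(x,Z,0)=-(r+\lambda)/H_\xi(x,F^-(x,Z,0),0)<0$, the denominators being positive by Lemma \ref{lemma:nonincmax}(1); thus the vector field points weakly inward along $q=1$ and strictly inward along $q=0$, and since $q(x^*)=\theta(x^*)\in\,]0,1[$ with $q'(x^*)=G^-(x^*,B,\theta(x^*))<0$ by \eqref{eq:Asp1}, a comparison argument confines $q$ to $]0,1]$ on $I_{x^*}$, proving (3) and excluding $q(a^+)=0$. (The one subtle point is a tangential contact with $q=1$ where $v^*$ vanishes; this can be handled by perturbing the barrier to $1+\varepsilon$, or by noting that $Z'>0$ there still pushes $q$ back below $1$.)

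It remains to prove (1), i.e. $a\le x^*_W$. Suppose $a>x^*_W$; then, by definition of $x^*_W$, $Z(x)<W(x)$ for all $x\in\,]a,x^*[$, so $rZ(x)<rW(x)=H^{\max}(x,p_c(x))$ there. The key claim is that $q(x,x^*)\le p_c(x)$ for all $x\in I_{x^*}$. Granting it, Lemma \ref{lemma:nonincmax}(4) gives $rZ(x)<H^{\max}(x,p_c(x))\le H^{\max}(x,q(x))$ on $]a,x^*[$; letting $x\downarrow a$, $rZ(a^+)\le H^{\max}(a,p_c(a))\le H^{\max}(a,q(a^+))$, and since exiting $\Omega$ at $a>0$ forces $rZ(a^+)=H^{\max}(a,q(a^+))$, all these are equalities, in particular $Z(a^+)=W(a)$, so $a$ belongs to the closure of $\{x:\ Z(x)\ge W(x)\}$ and $a\le x^*_W$ --- a contradiction. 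Hence $I_{x^*}\supseteq\,]x^*_W,x^*]$. Finally, on each $[x^*_W+\delta,x^*]$ the gap $W-Z$ is bounded below (strict inequality plus compactness, and near $x^*$ one uses $rB<H^{\max}(x^*,\theta(x^*))$), so $H_\xi(x,F^-(x,Z,q),q)$ is bounded away from $0$ there and the reparametrisation $y$ stays finite: $J_{x^*}\supseteq\,]x^*_W,x^*[$; the inclusion $I_{x^*}\supseteq J_{x^*}$ is immediate, $y$ being defined wherever $(Z,q)$ is.

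The main obstacle is the claim $q\le p_c$, which I would prove by showing $p_c$ is a supersolution of the backward $q$-equation on $\{Z<W\}$. If $q>p_c$ somewhere below $x^*$, then since $q(x^*)=\theta(x^*)\le p_c(x^*)$ there is a last contact point $x_1<x^*$ with $q(x_1)=p_c(x_1)$ and $q<p_c$ on $]x_1,x^*]$, whence $q'(x_1)\le p_c'(x_1)$. Now $q'(x_1)=G^-(x_1,Z(x_1),p_c(x_1))$; since $Z(x_1)\le W(x_1)$ we have $F^-(x_1,Z(x_1),p_c(x_1))\le\xi^\sharp(x_1,p_c(x_1))$, and the identity $v^\sharp(x,p_c(x))=v_c(x)$ --- obtained by combining \eqref{eq:Rmax}--\eqref{eq:Xisharp} with the expression for $\xi^\sharp(x,p_c(x))$ and the definition of $v_c$ in Proposition \ref{prop:nondev} --- together with the monotonicity of $v^*(x_1,\cdot)$ yields $v^*(x_1,F^-(x_1,Z(x_1),p_c(x_1)))\le v_c(x_1)$, hence $G^-(x_1,Z(x_1),p_c(x_1))\le 0$. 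The genuinely delicate step is the \emph{strict} inequality $q'(x_1)>p_c'(x_1)$: this amounts to quantifying $G^-(x_1,Z(x_1),p_c(x_1))$ against the explicit slope $p_c'(x_1)=-\dfrac{p_c(x_1)^2}{r+\lambda}\,v_c'(x_1)$, which I expect to require the strict convexity bounds $L''\ge\delta_0$, $c''\ge\delta_0$ of \textbf{(A1)}--\textbf{(A2)} and the implicit equation defining $v_c$. This is where the bulk of the work lies.
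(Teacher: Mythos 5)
Your treatment of items (2) and (3) is broadly consonant with the paper, though the paper proceeds more cleanly by reparametrising along $y$ (where $dy/dx = 1/H_\xi$), turning the $q$-equation into $dq/dy=(r+\lambda+v^*)q-(r+\lambda)$ and reading off the bounds $\theta(x^*)e^{(r+\lambda+v^*)y}\le q\le 1$ and $Z\ge Be^{ry}$ by Gr\"onwall; your barrier comparison at $q=0$ is not literally available since $F^-(x,\eta,0)$ and $G^-(x,\eta,0)$ are undefined at $p=0$, and your barrier at $q=1$ has the tangential case $v^*=0$ that you acknowledge but don't resolve, whereas the $y$-estimate disposes of both at once.

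The genuine gap is in item (1). Your route hinges on the global claim $q(x,x^*)\le p_c(x)$ along the entire backward orbit, which you reduce to showing that $p_c$ is a strict supersolution at the last contact point $x_1$, i.e. $G^-(x_1,Z(x_1),p_c(x_1))>p_c'(x_1)$. You only establish the non-strict sign $G^-(x_1,\cdot,\cdot)\le 0$, and you explicitly flag that the strict comparison with $p_c'$ ``is where the bulk of the work lies'' and leave it undone. As written, this is a missing step: both quantities are nonpositive and the strict inequality is exactly what the argument needs, yet nothing in the proposal quantifies it. There is also a secondary issue in the same claim: you invoke the contact-point monotonicity using $q(x^*)=\theta(x^*)\le p_c(x^*)$, but the hypotheses \eqref{eq:Asp1} do not immediately rule out equality, so the ``last contact point $x_1<x^*$'' may not exist without a further observation.

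The paper avoids all of this. It does \emph{not} prove $q\le p_c$ along the trajectory. Instead it first shows, by examining the sign of $q'$ and ruling out interior critical points of $q$ (using \eqref{eq:p''} when $x_1Z'(x_1)>c'(0)$, and a direct identity $q\equiv 1$ when $x_1Z'(x_1)\le c'(0)$), that $q(\cdot,x^*)$ is non-increasing on $J_{x^*}\cap]x^*_W,x^*[$. Then it supposes $H_\xi(x_2,Z'(x_2),q(x_2))=0$ at some $x_2$ and derives, directly from $\xi^\sharp(x_2,q(x_2))=Z'(x_2)$ and the formula \eqref{eq:Hmax}, that $rZ(x_2)=H^{\max}(x_2,q(x_2))\ge H^{\max}(x_2,p_c(x_2))=rW(x_2)$, contradicting $x_2>x^*_W$. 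In other words, the comparison between $q$ and $p_c$ is needed only \emph{pointwise at the hypothetical critical point}, not globally, and is obtained from the admissibility constraint $u^\sharp\ge 0$ rather than a supersolution/comparison argument for the full $q$-ODE. If you want to rescue your route, the clean fix is to abandon the global claim $q\le p_c$ and argue exactly as the paper does at the critical point; otherwise you must actually carry out the strict supersolution estimate you postponed, using the convexity lower bounds $L''\ge\delta_0$, $c''\ge\delta_0$, and this is substantially more work.
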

\begin{proof}$ $\par
\textbf{1.} We first claim that $q(\cdot,x^*)$ is non-increasing on $J_{x^*}\bigcap ]x^*_{W},x^*[$ and thus
\begin{equation}\label{eq:de-p}
q'(x,x^*)=~\dfrac{[r+\lambda+v^*(x,Z'(x,x^*))]\cdot q(x,x^*)-(r+\lambda)}{H_{\xi}(x,Z'(x,x^*),q(x,x^*))}~\le~0, \textrm{ for all }x\in J_{x^*}\cap ]x^*_{W},x^*[\,.
\end{equation}
By contradiction, assume that there exists $x_1\in J_B\cap ]x_{BW},x^*[$  such that 
\begin{equation}\label{eq:as1}
q'(x_1,x^*)=\dfrac{[r+\lambda+v^*(x_1,Z'(x_1,x^*))]\cdot q(x_1,x^*)-(r+\lambda)}{H_{\xi}(x_1,Z'(x,x^*),q(x,x^*))}=0, \textrm{ and }q''(x_1,x^*)<0\,.
\end{equation}
This yields 
\[r+\lambda=[r+\lambda+v^*(x_1,Z'(x_1,x^*))]\cdot q(x_1,x^*)\textrm{ and } q(x_1,x^*)>0.\]
Two cases are considered:
\begin{itemize}
\item if $x_1Z'(x_1,x^*)\leq c'(0)$ then, recalling the monotonicity of $Z'(\cdot,x^*)$, we have that $xV'(x,x^*)\leq c'(0)$ for all $x\in J_{x^*}\cap ]x^*_{W},x^*[$ satisfying $x\le x_1$, and so
\[v^*(x,Z'(x,x^*))=0, \textrm{ for all }x\in J_{x^*}\cap ]x^*_{W},x^*~[\textrm{with}~x\le x_1.\]
Thus, $q(x_1,x^*)=1$ and 
\[
q'(x,x^*)=\dfrac{[r+\lambda]\cdot [q(x,x^*)-1]}{H_{\xi}(x,Z'(x,x^*),q(x,x^*))}\quad\forall x\in J_{x^*}\cap ]x^*_{W},x^*[~\textrm{with}~x\le x_1.
\]
This implies that  $q(x,x^*)=1$ for all $x\in J_{x^*}\cap ]x^*_{W},x^*[$ with $x\le x_1$. In particular, we have $q''(x_1,x^*)=0$, which yields a contradiction.
\item If $x_1Z'(x_1,x^*)>c'(0)$ then 
\[\dfrac{d}{dx}(v^*(x_1,Z'(x_1,x^*)))=\dfrac{Z''(x_1,x^*)x_1+Z'(x_1,x^*)}{c''(x_1Z'(x_1,x^*))}>0.\]
From the first equation of \eqref{eq:Sdode} and \eqref{eq:gradH}, it holds
\begin{align*}
rZ'(x_1,x^*)&=~H_x(x_1,Z',q)+H_{\xi}(x_1,Z',q)\cdot Z''(x_1,x^*)+H_{p}(x_1,Z,q)\cdot q'(x_1,x^*)\\
&=~\Big[(\lambda+r)-q(x_1,x^*)(\lambda +\mu +v^*(x,Z')\Big]\cdot \frac{Z'}{q}+H_{\xi}(x_1,Z',q)\cdot Z''(x_1,x^*)\\
&=~(r-\mu)\cdot Z'(x_1,x^*)+H_{\xi}(x_1,Z',q)\cdot Z''(x_1,x^*).
\end{align*}
Observe that  $Z'(x_1,x^*)>0$ and $H_{\xi}(x_1,Z'(x_1,x^*),q(x_1,x^*))>0$,  one obtains that 
\[
Z''(x_1,x^*)=\dfrac{\mu Z'(x_1,x^*)}{H_{\xi}(x_1,Z'(x_1,x^*),q(x_1,x^*))}>0\,.
\]
Taking the derivative respect to $x$ in both sides of the second equation of \eqref{eq:Sdode}, we have 
\begin{multline*}
\left [r+\lambda+(v^*(x,Z'(x,x^*))\right]\cdot q'(x,x^*)+q(x,x^*)\cdot\dfrac{d}{dx}v^*(x,Z'(x,x^*))\\
=q''(x,x^*)H_{\xi}(x,Z'(x,x^*),q(x,x^*))+q'(x,x^*)\dfrac{d}{dx}H_{\xi}(x,Z'(x,x^*),q(x,x^*))\,.
\end{multline*}
Recalling \eqref{eq:as1}, we obtain that 
\begin{equation}\label{eq:p''}
q''(x_1,x^*)=\dfrac{q(x_1,x^*)}{H_{\xi}(x_1,Z',q)}\cdot \dfrac{d}{dx}v^*(x_1,Z'(x_1,x^*))>0.
\end{equation}
and it yields a contradiction.
\end{itemize}
\par\medskip\par

Now assume that there exists $x_2\in J_{x^*}\cap ]x^*_{W},x^*[$ such that $H_{\xi}(x_2,Z'(x_2,x^*),q(x_2,x^*))=0$. One has that 
\[
\xi^{\sharp}(x_2,q(x_2,x^*))=Z'(x_2,x^*)\qquad\textrm{ and }\qquad Z(x_2,x^*)=\dfrac{1}{r}\cdot H^{\max}(x_2,q(x_2,x^*)).
\]
Moreover, 
\[
u^{\sharp}(x_2,q(x_2,x^*))=\big[(\lambda+r)-(\lambda+\mu+v^{\sharp}(x_2,q(x_2,x^*)))q(x_2,x^*)\big]\cdot x_2\,.
\]
On the other hand, since $q(x_2,x^*)\leq \dfrac{r+\lambda}{r+\lambda+v^{\sharp}(x,Z'(x_2,x^*))}$, we estimate
\begin{align*}
H^{\max}&(x_2,q(x_2,x^*))=L(u^{\sharp}(x_2,q(x_2,x^*)))+c(v^{\sharp}(x_2,q(x_2,x^*)))\\
&=~L\left(\big[(\lambda+r)-(\lambda+\mu+v^{\sharp}(x_2,q(x_2,x^*)))q(x_2,x^*)\big]\cdot x_2\right)+c(v^{\sharp}(x_2,q(x_2,x^*)))\\
&\geq~L\left(\dfrac{r+\lambda(r-\mu)x_2}{\lambda+\mu+v^{\sharp}(x_2,q(x_2,x^*))}\right)+c(v^{\sharp}(x_2,q(x_2,x^*)))\\
&\geq~H^{\max}(x_1,p_c(x_2)).
\end{align*}
Thus, 
\[
Z(x_2,x^*)=\dfrac{1}{r}\cdot H^{\max}(x_2,q(x_2,x^*))\geq\dfrac{1}{r}\cdot H^{\max}(x_2,p_c(x_2))=W(x_2),
\]
and it yields a contradiction.\par\medskip\par
\textbf{2.} By construction, $y(\cdot)$ is strictly monotone and invertible in $]x^*_{W},x^*]$, let $x=x(y)$ be its inverse, from the inverse function theorem we get
\[\begin{cases}
\dfrac{d}{dy}~Z(x(y),x^*)=Z'(x(y),x^*)\cdot H_{\xi}\left(x(y),Z'(x(y),x^*),q(x(y),x^*)\right),\\ \\
\dfrac{d}{dy}~q(x(y),x^*)=q'(x(y),x^*)\cdot H_{\xi}\left(x(y),Z'(x(y),x^*),q(x(y),x^*)\right).
\end{cases}\]
Since the map $\xi\mapsto H(x,\xi,q)$ is concave, it holds
\[H_{\xi}(x,0,q(x,x^*))~\ge~H_{\xi}(x,\xi,q(x,x^*))~\ge~H_{\xi}\left(x,Z'(x,x^*),q(x,x^*)\right),\]
for all $\xi\in \left[0,Z'(x,x^*)\right]$.
We have
\begin{align*}
r Z(x(y),x^*)&=~H\left(x(y), Z'(x(y),x^*),q(x(y),x^*)\right)\\
&\qquad\qquad=~\int_0^{Z'(x(y),x^*)}H_{\xi}(x,\xi,q(x(y),x^*))\,d\xi\\
&\ge~Z'(x(y),x^*)\cdot H_{\xi}(x,Z(x(y),x^*),q(x(y),x^*))=\dfrac{d}{dy}Z(x(y),x^*)
\end{align*}
and this yields $Z(x,x^*)\ge Be^{ry(x)}>0$ for all $x\in ]x^*_{W},x^*]$.
\par\medskip\par 

With a similar argument for $q(\cdot,x^*)$, we obtain
\[
(r+\lambda+v^*(x(y),Z'(x(y),x^*))\cdot q(x(y),x^*)-(r+\lambda)=\dfrac{d}{dy}q(x(y),x^*)),
\]
and so 
\[(r+\lambda)(q(x(y),x^*)-1)~\le~\dfrac{d}{dy}q(x(y),x^*)~\le~ (r+\lambda+v^*(x(y),Z'(x(y),x^*))\cdot q(x(y),x^*),
\]
which in particular implies that for all $x\in I_{x^*}\cap[0,x^*]$
\[
q(x,x^*)~\le ~1,\hspace{1cm} q(x,x^*)~\ge~\theta(x^*)\cdot e^{(r+\lambda+v^*(x,Z'(x,x^*))y(x)}>0,
\]
and so $q(x,x^*)\in ]0,1]$ for all $x\in ]x^*_{W},x^*]$.
\end{proof}
\par\medskip\par
As far as the graph of $Z(\cdot,x^*)$ intersects the graph of $W(\cdot)$, we have that $Z(\cdot,x^*)$ is no longer optimal.
We investigate now the local behavior of $Z(\cdot,x^*)$ and $W(\cdot)$ near to an intersection of their graphs.

\begin{lemma}[Comparison between optimal constant strategy and backward solution]\label{lemma:compW}
Let $I\subseteq ]0,x^*[$ be an open interval, $(Z,q):I\to [0,+\infty[\times]0,1[$ be a backward solution, and $\bar{x}\in \bar I$. 
Assume that 
\[
\lim_{\substack{x\to \bar x\\ x\in I}}Z(x)=W(\bar x).
\]
Then $p_c(\bar x)\ge \displaystyle\limsup_{\substack{x\to\bar x\\ x\in I}} q(x)$ and $W'(x)< F^{-}(x,W(x),p_c(x))$.
\end{lemma}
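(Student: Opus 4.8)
The plan is to prove the two assertions separately: the bound on $\limsup q$ by a contradiction argument resting on the maximality of $H^{\max}$, and the inequality for $W'$ as an immediate consequence of Proposition \ref{prop:nondev}.

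\emph{The estimate $p_c(\bar x)\ge\limsup q$.} Put $\bar q\doteq\limsup_{x\to\bar x,\,x\in I}q(x)$; since $q$ takes values in $]0,1[$ we have $\bar q\in[0,1]$, and if $\bar q=0$ there is nothing to prove because $p_c(\bar x)>0$ (one has $v_c(\bar x)<+\infty$, so $p_c(\bar x)=(r+\lambda)/(r+\lambda+v_c(\bar x))>0$). Assume, for a contradiction, that $p_c(\bar x)<\bar q$, and choose $x_n\to\bar x$ in $I$ with $q(x_n)\to\bar q$. Being a backward solution, $(Z,q)$ satisfies $Z'(x_n)=F^{-}(x_n,Z(x_n),q(x_n))$, so by the very definition of $F^{\pm}$ in Lemma \ref{lemma:nonincmax} we get $rZ(x_n)=H\big(x_n,Z'(x_n),q(x_n)\big)\le H^{\max}(x_n,q(x_n))$. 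By Lemma \ref{lemma:nonincmax}(4) the map $p\mapsto H^{\max}(\bar x,p)$ is strictly decreasing on $]0,1]$, hence $H^{\max}(\bar x,\bar q)<H^{\max}(\bar x,p_c(\bar x))=rW(\bar x)<+\infty$ (the last inequality since $W$ is real-valued by Proposition \ref{prop:nondev}); in particular $\bar q>0$ and $(\bar x,\bar q)$ lies in the region where $u^{\sharp}<1$ and $v^{\sharp}<v_{\max}$. Using $H^{\max}=L(u^{\sharp})+c(v^{\sharp})$ (cf. \eqref{eq:Hmax}) together with the continuity of $(x,p)\mapsto\xi^{\sharp}(x,p)$, and hence of $u^{\sharp}$ and $v^{\sharp}$, in a neighbourhood of $(\bar x,\bar q)$ (where $L$ and $c$ are continuous), we obtain $H^{\max}(x_n,q(x_n))\to H^{\max}(\bar x,\bar q)$. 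Therefore $\limsup_n rZ(x_n)\le H^{\max}(\bar x,\bar q)<rW(\bar x)$, contradicting $Z(x_n)\to W(\bar x)$. Hence $\bar q\le p_c(\bar x)$.

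\emph{The inequality $W'(x)<F^{-}(x,W(x),p_c(x))$.} By Definition \ref{def:optconst} one has $rW(x)=H^{\max}(x,p_c(x))$ for all $x\in\,]0,x^*[$, so the first equation of \eqref{eq:Sdode} at $(V,p)=(W(x),p_c(x))$ is the borderline case $r\eta=H^{\max}$ of Lemma \ref{lemma:nonincmax}; by the extension convention in item (2) of that lemma this yields $F^{-}(x,W(x),p_c(x))=\xi^{\sharp}(x,p_c(x))$. The claim is then exactly the strict inequality $W'(x)<\xi^{\sharp}(x,p_c(x))$ recorded in \eqref{eq:k-slop} of Proposition \ref{prop:nondev}.

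The main obstacle is the passage to the limit $H^{\max}(x_n,q(x_n))\to H^{\max}(\bar x,\bar q)$, i.e. the continuity of $H^{\max}$ at $(\bar x,\bar q)$. This is precisely where the contradiction hypothesis $p_c(\bar x)<\bar q$ is essential: via Lemma \ref{lemma:nonincmax}(4) it forces $H^{\max}(\bar x,\bar q)$ to be finite, which confines $(\bar x,\bar q)$ to the open region where $u^{\sharp}<1$ and $v^{\sharp}<v_{\max}$; there $\xi^{\sharp}$ is singled out by $H_{\xi}(x,\xi,p)=0$ with $H_{\xi\xi}<0$ (using the convexity bounds $L''\ge\delta_0$, $c''\ge\delta_0$), so the implicit function theorem gives local $C^1$, in particular continuous, dependence of $\xi^{\sharp}$ — and then of $u^{\sharp}=u^{*}(\xi^{\sharp},p)$, $v^{\sharp}=v^{*}(x,\xi^{\sharp})$ and of $H^{\max}$ — on $(x,p)$.
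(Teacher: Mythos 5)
Your proof is correct and follows essentially the same route as the paper: both rest on the identity $rW(\bar x)=H^{\max}(\bar x,p_c(\bar x))$, the bound $rZ(x_j)=H(x_j,Z'(x_j),q(x_j))\le H^{\max}(x_j,q(x_j))$ along the backward solution, the strict monotonicity of $p\mapsto H^{\max}(x,p)$ from Lemma \ref{lemma:nonincmax}(4), and the inequality \eqref{eq:k-slop} from Proposition \ref{prop:nondev}. The only (cosmetic) differences are that the paper proves the first assertion directly rather than by contradiction, and obtains the second by applying $F^-$ to the inequality $H(\bar x,W'(\bar x),p_c(\bar x))<rW(\bar x)$ instead of invoking the extension convention $F^-(x,W(x),p_c(x))=\xi^\sharp(x,p_c(x))$ as you do; your closing remark also usefully isolates the continuity of $H^{\max}$, which the paper uses silently in passing to the limit.
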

\begin{proof}
Let $\{x_j\}_{j\in\mathbb N}\subseteq I$ be a sequence converging to $\bar x$ and $q_{\bar x}\in [0,1]$ be such that $\displaystyle q_{\bar x}=\limsup_{x\to\bar x^+} q(x)=\lim_{j\to \infty}q(x_j)$.
By assumption, we have 
\[H^{\max}(x,p_c(x))=\lim_{j\to +\infty} H\left(x_j,Z'(x_j),q(x_j)\right)\le \lim_{j\to +\infty} H^{\max}(x_j,q(x_j))=H^{\max}(\bar x,q_{\bar x}).\]
Recalling Lemma \ref{lemma:nonincmax} (4), we have $p_c(\bar x)\ge q_{\bar x}$. By Proposition \ref{prop:nondev}, we have $W'(\bar x)<\xi^\sharp(\bar x,p_c(\bar x))$, and so
\[H(\bar x,W'(\bar x),p_c(\bar x))<H^{\max}(\bar x,p_c(\bar x))=rW(\bar x),\] 
thus, by applying the strictly increasing map $F^{-}(\bar x,\cdot,p_c(x))$ on both sides, we obtain $W'(x)< F^{-}(x,W(x),p_c(x))$.
\end{proof}
\par\medskip\par
\par
Since, the functions $F^-(x,Z,q)$ and $G^{-}(x,Z,q)$ are smooth for $H_{\xi}(x,Z,q)\neq 0$ but not only H\"{o}lder continuous with respect to $Z$ near to the surface
\[
\Sigma=\left\{(x,Z,q)~|~H_{\xi}(x,Z,q)=0\right\}\,.
\]
Thus, for any $x_0\in [0,x^*)$,  the definition of the solution of  the Cauchy problem 
\begin{equation}\label{eq:V-x0}
\begin{cases}Z'(x)&=~F^{-}(x,Z(x),q(x)),\\[4mm] q'(x)&=~G^{-}(x,Z(x),q(x)), \end{cases}
\qquad\textrm{with}\qquad\begin{cases}Z(x_0)&=~W(x_0)\,,\\[4mm] q(x_0)&=~p_c(x_0).\end{cases}
\end{equation}
requires some care. 
\par\medskip\par
\par
For any $\varepsilon>0$, we denote by $Z_{\varepsilon}(\cdot,x_0),q_{\varepsilon}(\cdot,x_0)$ the backward solution to \eqref{eq:V-x0} with the terminal data
\[
Z_{\varepsilon}(x_0,x_0)=W(y_0)-\varepsilon\qquad\textrm{ and }\qquad q_{\varepsilon}(x_0,x_0)=p_c(x_0).
\]
With the same argument in the proof of Proposition \ref{prop:pro1}, this solution is uniquely defined on a maximal interval $[a_{\varepsilon}(x_0),x_0]$ such that $Z_{\varepsilon}(\cdot,x_0)$ is increasing, $q_{\varepsilon}(\cdot,x_0)$ is decreasing and 
\[
Z_{\varepsilon}(a_{\varepsilon}(x_0),x_0)=W(a_{\varepsilon}(x_0)),\qquad \qquad q_{\varepsilon}(a_{\varepsilon}(x_0),x_0)\le p_{c}(a_{\varepsilon}(x_0)).
\]
Let $x^{\flat}$ be the unique solution to the equation
\begin{equation}\label{eq:x-flat}
c'(0)=x\cdot L'((r-\mu)x)\,.
\end{equation}
It is clear that $0<x^{\flat}<x_c$ where $x_c$ is defined in Proposition \ref{prop:nondev} as the unique solution to the equaiton
\[
(r+\lambda)c'(0)=(r-\mu)xL'\left((r-\mu)x\right).
\]
Two cases are considered:
\begin{itemize}
\item \textbf{CASE 1:} For any $x_0\in ]0,x^{\flat}]$, we claim that  
\[
a_{\varepsilon}(x_0)=0,\qquad q_{\varepsilon}(x,x_0)=1\qquad\forall x\in [0,x_0]\,,
\]
and $Z_{\varepsilon}(\cdot,x_0)$ solves backward the following ODE  
\begin{equation}\label{eq:odep=1}
Z'(x)=F^-(x,Z(x),1),\qquad Z(x_0)=W(x_0)-\varepsilon
\end{equation}
for $\varepsilon>0$ sufficiently small. Indeed, let $Z_1$ be the unique backward solution of \eqref{eq:odep=1}. From \eqref{eq:Xisharp}, it holds
\[
F^-(x,W(x),1)=\xi^{\sharp}(x,1)=L'((r-\mu)x)>\dfrac{r-\mu}{r}\cdot L'((r-\mu)x)=W'(x)
\]
for all $x\in ]0,x^{\flat}]$. As in \cite{BMNP}, a contradiction argument yields
\[
0<Z_1(x)<W(x)\qquad\forall x\in ]0,x_0]\,.
\]
Thus, $Z_1$ is well-defined on $[0,x_0]$ and $Z_1(0)=0$. On the other hand, it holds
\[
Z'(x_1)=F^-(x,Z(x),1)\le \xi^{\sharp}(x,1)=L'((r-\mu)x)\le L'((r-\mu)x^{\flat})
\]
for all $x\leq x^{\flat}$ and \eqref{eq:x-flat} implies that 
\[
v^*(x,Z'_1(x))=0\qquad\forall x\in [0,x^{\flat}]\,.
\]
Therefore, $(Z_1(x),1)$ solves \eqref{eq:V-x0} and the uniqueness yields 
\[
Z_{\varepsilon}(x,x_0)=Z_1(x)\qquad\textrm{ and }\qquad q_{\varepsilon}(x,x_0)=1\qquad\forall x\in [0,x_0]\,.
\] 
Thanks to the monotone increasing property of the map $\xi\to F^-(x,\xi,1)$, a pair $(Z(\cdot,x_0),q(\cdot,x_0))$ denoted by 
\[
q(x,x_0)=1\qquad\textrm{ and }\qquad Z(x,x_0)=\sup_{\varepsilon>0}Z_{\varepsilon}(x,x_0)\qquad\forall x\in [0,x_0]
\]
is the unique solution of \eqref{eq:V-x0}. If the initial size of the debt is $\bar{x}\in [0,x_0]$ we think of $Z(\bar{x},x_0)$ is as the expected cost of 
\eqref{eq:cost-feed}-\eqref{eq:cont-feed} with $p(\cdot,x_0)=1$, $x(0)=x_0$ achieved by the feedback strategies
\begin{equation}\label{eq:u-1xx}
u(x,x_0)=\underset{w\in [0,1]}{\mathrm{argmin}}\left\{ L(w) - \, Z'(x,x_0)\cdot w\right\},\quad v(x,x_0)=0
\end{equation}
for all $x\in [0,x_0]$. With this strategy, the debt has the asymptotic behavior $x(t)\to x_0$ as $t\to\infty$.
\par\medskip\par
\item \textbf{CASE 2:} For $x_0\in (x^{\flat},x_W^*]$,  system of ODEs \eqref{eq:V-x0} does not admit a unique solution in general since it is not monotone, it. 
The following lemma will provide the existence result of \eqref{eq:V-x0} for all $x_0\in (x^{\flat},x^*_{W}]$.
\begin{lemma}\label{lemma:bw} There exists a constant $\delta_{\flat}>0$ depending only on $x^{\flat}$ such that  for any $x_0\in \left(x^{\flat},x^*_W\right)$, it holds  
\[
x_0-a_{\varepsilon}(x_0)\ge \delta_{x^\flat}\qquad\forall \varepsilon\in (0,\varepsilon_0)
\]
for some $\varepsilon_0>0$ sufficiently small.
\end{lemma}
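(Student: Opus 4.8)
The plan is to quantify the following heuristic: the backward solution $(Z_\varepsilon(\cdot,x_0),q_\varepsilon(\cdot,x_0))$ starts $\varepsilon$ below the graph of $W$ at $x_0\in(x^\flat,x^*_W)$ and is \emph{repelled} from it, so that $Z_\varepsilon$ stays strictly below $W$ on a fixed interval $[x_0-\delta_\flat,x_0]$ of length $\delta_\flat$ depending only on $x^\flat$ and the fixed data, not on $x_0$ or on small $\varepsilon$. Once this is proved, $a_\varepsilon(x_0)$ cannot lie in $[x_0-\delta_\flat,x_0[$, and since $a_\varepsilon(x_0)<x_0$ we get $x_0-a_\varepsilon(x_0)\ge\delta_\flat$.

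\textbf{Step 1 (uniform transversality).} By Proposition \ref{prop:nondev}, $W'(x)<\xi^\sharp(x,p_c(x))=F^-(x,W(x),p_c(x))$ for every $x\in\,]0,x^*[$. As $[x^\flat/2,\,x^*]$ is compact and all quantities are continuous, there is $\kappa>0$ with $F^-(x,W(x),p_c(x))\ge W'(x)+2\kappa$ on it; the uniform $\tfrac12$--Hölder continuity of $\eta\mapsto F^-(x,\eta,p)$ (Lemma \ref{lemma:holder}) and continuity of $p\mapsto F^-(x,\eta,p)$ off $\{H_\xi=0\}$ then give $d_0,\rho_0>0$ with
\[F^-(x,\eta,q)\ \ge\ W'(x)+\kappa\qquad\textrm{whenever}\qquad x\in[x^\flat/2,\,x^*],\quad 0\le W(x)-\eta\le d_0,\quad |q-p_c(x)|\le\rho_0.\]
I would also fix uniform bounds on the relevant compact sets: $M\ge|W'|,|p_c'|,|Z_\varepsilon'|$; the lower bound $|H_{\xi\xi}|\ge c_*>0$ in the nonlinear regime of $H$ and $H_\xi\ge(r-\mu)x^\flat/2$ in the linear one (as in the proof of Lemma \ref{lemma:holder}); and the strict decrease of $p\mapsto H^{\max}(x,p)$ (Lemma \ref{lemma:nonincmax}(4)).

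\textbf{Step 2 (bootstrap on $[x_0-\delta_1,x_0]$ --- the heart).} Put $\psi_\varepsilon:=W-Z_\varepsilon$. I would prove that for $\delta_1\le x^\flat/2$ and $\varepsilon<\varepsilon_0$ small (depending only on the Step-1 constants) the following hold on $[x_0-\delta_1,x_0]$: $0<\psi_\varepsilon\le d_0$, $|q_\varepsilon-p_c|\le\rho_0$, and $q_\varepsilon<p_c$ in the interior; these are closed by a continuity argument. Granting them on a subinterval: Step 1 gives $\psi_\varepsilon'=W'-F^-(x,Z_\varepsilon,q_\varepsilon)\le-\kappa<0$, so $\psi_\varepsilon$ increases backward from $\psi_\varepsilon(x_0)=\varepsilon$, whence $\kappa(x_0-x)\le\psi_\varepsilon(x)\le\varepsilon+M\delta_1<d_0$, recovering the first bound. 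For the second I reparametrize as in Proposition \ref{prop:pro1}: with $\tfrac{dy}{dx}=1/H_\xi(x,Z_\varepsilon',q_\varepsilon)$, $y(x_0)=0$, the $q$--equation becomes $\tfrac{dq_\varepsilon}{dy}=(r+\lambda+v^*(x,Z_\varepsilon'))q_\varepsilon-(r+\lambda)$, with bounded right-hand side; the key estimate is $H_\xi\ge c\sqrt{\psi_\varepsilon}$, coming from $rZ_\varepsilon=H(x,Z_\varepsilon',q_\varepsilon)\le H^{\max}(x,q_\varepsilon)$ and $H^{\max}(x,q_\varepsilon)\ge H^{\max}(x,p_c(x))=rW(x)$ (valid since $q_\varepsilon\le p_c$), which yield $H^{\max}(x,q_\varepsilon)-rZ_\varepsilon\ge r\psi_\varepsilon$, and then a second-order Taylor expansion of $H(x,\cdot,q_\varepsilon)$ at $\xi^\sharp(x,q_\varepsilon)$ together with $|H_{\xi\xi}|\ge c_*$ give $\xi^\sharp(x,q_\varepsilon)-Z_\varepsilon'\ge c'\sqrt{\psi_\varepsilon}$ and hence $H_\xi\ge c''\sqrt{\psi_\varepsilon}\ge c''\sqrt{\kappa(x_0-x)}$ (in the linear regime one uses the unconditional lower bound on $H_\xi$ instead). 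Therefore $|q_\varepsilon'(x)|=|\tfrac{dq_\varepsilon}{dy}|/H_\xi\le C(x_0-x)^{-1/2}+C$, an integrable singularity, so $|q_\varepsilon(x)-q_\varepsilon(x_0)|\le C\sqrt{\delta_1}$; with $q_\varepsilon(x_0)=p_c(x_0)$ and $|p_c(x_0)-p_c(x)|\le M\delta_1$ this gives $|q_\varepsilon-p_c|<\rho_0$ for $\delta_1$ small. Finally $q_\varepsilon<p_c$ persists: at a would-be crossing $\bar x<x_0$, $Z_\varepsilon'(\bar x)$ is close to $\xi^\sharp(\bar x,p_c(\bar x))$, hence $v^*(\bar x,Z_\varepsilon'(\bar x))$ close to $v^\sharp(\bar x,p_c(\bar x))>v_c(\bar x)$, forcing $\tfrac{d}{dx}(q_\varepsilon-p_c)(\bar x)>0$, so a crossing can only go from $q_\varepsilon>p_c$ to $q_\varepsilon<p_c$ as $x$ decreases; since $q_\varepsilon<p_c$ just left of $x_0$, there is no such crossing. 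This closes the bootstrap.

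\textbf{Step 3 (conclusion and main obstacle).} On $[x_0-\delta_1,x_0]$ we have $\psi_\varepsilon'\le-\kappa<0$, so $\psi_\varepsilon>0$ there, i.e. $Z_\varepsilon<W$ strictly; thus $a_\varepsilon(x_0)\notin[x_0-\delta_1,x_0[$ and $x_0-a_\varepsilon(x_0)\ge\delta_1=:\delta_\flat$. The main obstacle is the control of $q_\varepsilon$ in Step 2: since the starting point approaches the fold $\{H_\xi=0\}$ as $\varepsilon\to0$, the field $G^-$ governing $q_\varepsilon$ blows up near $x_0$, so a uniform-continuity estimate is not available; the remedy is the reparametrization $y$ together with the quantitative bound $H_\xi\ge c\sqrt{\psi_\varepsilon}$ and the linear backward growth $\psi_\varepsilon\ge\kappa(x_0-x)$, which make $q_\varepsilon'$ integrable with only a square-root singularity at $x_0$. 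Checking that all constants depend only on $x^\flat$ and the data, and patching the linear/nonlinear regimes, is routine.
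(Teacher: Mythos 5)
Your proposal takes a genuinely different route from the paper's proof, and it contains a gap that the paper's more elementary argument avoids.

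The paper's proof never introduces a bootstrap or a reparametrization. It fixes the uniform transversality gap $\displaystyle\delta_{1,\flat}=\inf_{x\in[x^\flat,x_W^*]}\bigl\{\xi^\sharp(x,p_c(x))-W'(x)\bigr\}>0$, tracks the first backward point $x_1$ at which $F^-(x,Z_\varepsilon,q_\varepsilon)=W'(x)$ is reached, and chooses the intermediate point $x_2\in(x_1,x_0]$ where $F^-(x_2,Z_\varepsilon(x_2),q_\varepsilon(x_2))=W'(x_0)+\delta_{1,\flat}/2$. Then, using only (a) the monotone dependence of $F^-$ on $p$ (computed via the implicit function theorem) and on $x$, (b) the monotone decrease of $q_\varepsilon(\cdot,x_0)$, (c) the $\tfrac12$-H\"older modulus of $\eta\mapsto F^-$ from Lemma~\ref{lemma:holder}, and (d) the mean value theorem on $[x_1,x_2]$ where $Z_\varepsilon'$ is bounded, it produces a lower bound on $x_2-x_1$ that is uniform in $\varepsilon$ and in $x_0$. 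At no point does it need a pointwise sign comparison between $q_\varepsilon$ and $p_c$.

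Your Step~2 does need such a comparison, and that is the gap. The inequality $H_\xi\gtrsim\sqrt{\psi_\varepsilon}$ rests on $H^{\max}(x,q_\varepsilon)\ge H^{\max}(x,p_c(x))=rW(x)$, and since $p\mapsto H^{\max}(x,p)$ is strictly \emph{decreasing} (Lemma~\ref{lemma:nonincmax}(4)), this requires $q_\varepsilon\le p_c$ on $[x_0-\delta_1,x_0]$. You offer two reasons for this sign, and neither is established. (i) ``$q_\varepsilon<p_c$ just left of $x_0$'': at $x_0$ the two functions are equal by construction and both are nonincreasing, so the sign of $q_\varepsilon-p_c$ immediately to the left depends on a comparison of $q_\varepsilon'(x_0^-)$ with $p_c'(x_0)$ that you have not carried out; it is not obvious in either direction. (ii) The persistence argument at a would-be crossing $\bar x$ needs $v^*(\bar x,Z_\varepsilon'(\bar x))>v_c(\bar x)$, obtained from $v^*\approx v^\sharp(\bar x,p_c(\bar x))$ and a strict margin $v^\sharp(\bar x,p_c(\bar x))>v_c(\bar x)$. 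But this margin degenerates: by the very definition \eqref{eq:x-flat} of $x^\flat$ one has $v^\sharp(x^\flat,1)=0=v_c(x^\flat)$, so for $\bar x$ close to $x^\flat$ the quantity $v^\sharp(\bar x,p_c(\bar x))-v_c(\bar x)$ is arbitrarily small, while $v^*(\bar x,Z_\varepsilon'(\bar x))<v^\sharp(\bar x,p_c(\bar x))$ strictly; hence $v^*$ can fall below $v_c$ and the sign of $\frac{d}{dx}(q_\varepsilon-p_c)(\bar x)$ is not forced. Since $x_0\in(x^\flat,x_W^*)$ is arbitrary, the Lemma must cover $x_0$ arbitrarily close to $x^\flat$, and the persistence argument fails exactly there.

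A minor further point: the Taylor step ``$\xi^\sharp-Z_\varepsilon'\ge c'\sqrt{\psi_\varepsilon}$'' requires an \emph{upper} bound on $|H_{\xi\xi}|$, not the lower bound $|H_{\xi\xi}|\ge c_*$ you invoke; the lower bound is what then converts $\xi^\sharp-Z_\varepsilon'$ into a lower bound on $H_\xi$. Both bounds do hold on the relevant compact sets, but the logic as written conflates them.

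The underlying idea in your Step~2 --- reparametrize by $y$ with $dy/dx=1/H_\xi$ to tame the blow-up of $G^-$ near the fold, and get a quantitative $H_\xi\gtrsim\sqrt{\psi_\varepsilon}$ from a curvature estimate --- correctly identifies the real difficulty and is an interesting alternative strategy. But as written it hinges on the pointwise comparison $q_\varepsilon\le p_c$, which is the missing piece. The paper's argument is shorter precisely because it replaces this comparison with the fixed reference slope $W'(x_0)$ together with the H\"older modulus of $F^-$.
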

\begin{proof} From \eqref{eq:k-slop} and \eqref{eq:Xisharp}, it holds
\[
\inf_{x\in [x^{\flat},x_W^*]}\left\{\xi^{\sharp}(x,p_c(x))-W'(x)\right\}=\delta_{1,\flat}>0.
\]
In particular, we have 
\[
F^{-}(x_0,W(x_0),p_{c}(x_0))-W'(x_0)=\delta_{1,\flat}.
\]
By continuity of the map $\eta\mapsto F^-(x_0,\eta,p_c(x_0))$ on $[0,W(x)]$, one can find a constant $\varepsilon_1>0$ sufficiently small such that 
\[
F^-(x_0,\eta,p_c(x_0))\ge W'(x_0)+\dfrac{\delta_{1,\flat}}{2}\quad\forall \xi\in [W(x_0)-\varepsilon_1,W(x_0)].
\]
On the other hand, the continuity of $W'$ yields
\[
\delta_{2,\flat}=\sup\left\{s\geq 0~\Big|~W'(x_0-\tau)<W'(x_0)+\dfrac{\delta_{1,\flat}}{4}\quad\forall \tau \in [0,s]\right\}>0.
\] 
For a fixed $\varepsilon\in (0,\varepsilon_1)$, denote by 
\[
x_1\doteq \inf\left\{s\in (0,x_0]~\Big|~F^{-}\big(x,Z_{\varepsilon}(x,x_0),q_{\varepsilon}(x,x_0)\big)>W'(x)~\forall x\in (s,x_0]\right\}.
\]
If $x_1>x_0-\delta_{2,\bar{x}}$ then it holds
\begin{equation}\label{eq:cd1}
F^{-}\big(x_1,Z_{\varepsilon}(x_1,x_0),q_{\varepsilon}(x_1,x_0)\big)=W'(x_1)\le W'(x_0)+\dfrac{\delta_{1,\flat}}{4}
\end{equation}
and there exists $x_2\in (x_1,x_0]$ such that 
\begin{equation}\label{eq:cd2}
F^{-}\big(x_2,Z_{\varepsilon}(x_2,x_0),q_{\varepsilon}(x_2,x_0)\big)=W'(x_0)+\dfrac{\delta_{1,\flat}}{2}
\end{equation}
and 
\begin{equation}\label{eq:k-cod}
F^{-}\left(x,Z_{\varepsilon}(x,x_0),q_{\varepsilon}(x,x_0)\right)\le W'(x_0)+\dfrac{\delta_{1,\flat}}{2}\qquad\forall x\in [x_1,x_2].
\end{equation}
Recalling that $(x,\eta,p)\mapsto F^-(x,\eta,p)$ is defined by $H(x,F^-(x,\eta,p),p)=r\eta$, by the implicit function theorem, set $\xi=F^-(x,\eta,p)$,
we have
\begin{align*}
\dfrac{\partial}{\partial p}F^-(x,\eta,p)&=~-\dfrac{H_p(x,\xi,p)}{H_\xi(x,\xi,p)}\\
&=~\dfrac{\xi}{p}\cdot \dfrac{u^*(x,\xi,p)-x (\lambda +r)}{u^*(x,\xi,p)-x (\lambda +r)+xp(\lambda +\mu +v^*(x,\xi))}\\
&=~\left(1+\dfrac{x(\lambda+\mu+v^*(x,\xi)}{H_\xi(x,\xi,p)}\right)\dfrac{\xi}{p}>\dfrac{F^-(x,\eta,p)}{p}>0.
\end{align*}
Since $q_{\varepsilon}(\cdot,x_0)$ is decreasing, it holds
\[
F^{-}\big(x_1,Z_{\varepsilon}(x_1,x_0),q_{\varepsilon}(x_1,x_0)\big)\ge F^{-}\big(x_1,Z_{\varepsilon}(x_1,x_0),q_{\varepsilon}(x_2,x_0)\big),
\]
and \eqref{eq:cd1}-\eqref{eq:cd2} yield
\[
F^{-}\big(x_2,Z_{\varepsilon}(x_2,x_0),q_{\varepsilon}(x_2,x_0)\big)-F^{-}\big(x_1,Z_{\varepsilon}(x_1,x_0),q_{\varepsilon}(x_2,x_0)\big)\ge \dfrac{\delta_{1,\flat}}{4}.
\]
On the other hand, from \eqref{eq:gradH} one shows that  the map $x\to F^-(x,\eta,p)$ is monotone decreasing and thus 
\begin{equation}\label{eq:F-11}
F^{-}\big(x_2,Z_{\varepsilon}(x_2,x_0),q_{\varepsilon}(x_2,x_0)\big)-F^{-}\big(x_2,Z_{\varepsilon}(x_1,x_0),q_{\varepsilon}(x_2,x_0)\big)\ge \dfrac{\delta_{1,\flat}}{4}.
\end{equation}
Observe that the map $\eta\to F^-(x,\eta,p)$ is H\"{o}lder continuous due to Lemma \ref{lemma:holder}. More precisely, there exist a constant $C_{x^{\flat}}>0$ such that  
\[\left|F^-(x,\eta_2,p)-F^-(x,\eta_1,p)\right|\le C_{x^{\flat}}\cdot \big|\eta_2-\eta_1\big|^{\frac{1}{2}}\]
for all $\eta_1,\eta_2\in (0,W(x)]$, $x\in[\bar{x},x^*]$, $p\in [\theta(x^*),1]$. From \eqref{eq:F-11} it holds
\[
\displaystyle\left|Z_{\varepsilon}(x_2,x_0)-Z_{\varepsilon}(x_1,x_0)\right|\ge \dfrac{\delta^2_{1,\flat}}{16C^2_{x^{\flat}}}.
\]
Recalling \eqref{eq:k-cod}, we have 
\[
Z'_{\varepsilon}(x,x_0)=F^{-}\left(x,Z_{\varepsilon}(x,x_0),q_{\varepsilon}(x,x_0)\right)\le W'(x_0)+\dfrac{\delta_{1,x^{\flat}}}{2}\qquad\forall x\in [x_1,x_2]
\]
and it yields
\[
|x_2-x_1|\ge \displaystyle \dfrac{\delta^2_{1,x^{\flat}}}{8C^2_{x^{\flat}}[2W'(x_0)+\delta_{1,x^{\flat}}]}.
\]
Therefore, 
\[
x_0-a_{\varepsilon}(x_0)\ge \delta_{x^\flat}\doteq \min~\left\{\delta_{1,x^{\flat}},\dfrac{\delta^2_{1,x^{\flat}}}{8C^2_{x^{\flat}}[2W'(x_0)+\delta_{1,x^{\flat}}]} \right\}>0.
\]
\end{proof}
\begin{remark} 
In general, the backward Cauchy problem \eqref{eq:V-x0} may admits more than one solution.
\end{remark}
As a consequence of Lemma \ref{lemma:bw} , there exists a sequence $\{\varepsilon_{n}\}_{n\geq 0}\to 0+$ 
such that  $\{(Z_{\varepsilon_n}(\cdot,x_0),q_{\varepsilon_n}(\cdot,x_0))\}_{n\geq 1}$ converges to  $(Z(\cdot,x_0),q(\cdot,x_0))$ 
which is a solution of \eqref{eq:V-x0}. With the same argument in the proof of Proposition \ref{prop:pro1}, we can extend backward the solution $(Z(\cdot,x_0),q(\cdot,x_0))$ until $a(x_0)$ such that 
\[\lim_{x\to a(x_0)+}~Z(a(x_0),x_0)=W(a(x_0)),\]
and Lemma \ref{lemma:compW} yields $ \lim_{x\to a(x_0)+}q(a(x_0),x_0)\leq p_c(a(x_0))$. If the initial size of the debt is $\bar{x}\in [a(x_0),x_0]$ 
we think of $Z(\bar{x},x_0)$ is as the expected cost of \eqref{eq:cost-feed}-\eqref{eq:cont-feed} with $p(\cdot,x_0)$, $x(0)=x_0$ achieved by the feedback strategies
\begin{equation}\label{eq:u-xx}
u(x,x_0)=\underset{w\in [0,1]}{\mathrm{argmin}}\left\{ L(w) - \,\dfrac{Z'(x,x_0)}{p(x,x_0)}\cdot w\right\},\quad v(x,x_0)=\underset{v\geq 0}{\mathrm{argmin}}\Big\{c(v)-vxZ'(x,x_0)\Big\}\,.
\end{equation}
With this strategy, the debt has the asymptotic behavior $x(t)\to x_0$ as $t\to\infty$.\par\medskip\par
\end{itemize}

\begin{figure}[ht]
\begin{tikzpicture}[line cap=round,line join=round,>=triangle 45,x=0.1 \textwidth,y= 0.1 \textwidth]
\draw[->,color=black] (-0.5,0) -- (8,0);
\draw[->,color=black] (0.,-0.5) -- (0,5);
\clip(-0.5,-0.5) rectangle (8,5);
\draw[color=red,smooth,samples=100,domain=0:5] plot(\x,{6.0/(6.0-(\x))-1.0});
\draw[color=blue,smooth,samples=100,domain=0:2.17] plot(\x,{(\x)^(3.0)/18.0});
\draw[color=blue,smooth,samples=100,domain=2.17:3.44] plot(\x,{((\x)-2.17)^(4.0)/4.0+((\x)-2.17)/10.0+0.57});
\draw[color=blue,smooth,samples=100,domain=3.44:4.44] plot(\x,{((\x)-3.44)^(4.0)+((\x)-3.44)/2.0+1.34});
\draw[color=blue,smooth,samples=100,domain=4.44:7] plot(\x,{((\x) - 4.44)^2 / 20 + ((\x) - 4.44) / 8 + 2.85});

\draw[dotted] (2.17,0.57)--(2.17,0);
\draw[dotted] (2.80,0.875)--(2.80,0);
\draw[dotted] (3.44,1.34)--(3.44,0);
\draw[dotted] (4.44,2.85)--(4.44,0);
\draw[dotted] (7,3.5)--(7,0);
\draw[dotted] (7,3.5)--(0,3.5);

\draw[->] (0,0)--(1.08,0);\draw[->] (2.17,0)--(2.485,0);
\draw[->] (2.80,0)--(3.12,0);\draw[->] (3.44,0)--(3.94,0);
\draw[->] (4.44,0)--(5.72,0);

\node[color=black, anchor=north east] at (0,0) {$O$};

\draw[fill=blue] (2.17,0.57) circle (2.0pt);
\draw[fill=blue] (2.17,0) circle (2.0pt);
\node[color=blue, anchor = north] at (2.17,0) {$x_3^{\phantom{\flat}}$};

\draw[fill=blue] (3.44,1.34) circle (2.0pt);
\draw[fill=blue] (3.44,0) circle (2.0pt);
\node[color=blue, anchor = north] at (3.44,0) {$x_2^{\phantom{\flat}}$};

\draw[fill=blue] (4.44,2.85) circle (2.0pt);
\draw[fill=blue] (4.44,0) circle (2.0pt);
\node[color=blue, anchor = north] at (4.44,0) {$x_1^{\phantom{\flat}}$};

\draw[fill=blue] (7,3.5) circle (2.5pt);
\draw[fill=blue] (7,0) circle (2.5pt);
\draw[fill=blue] (0,3.5) circle (2.5pt);
\node[color=blue, anchor = north] at (7,0) {$x^{*\phantom{\flat}}$};
\node[color=blue, anchor = east] at (0,3.5) {$B$};

\draw[fill=black] (2.8,0.875) circle (2.5pt);
\draw[fill=black] (2.8,0) circle (2.5pt);
\node[color=black, anchor = north] at (2.8,0) {$x^{\flat}$};

\node[color=red, anchor=east] at (4.8,4) {$W(x)$};
\node[color=blue] at (5.72,2.85) {$Z(x)$};
\node[color=blue] at (3.94,1.30) {$Z_1(x)$};
\node[color=blue] at (3,0.50) {$Z_2(x)$};
\node[color=blue, anchor=north] at (1.08,0) {$Z_3(x)$};
\end{tikzpicture}
\label{fig:construction}\caption{Construction of a solution: starting from $(x^*,B)$ we solve backward the system until the first touch with the graph of $W$ at $(x_1,W(x_1))$.
Then we restart by solving backward the system with the new terminal conditions $(W(x_1),p_c(x_1))$, until the next touch with the graph of $W$ at $(x_2,W(x_2))$ and so on.
In a finite number of steps we reach the origin. If a touch occurs at $x_{n_0}<x^\flat$ then the backward solution from $x_{n_0}$ reaches the origin with $q\equiv 1$.
Given an initial value $\bar x$ of the DTI, if $0\le x_{n+1}<\bar x<x_n<x_1$ the the optimal strategy let the DTI increase asymptotically to $x_n$ (no banktuptcy), while
if $x_1<\bar x<x^*$ then the optimal strategy let the DTI increase to $x^*$, thus providing bankruptcy in finite time.}
\end{figure}
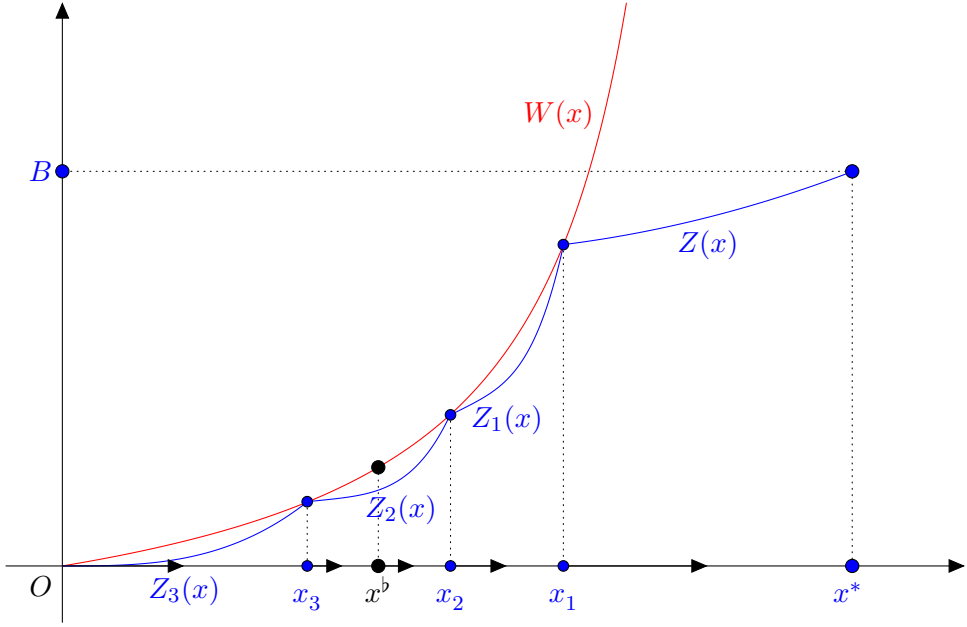

\subsubsection{Construction of an equilibrium solution.} 
We are now ready to construct an solution tothe system of Hamilton-Jacobi equation \eqref{eq:Sdode}. By induction, we define a family of back solutions as follows:
\[
x_1\doteq x^*_{W}, \qquad (Z_1(x),q_1(x))=\left(Z(x,x^*),q(x,x^*)\right)\qquad\forall x\in [x_1,x^*]
\]
and 
\[
x_{n+1}\doteq a(x_n),\qquad (Z(x,x_n),q(x,x_n))\qquad\forall x\in [x_{n+1},x_{n}]\,.
\]
From Case 1 and Lemma \ref{lemma:bw}, there exists a natural number $N_0<1+\dfrac{x^*-x^{\flat}}{\delta_{x^{\flat}}}$ 
such that our construction will be stop in $N_0$ step, i.e.,
\[
x_{N_0}>0,\qquad a(x_{N_0})=0\qquad\textrm{ and }\qquad \lim_{x\to a(x_{N_0})}Z(x,x_{N_0})=0\,.
\]
We will show that a feedback equilibrium solution to the debt management problem is obtained
as follows
\begin{equation}\label{eq:sol}
\left(V^*(x),p^*(x)\right)=\begin{cases}\left(Z(x,x^*),q(x,x^*)\right)\qquad\forall x\in (x_W,x^*],\\[4mm] 
\left(Z(x,x_{k}),q(x,x_k)\right) \qquad\forall x\in (a(x_k),x_k], k\in \{1,2,\dots, N_0\}\,.\end{cases}\\[4mm]\,.
\end{equation}
\begin{equation}\label{eq:sol-u}
u^*(x)=\underset{w\in [0,1]}{\mathrm{argmin}}\left\{ L(w) - \dfrac{(V^*)'(x)}{p^*(x)}\cdot w\right\},
\end{equation}
\begin{equation}\label{eq:sol-v}
v^*(x)=\underset{v\geq 0}{\mathrm{argmin}}\left\{c(v)-vx(V^*)'(x)\right\}\,.
\end{equation}
\begin{proof}[Proof of Theorem \ref{thm:main2}]
From the monotone increasing property  of the maps  $\xi\mapsto v^*(x^*,\xi)$, $\eta\mapsto F^-(x^*,\eta,\theta(x^*))$ and $p\mapsto F^-(x^*,W(x^*), p)$, we have 
\begin{multline*}
\theta(x^*)\cdot (r+\lambda+v^*(x^*,F^-(x^*,B,\theta(x^*)))\\
<p_c(x^*)\cdot (r+\lambda+v^*(x^*,F^-(x^*,W(x^*),p_c(x^*)))=r+\lambda
\end{multline*}
and it yields \eqref{eq:Asp1}. By Proposition \ref{prop:pro1} and Lemma \ref{lemma:bw}, a pair $V^*(\cdot),p^*(\cdot)$ in \eqref{eq:sol} is well-defined on $[0,x^*]$. In the remaining steps,  we show that $V^*, p^*, u^*,v^*$ provide an equilibrium solution. Namely, they satisfy the properties (i)-(ii) 
in Definition \ref{def:eqsol}.\par\bigskip\par
\textbf{1.} To prove (i), let $V(\cdot)$ be the value function 
for the optimal control problem \eqref{eq:cost-feed}-\eqref{eq:cont-feed}. For any initial value,  $x(0)=x_0\in [0,x^*]$,  
the feedback control $u^*$ and $v^*$ in  \eqref{eq:sol-u}-\eqref{eq:sol-v}
yields the cost $V^*(x_0)$. This implies
\[V(x_0)~\leq ~V^*(x_0).\]  
To prove the converse inequality we need to show that, for any measurable control $u:[0,+\infty[\,\mapsto [0,1]$ and $v:[0,+\infty[\to [0,+\infty[$, calling $t\mapsto x(t)$ the solution to 
\begin{equation}\label{eq:ode7}
\dot{x}(t)=\left(\dfrac{\lambda+r}{p^*(x(t))} -\lambda -\mu-v(t)\right) x(t) - \dfrac{u(t)}{p^*(x(t))}\,,\qquad\qquad x(0)=x_0\,,
\end{equation}
one has
\begin{equation}\label{eq:upVx_1}
\int_0^{T_b} e^{-rt}[L(u(x(t)))+c(v(x(t)))]\, dt + e^{-r T_b} B\ge V^*(x_0),
\end{equation}
where 
\[T_b=\inf\, \bigl\{t\geq 0\,;~~x(t)=x^*\bigr\}\] is the bankruptcy time
(possibly with $T_b=+\infty$).
\par\bigskip\par 
For $t\in [0, T_b]$, consider the
absolutely continuous function 
\[\phi^{u,v}(t)\doteq  \int_{0}^te^{-rs}\cdot [L(u(s))+c(v(s))]~ds + e^{-rt}V^*
(x(t)).\]
At any Lebesgue point $t$ of $u(\cdot)$ and $v(\cdot)$, recalling that $(V^*,p^*)$ solves the system \eqref{eq:Sdode}, we compute
\begin{align*}
&\displaystyle \frac{d}{dt}\phi^{u,v}(t)=e^{-rt}\cdot \Big[L(u(t))+c(v(t))-rV^*(x(t))+
(V^*)'(x(t))\cdot\dot{x}(t)\Big]\\[4mm]
&=\displaystyle ~e^{-rt}\cdot\Big[L(u(t))+c(v(t))-rV^*(x(t
))\\[4mm]
&\qquad\qquad\qquad+(V^*)'(x(t))\left(\left(\frac{\lambda+r}{p^*(x(t))}-\lambda-\mu-v(t)\right)x(t)-\frac{u(t)}{p^*(x(t))}\right)\Big]\\[4mm]
&\displaystyle\ge~ e^{-rt}\cdot\Big[ \min_{\omega\in[0,1]}
\left\{L(\omega)-\frac{(V^*)'(x(t))}{p^*(x(t))}\,\omega\right\} +\min_{\zeta\in [0+\infty[}\left\{c(\zeta)-(V^*)'(x(t))x(t)\,\zeta\right\}
\\[4mm]
&\qquad\qquad\qquad\qquad\qquad\qquad\qquad+ \left(\frac{\lambda+r}{p^*(x(t))}-\lambda-\mu\right)
x(t)(V^*)'(x(t))-rV^*(x(t))\Big]\\[3mm]
&= ~e^{-rt}\cdot \Big[H\left(x(t),(V^*)'(x(t)), p^*(x(t))\right)-rV^*(x(t))\Big]= 0.
\end{align*}
Therefore,
\[V^*(x_0)=\phi^{u,v}(0)\le  \lim_{t
\rightarrow T_b-
}\phi^{u,v}(t
)=\int_{0}^{T_b
} e^{-rt}\cdot [L(u(t))+c(v(t))]~dt + e^{-rT_b
}B,\]
proving \eqref{eq:upVx_1}.
\par\bigskip\par
\textbf{2.} It remains to check (ii).  The case $x_0=0$ is trivial.
Two main cases will be considered.
\par\bigskip\par
\emph{CASE 1:  $x_0\in ]x_1,x^*]$.} Then $x(t)>x_1$ for all 
$t\in [0,T_b]$. This implies
\[
\dot{x}(t)=H_{\xi}(x(t),Z(x(t),x^*),q(x(t),x^*))\,.
\]
From the second equation in  \eqref{eq:Sdode} it follows 
\[\dfrac{d}{dt}~p(x(t))=p'(x(t))\dot x(t)=(r+\lambda+v^*(x(t)))p(x(t))-(r+\lambda),\]
Therefore, for every $t\in [0, T_b]$ one has
\[
p(x(0))=p(t)\cdot\int_{0}^{t} e^{-(r+\lambda+v^*(x(\tau)))}~d\tau+\int_{0}^t (r+\lambda)\int_{0}^{\tau} e^{-(r+\lambda+v^*(x(s)))}~ds~d\tau
\]
Letting $t\to T_b$ we obtain
\[
p(x_0)=\int_{0}^{T_b} (r+\lambda)\int_{0}^{\tau} e^{-(r+\lambda+v^*(x(s)))}~ds~d\tau+\theta(x^*)\cdot\int_{0}^{T_b} e^{-(r+\lambda+v^*(x(\tau)))}~d\tau
\]
proving (ii).
\par\bigskip\par
 \emph{CASE 2: $x_0\in \, [a(x_k),x_{k}[$ for $k\in\{1,2,...,N_0\}$.}. In this case,  $T_b=+\infty$ and $x(t)\in [a_{x_k},x_k[$ such that 
\[
\lim_{t\to +\infty}~x(t)=x_{k}\,.
\]
With a similar computation, one has 
\[
p(x_0)=\theta(x^*)\cdot\int_{0}^{\infty} e^{-(r+\lambda+v^*(x(\tau)))}~d\tau
\]
proving (ii).
\end{proof}
\par\bigskip\par
\subsection{Dependence on $x^*$} As in the stochastic case, we now study the behavior of the total cost for servicing when the maximum size $x^*$ of the DTI, at which bankruptcy is declared, becomes very large. 
\begin{proposition} Let $(V(x,x^*),p(x,x^*))$ be constructed in Theorem \ref{thm:main2}. The following holds:
\begin{itemize}
\item [(i)] if $\displaystyle\limsup_{s\to +\infty}\theta(s)s=R<+\infty$  then 
\begin{equation}\label{eq:bb}
\liminf_{x^*\to+\infty}V(x,x^*)\ge B\cdot \left(1-\frac{R}{x}\right)^{\frac{r}{r+\lambda}}
\end{equation}
for all 
\[x\ge \frac{1}{r-\mu}\cdot \max\left\{4,\frac{4B}{L'(0)}, \frac{4C_1B}{c'(0)}, 2C_1c^{-1}(rB)\right\}.\]
\item [(ii)] if $\displaystyle\lim_{s\to+\infty}~\theta(s)s=+\infty$ then 
\begin{equation}\label{eq:vvD}
\limsup_{x^*\to\infty}~V(x,x^*)=0\qquad\forall x\in [0,x^*[
\end{equation}
\end{itemize}
\end{proposition}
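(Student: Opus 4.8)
The plan is to transfer to the first–order implicit system \eqref{eq:Sdode} (the case $\sigma=0$) the barrier technique already used in the stochastic setting after Theorem \ref{thm:Exist-Stoch}. Throughout we use the information supplied by Theorem \ref{thm:main2}: the equilibrium $(V(\cdot,x^*),p(\cdot,x^*))$ is well defined for $x^*$ large (since $W(x^*)\to+\infty>B$ and $p_c(x^*)\to\frac{r+\lambda}{r+\lambda+v_{\max}}\ge\theta(x^*)$ eventually), $V(\cdot,x^*)$ is strictly increasing with $V(0,x^*)=0$, $V(x^*,x^*)=B$, and in fact $0\le V(\cdot,x^*)\le B$ (the strategy $u\equiv v\equiv0$ just drives the DTI up to $x^*$ at finite cost $\le B$). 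Comparisons will be run as in the proof of Theorem \ref{thm:main2}, i.e. via the monotone branch $F^{-}(x,\cdot,p)$ (equivalently, by a logarithmic–derivative estimate integrated backward from $x^*$), so I will not re-derive the comparison principle.

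\textbf{Part (ii)} ($\theta(s)s\to+\infty$, the Ponzi regime). I would construct a super–solution $V_1$ of the first equation of \eqref{eq:Sdode} vanishing on bounded sets as $x^*\to+\infty$. Put $\gamma=\frac{r+\lambda}{r+\lambda+v_{\max}}$, $\beta=r+\lambda+v_{\max}$. If $\theta(x^*)\ge\gamma$, take $V_1(x)=B(x/x^*)^{r/\beta}$, the backward solution of $rV_1=\beta xV_1'$; since $p(x,x^*)\ge\theta_{\min}=\gamma$ and $\lambda+\mu\ge0$, Lemma \ref{lemma:Hprop}(1) gives $H(x,V_1',p(x,x^*))\le\left(\frac{\lambda+r}{\gamma}\right)xV_1'=\beta xV_1'=rV_1$, so $V_1$ is a super–solution and $V(x,x^*)\le V_1(x)\to0$. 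If $\theta(x^*)<\gamma$, I would first squeeze $p$ from below: the $\sigma=0$ analogue of \eqref{eq:pV-upper} produces $(\tilde V_1,\tilde p_1)$ solving $r\tilde V_1=\left(\frac{\lambda+r}{\tilde p_1}\right)x\tilde V_1'$ and $\beta(\tilde p_1-\gamma)=\left(\frac{\lambda+r}{\tilde p_1}\right)x\tilde p_1'$ with terminal data $(B,\theta(x^*))$, and, after flattening $\tilde V_1$ near $0$ as in that proof, one checks $\tilde V_1$ is a super–solution and gets $V(x,x^*)\le\tilde V_1(x)\le B\left(\frac{x}{\theta(x^*)x^*}\right)^{r/\beta}$. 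In both cases $\theta(x^*)x^*\to+\infty$ forces $V(x,x^*)\to0$ for every fixed $x$, and the monotonicity of $V(\cdot,x^*)$ upgrades this to \eqref{eq:vvD}.

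\textbf{Part (i)} ($\limsup_s\theta(s)s=R<+\infty$, the non–Ponzi regime). Here I would build a sub–solution $V_2$. First, as in the stochastic case, $p_2(x)=\min\{1,\bar x_2/x\}$ with $\bar x_2\doteq\frac{1}{r-\mu}+\theta(x^*)x^*$ is a super–solution of the second equation of \eqref{eq:Sdode}: inserting $p_2'=-p_2/x$ in the identity for $H_\xi$ from \eqref{eq:gradH}, the relevant expression collapses to $-(r-\mu)p_2+u^*/x$, which is $\le0$ once $\bar x_2\ge\frac{1}{r-\mu}$ and $u^*\le1$; hence $p(x,x^*)\le p_2(x)$. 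The restriction on $x$ in the statement is precisely what makes the linear part of $H$ dominate on that range: the quantities $4$, $4B/L'(0)$, $4C_1B/c'(0)$, $2C_1c^{-1}(rB)$ (with $C_1=R$) are the thresholds ensuring $xV'(x,x^*)\le c'(0)$ and $V'(x,x^*)\le p(x,x^*)L'(0)$ there, so that $u^*=v^*=0$. Using $\mu<r$ and $V'\ge0$, one then has $H(x,V_2',p)\ge\left(\frac{\lambda+r}{p}-\lambda-\mu\right)xV_2'\ge(\lambda+r)\left(\frac{1}{p_2}-1\right)xV_2'=(\lambda+r)\frac{x-\bar x_2}{\bar x_2}xV_2'$, so the backward solution $V_2$ of $rV_2=(\lambda+r)\frac{x-\bar x_2}{\bar x_2}xV_2'$ with $V_2(x^*)=B$ satisfies $rV_2\le H(x,V_2',p(x,x^*))$ and is a sub–solution; integrating, $V_2(x)=B\left(\frac{1-\bar x_2/x}{1-\bar x_2/x^*}\right)^{r/(r+\lambda)}\ge B\left(1-\frac{\bar x_2}{x}\right)^{r/(r+\lambda)}$. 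Comparison yields $V(x,x^*)\ge V_2(x)$, and letting $x^*\to+\infty$ with $\limsup_{x^*}\theta(x^*)x^*=R$ gives \eqref{eq:bb}.

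\textbf{Main obstacle.} I expect the delicate points to be, first, that \eqref{eq:Sdode} is implicit and the field $F^{-}$ is merely Hölder (Lemma \ref{lemma:holder}), so the comparison steps must be carried out as in Proposition \ref{prop:pro1} and the proof of Theorem \ref{thm:main2} rather than quoted; and, second and mostly, the regime bookkeeping in Part (i): certifying that along the equilibrium $V(\cdot,x^*)$, \emph{in the stated range of $x$ and uniformly as $x^*\to+\infty$}, the optimal controls are small enough for $H$ to reduce to its linear part, and that $p_2$ stays an admissible super–solution all the way up to $x^*$. Making this quantitative is exactly what dictates the unwieldy lower bound on $x$.
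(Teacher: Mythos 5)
Your plan — transplanting the stochastic super/sub-solution barriers to the $\sigma=0$ system — is a genuinely different route from the paper's, which analyzes the equilibrium $(V(\cdot,x^*),p(\cdot,x^*))$ directly: it first shows that for $x$ beyond an explicit threshold $M$ and $p(x,x^*)$ small enough, the optimal controls vanish, so on $[M,x^*]$ the implicit system \eqref{eq:Sdode} reduces to the explicitly solvable linear system \eqref{eq:ode---}, and the asymptotics are then read off from the closed-form formulas for $V$ and $p$ (supplemented, in Part (ii), by a Gr\"onwall estimate on $[M_2,\tau(x^*)]$ and a dichotomy on $\limsup_{x^*}\tau(x^*)$). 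No auxiliary sub- or super-solutions are built.

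There is a genuine gap in your Part (i), which you flag as the ``main obstacle'' but do not close. You assert $H(x,V_2',p)\ge\bigl(\frac{\lambda+r}{p}-\lambda-\mu\bigr)xV_2'$, but since $L^\circ,c^\circ\ge0$, definition \eqref{eq:detHamil-2} gives the \emph{opposite} inequality; equality requires $V_2'/p\le L'(0)$ and $xV_2'\le c'(0)$. You only argue these smallness conditions for the equilibrium derivative $V'(x,x^*)$, not for the derivative of your barrier $V_2$, so the subsolution property of $V_2$ is not actually verified (the paper sidesteps this by working on $(V,p)$ itself). Moreover, your choice $\bar x_2=\frac{1}{r-\mu}+\theta(x^*)x^*$ yields only $\liminf V\ge B\bigl(1-\frac{1/(r-\mu)+R}{x}\bigr)^{r/(r+\lambda)}$, strictly weaker than the stated $B(1-R/x)^{r/(r+\lambda)}$. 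In Part (ii) you also rely on a uniform lower bound $p(x,x^*)\ge\gamma$, but in the deterministic case Theorem \ref{thm:main2} and Proposition \ref{prop:pro1} only give $p\in\,]0,1]$ — the bound $p\ge\theta_{\min}$ was available in the stochastic setting via the invariant-box construction, which has no deterministic analogue here. The paper's device of splitting $[M_2,x^*]$ at $\tau(x^*)=\inf\{x\ge M_2: p(x,x^*)\le\gamma\}$, using the explicit solution on $[\tau(x^*),x^*]$ and the rough bound $rV\le\frac{r+\lambda}{\gamma}xV'$ on $[M_2,\tau(x^*)]$, is exactly what dispenses with that missing lower bound.
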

\begin{proof} \textbf{1.} We first provide an upper bound on $v(\cdot,x^*)$. For all $x\geq\frac{4}{r-\mu}$, from \eqref{eq:Sdode} and \eqref{eq:detHamil}, we estimate 
\begin{align*}
H(x,\xi,p)&\geq~\min_{v\geq 0}\left\{c(v)-x\xi v\right\}+[(r-\mu)x-1]\cdot \frac{\xi}{p}\\
&\geq~\min_{v\geq 0}\left\{c(v)-x\xi v\right\}+	\frac{(r-\mu)x}{2}\cdot \frac{\xi}{p}\doteq K(x,\xi,p)
\end{align*}
for all $\xi,p >0$ and $x\geq \dfrac{2}{r-\mu}$. One computes 
\[K_{\xi}(x,\xi,p)=\frac{(r-\mu)x}{2p}-xv_K\]
where 
\[v_{K}=\begin{cases}0,&\textrm{ if }0\le x\xi< c'(0),\\ (c')^{-1}(x\xi),&\textrm{ if }x\xi\ge c'(0)>0.\end{cases}\]
This implies that the maximum of $K$ is achieved for $v_K=\frac{r-\mu}{2p}$ and its value is 
\[
\max_{\xi\geq 0}~K(x,\xi,p)=K(x,\xi_K,p)=c\left(\frac{r-\mu}{2p}\right)\qquad\textrm{ with }\qquad \xi_K=\frac{c'(v_K)}{x}\,.
\]
Thus, the monotone increasing property of the map $\xi\to H(x,\xi,p(x,x^*))$ on the interval $\big[0,\xi^{\sharp}(x,p(x,x^*))\big]$ implies that 
\begin{equation}\label{eq:v-cd}
F^-(x,V(x,x^*),p(x,x^*))<\xi_K\qquad\mathrm{\Longrightarrow}\qquad v(x,x^*)\le \frac{r-\mu}{2p(x,x^*)}\,.
\end{equation}
provided that $c\left(\frac{r-\mu}{2p(x,x^*)}\right)\geq rB$. From \eqref{eq:Sdode}) and \eqref{eq:detHamil}, one has 
\begin{align*}
rB\geq&-x V'(x,x^*)v(x,x^*)+[(r-\mu)x-u(x,x^*)]\cdot \frac{V'(x,x^*)}{p(x,x^*)}\\
\geq& \left[\frac{(r-\mu)x}{2}-1\right]\cdot \frac{V'(x,x^*)}{p(x,x^*)}\ge \frac{(r-\mu)x}{4}\cdot \frac{V'(x,x^*)}{p(x,x^*)}\,.
\end{align*}
Thus,  if 
\begin{equation}\label{eq:cod11}
p(x,x^*)\le  \min\left\{\frac{r-\mu}{2c^{-1}(rB)}, \frac{(r-\mu)c'(0)}{4B}\right\}\quad\textrm{ and }\quad x\ge  \max\left\{\frac{4}{r-\mu},\frac{4B}{(r-\mu)L'(0)}\right\}
\end{equation}
then 
\begin{equation}\label{eq:u=0}
\frac{V'(x,x^*)}{p(x,x^*)}\le  \frac{4B}{(r-\mu )x}\le L'(0)\qquad\mathrm{\Longrightarrow}\qquad u(x,x^*)=0\,.
\end{equation}
and 
\begin{equation}\label{eq:v=0-D}
V'(x,x^*)x\le \frac{4B}{r-\mu}\cdot p(x,x^*)\le c'(0)\qquad\mathrm{\Longrightarrow}\qquad v(x,x^*)=0\,.
\end{equation}
In this case, from \eqref{eq:Sdode}, \eqref{eq:detHamil} and \eqref{eq:gradH}, it holds
\[
(r+\lambda)(p(x,x^*)-1)=\left(\dfrac{\lambda+r}{p(x,x^*)}-\lambda-\mu\right)xp'(x,x^*)\,.
\]
Thus, 
\[
p(x,x^*)=\dfrac{\theta(x^*) x^*}{x}\cdot \left(\dfrac{1-p(x,x^*)}{1-\theta(x^*)}\right)^{\frac{r-\mu}{r+\lambda}}
\]
provided that \eqref{eq:cod11} holds.
\par\medskip\par
\textbf{2.} Assume that 
\[
\limsup_{s\in [0,+\infty)}~\theta(s)s=R<+\infty\,,
\]
we have 
\[
\sup_{s\in [0,+\infty)}~\theta(s)s=C_1
\]
 for some $C_1<+\infty$. Since $p(\cdot,x^*)$ is increasing, it holds
\begin{equation}\label{eq:upb-p}
p(x,x^*)\le \dfrac{\theta(x^*) x^*}{x}\le \frac{C_1}{x}\qquad\textrm{if \eqref{eq:cod11} holds}\,.
\end{equation}
Denote by 
\[M\doteq \dfrac{1}{r-\mu}\cdot \max\left\{4,\frac{4B}{L'(0)}, \frac{4C_1B}{c'(0)}, 2C_1c^{-1}(rB)\right\}\,,\]
we then have
\[u(x,x^*)=v(x,x^*)=0\qquad\forall x\in [M,x^*], x^*\geq M\,.\]
Recalling \eqref{eq:Sdode}, \eqref{eq:detHamil} and \eqref{eq:gradH}, we have 
\begin{equation}\label{eq:ode---}
\begin{cases}
V'(x,x^*)&=~\dfrac{rp}{[(\lambda+r)-(\lambda+\mu)p(x,x^*)]x}\, V \,,\\[6mm]
p'(x,x^*)&=~(\lambda+r)\cdot \dfrac{p(x,x^*)(p(x,x^*)-1)}{[(\lambda+r)-(\lambda+\mu)p(x,x^*)]\, x }\,.
\end{cases} 
\end{equation}
for all  $x\in [M,x^*]$ with $x^*\geq M$. Solving the above ODE (see in Section 5 of \cite{BMNP}), we obtain that 
\[
V(x,x^*)=B\cdot  \left(\dfrac{1-p(x,x^*)}{1-\theta(x^*)}\right)^{\frac{r}{r+\lambda}},\qquad p(x,x^*)=\dfrac{\theta(x^*) x^*}{x}\cdot \left(\dfrac{1-p(x,x^*)}{1-\theta(x^*)}\right)^{\frac{r-\mu}{r+\lambda}}
\]
for all $x\geq [M,x^*]$. Thus,
\[
\liminf_{x^*\to+\infty}V(x,x^*)\ge B\cdot \left(1-\frac{R}{x}\right)^{\frac{r}{r+\lambda}}\qquad\forall x\geq M
\]
and it yields \eqref{eq:bb}.
\par\medskip\par
\textbf{3.} We are now going to prove (ii). Assume that 
\begin{equation}\label{eq:as131}
\limsup_{s\to+\infty}~\theta(s)s=+\infty\,.
\end{equation}
Set 
\[\gamma\doteq \min\left\{\frac{r-\mu}{2c^{-1}(rB)}, \frac{(r-\mu)c'(0)}{4B}\right\}\qquad\textrm{ and }\qquad M_2\doteq \max\left\{\frac{4}{r-\mu},\frac{4B}{(r-\mu)L'(0)}\right\}\,.\]
For any $x^*>M_2$, denote by 
\[
\tau(x^*)\doteq \begin{cases}
x^*\qquad&\mathrm{if}\qquad \theta(x^*)\geq \gamma\,,\\[4mm]
\inf\left\{x\geq M_2~\Big|~p(x,x^*)\leq \gamma \right\} \qquad&\mathrm{if}\qquad\ \theta(x^*) < \gamma\,.
\end{cases} 
\]
From \eqref{eq:cod11}--\eqref{eq:v=0-D}, the decreasing property of $p$ yields
\begin{equation}\label{eq:upp}
p(x,x^*)\ge \gamma \qquad \forall x\in [M_2,\tau(x^*)[
\end{equation}
and
\[
p(x,x^*)<\gamma\qquad\Longrightarrow \qquad u(x,x^*)=v(x,x^*)\qquad \qquad\forall x\in [\tau(x^*),x^*]\,.
\]
As in the step 2, for any $x\in [\tau(x^*),x^*]$, we have 
\[
V(x,x^*)=B\cdot  \left(\dfrac{1-p(x,x^*)}{1-\theta(x^*)}\right)^{\frac{r}{r+\lambda}},\qquad p(x,x^*)=\dfrac{\theta(x^*) x^*}{x}\cdot \left(\dfrac{1-p(x,x^*)}{1-\theta(x^*)}\right)^{\frac{r-\mu}{r+\lambda}}
\]
This implies that 
\begin{equation}\label{eq:bv-e1}
V(x,x^*)=B\cdot \left(\frac{p(x,x^*)x}{\theta(x^*)x^*}\right)^{\frac{r}{r-\mu}}\le B\cdot \left(\frac{x}{\theta(x^*)x^*}\right)^{\frac{r}{r-\mu}}
\end{equation}
for all $x\in [\tau(x^*),x^*]$. On the other hand, for any $x\in [M_2,\tau(x^*)]$, from \eqref{eq:Sdode}, \eqref{eq:detHamil} and \eqref{eq:upp}, it holds 
\[
rV(x,x^*)\le \frac{r+\lambda}{p(x,x^*)}xV'(x,x^*)\le \frac{(r+\lambda)x}{\gamma}\cdot V'(x,x^*)\,.
\]
This implies that 
\begin{equation}\label{eq:bv-e2}
V(x,x^*)\le V(\tau(x^*),x^*)\cdot \left(\frac{x}{\tau(x^*)}\right)^{\frac{r\gamma}{r+\lambda}}\le B\cdot\left(\frac{x}{\tau(x^*)}\right)^{\frac{r\gamma}{r+\lambda}}
\end{equation}
for all $x\in [M_2,\tau(x^*)]$.
\par\medskip\par 
For any fix $x_0\geq M_2$, we will prove  that 
\begin{equation}\label{eq:pr}
\limsup_{x^*\to+\infty}~V(x_0,x^*)=0
\end{equation}
Two cases are considered:
\begin{itemize}
\item If $\limsup_{x^*\to+\infty}\tau(x^*)=+\infty$ then \eqref{eq:bv-e2} yields
\[
\lim_{x^*\to+\infty}V(x_0,x^*)\le \liminf_{x^*\to+\infty}~B\cdot \left(\frac{x_0}{\tau(x^*)}\right)^{\frac{r\gamma}{r+\lambda}}=0\,.
\]
\item If $\limsup_{x^*\to+\infty}\tau(x^*)<+\infty$ then 
\[
\tau(x^*)<M_3\qquad\forall x^*>0
\]
for some $M_3>0$. Recalling \eqref{eq:bv-e1} and \eqref{eq:as131}, we obtain that 
\[
\lim_{x^*\to\infty}~V(x_0,x^*)\le \lim_{x^*\to\infty}V(x_0+M_3,x^*)\le \lim_{x^*\to\infty}B\cdot \left(\frac{x_0+M_3}{\theta(x^*)x^*}\right)^{\frac{r}{r-\mu}}=0\,.
\]
\end{itemize}
Thus, \eqref{eq:pr} holds and the increasing property of $V(\cdot,x^*)$ yields \eqref{eq:vvD}.
\end{proof}

\par\medskip\par 

The following result shows that for sufficiently large initial DTI and bankruptcy threshold and recovery fraction after bankruptcy, the optimal strategy for the borrower will use currency devaluation to deflate the DTI. For simplicity, let us consider $x^*$ and $B^*$ sufficiently large such that

\begin{equation}\label{eq:cond12}
x^*>\dfrac{L'(0)+Br}{L'(0)\cdot (r-\mu)}\qquad\textrm{ and }\qquad B\geq\dfrac{2(r-\mu)c'(0)}{r}.
\end{equation}
In this case, the following holds:
\begin{proposition}[Devaluating strategies]
Let $x\mapsto \left(V(x,x^*),p(x,x^*)\right)$ be an equilibrium solution of \eqref{eq:Sdode} with boundary conditions \eqref{eq:bdc}. If 
\begin{equation}\label{eq:cond13}
\theta(x^*)x^*>\dfrac{2(r+\lambda)c'(0)}{r-\mu}\cdot\left(\dfrac{1}{rB}+\dfrac{1}{L'(0)}\right)
\end{equation}
then the function
\[v^*(x,x^*)=\underset{\omega\ge 0}{\mathrm{argmin}}\left\{c(\omega)-\omega x V'(x,x^*)\right\}\]
is not identically zero.
\end{proposition}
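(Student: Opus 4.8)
The plan is to argue by contradiction: suppose $v^*(\cdot,x^*)\equiv 0$ on $[0,x^*]$, which by the definition of $v^*$ and \textbf{(A2)} means exactly that $x\,V'(x,x^*)\le c'(0)$ for all $x\in[0,x^*]$. First I would extract a smallness bound on $\theta(x^*)=p(x^*,x^*)$. Since the two inner minima in \eqref{eq:detHamil} are both $\le 0$, the first equation of \eqref{eq:Sdode} gives $rV(x,x^*)=H(x,V'(x,x^*),p(x,x^*))\le\bigl(\tfrac{\lambda+r}{p(x,x^*)}-\lambda-\mu\bigr)x\,V'(x,x^*)$; as $\tfrac{\lambda+r}{p}-\lambda-\mu\ge r-\mu>0$ for $p\le1$ and $V'(\cdot,x^*)\ge0$, this rearranges to $x\,V'(x,x^*)\ge\dfrac{r\,V(x,x^*)\,p(x,x^*)}{\lambda+r-p(x,x^*)(\lambda+\mu)}$, so $x V'\le c'(0)$ yields $p(x,x^*)\bigl(rV(x,x^*)+(\lambda+\mu)c'(0)\bigr)\le(\lambda+r)c'(0)$; at $x=x^*$ this gives $\theta(x^*)\le\dfrac{(\lambda+r)c'(0)}{rB+(\lambda+\mu)c'(0)}$. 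Hence $\theta(x^*)$ is small, and together with \eqref{eq:cond12} one gets $\theta(x^*)<p_c(x^*)$, so the qualitative information of Theorem \ref{thm:main2} (monotonicity of $V(\cdot,x^*)$, $p(\cdot,x^*)$) is available.

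Second, I would use that with $v^*\equiv0$ the pair $(V,p)$ solves the no-devaluation system. At any point where $u^*(x,x^*)>0$ one has $V'(x,x^*)=p(x,x^*)L'\!\bigl(u^*(x,x^*)\bigr)\ge\theta(x^*)L'(0)$, so $x\,V'(x,x^*)\ge\theta(x^*)L'(0)\,x$; combined with $x V'\le c'(0)$ this forces $x\le c'(0)/(\theta(x^*)L'(0))$. Thus there is a largest interval $[\bar x,x^*]$ on which $u^*=v^*=0$, with $\bar x\le c'(0)/(\theta(x^*)L'(0))$, and on $[\bar x,x^*]$ the pair $(V,p)$ satisfies \eqref{eq:ode---}, whose solution is the explicit one recalled in Section~5 of \cite{BMNP}, namely $V(x,x^*)=B\bigl(\tfrac{1-p(x,x^*)}{1-\theta(x^*)}\bigr)^{r/(r+\lambda)}$ and $p(x,x^*)=\tfrac{\theta(x^*)x^*}{x}\bigl(\tfrac{1-p(x,x^*)}{1-\theta(x^*)}\bigr)^{(r-\mu)/(r+\lambda)}$ (as in \eqref{eq:bv-e1}); if $\bar x>0$ there is the extra equality $V'(\bar x,x^*)/p(\bar x,x^*)=L'(0)$, i.e. $rV(\bar x,x^*)=\bar x\bigl(\lambda+r-p(\bar x,x^*)(\lambda+\mu)\bigr)L'(0)$, and if $\bar x=0$ these formulas hold on all of $(0,x^*]$.

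Third, I would transport $x\,V'(x,x^*)\le c'(0)$ into these explicit formulas: writing $x\,V'(x,x^*)=\dfrac{r\,V(x,x^*)\,p(x,x^*)}{\lambda+r-p(x,x^*)(\lambda+\mu)}$ and eliminating $p(x,x^*)$ through the implicit relation, the constraint on $[\bar x,x^*]$ becomes a relation between $B$, $L'(0)$, $c'(0)$ and the product $\theta(x^*)x^*$; feeding in $\bar x\le c'(0)/(\theta(x^*)L'(0))$, the boundary value $V(x^*,x^*)=B$, and $V'(\cdot,x^*)\le c'(0)/x$ (so $V(x,x^*)\ge B-c'(0)\ln(x^*/x)$), one pushes $\theta(x^*)x^*$ below the threshold of \eqref{eq:cond13}, the desired contradiction, which then proves that $v^*(\cdot,x^*)$ is not identically zero.

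I expect the last step to be the main obstacle. The crude bounds $\lambda+r-p(\lambda+\mu)\le\lambda+r$, $p\le1$, $p\ge\theta(x^*)$ only produce a factor $\lambda+r$ at a place where \eqref{eq:cond12} supplies the sharper factor $r-\mu$, so one must evaluate the constraint at a point where $p(\cdot,x^*)$ is close to $1$ while still controlling $\int V'$. Concretely, with $v^*\equiv0$ the representation \eqref{eq:covec-det} becomes $p(x,x^*)=1-(1-\theta(x^*))e^{-(r+\lambda)T_b(x)}$ and the dynamics satisfy $\dot x=H_\xi(x,V'(x,x^*),p(x,x^*))\le\bigl(\tfrac{\lambda+r}{\theta(x^*)}-\lambda-\mu\bigr)x$, hence $p(x,x^*)\ge1-(x/x^*)^{\theta(x^*)}$; balancing this lower bound on $p$, the lower bound $V(x,x^*)\ge B-c'(0)\ln(x^*/x)$, and the upper bound $p(x,x^*)\bigl(rV(x,x^*)+(\lambda+\mu)c'(0)\bigr)\le(\lambda+r)c'(0)$ at a suitably chosen $x$ so that the constants line up with \eqref{eq:cond12}–\eqref{eq:cond13} (and, when needed, invoking $x^{\flat}$ from \eqref{eq:x-flat} and Proposition~\ref{prop:nondev} to locate $\bar x$) is the delicate point.
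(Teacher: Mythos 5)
Your overall plan — argue by contradiction, show that once $v^*\equiv 0$ one also has $u^*=0$ on a tail interval, reduce there to the explicit no-devaluation solution of \eqref{eq:ode---}, and transport $x\,V'(x,x^*)\le c'(0)$ into that solution — is the right shape, and your step 3 (rearranging the first equation of \eqref{eq:Sdode} to get an upper bound on $p$ in terms of $V$) is exactly the inequality the paper uses. However, the final step, which you yourself flag as ``the main obstacle,'' contains a genuine gap and your proposed patch does not close it.

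The problem is the lower bound on $p(x,x^*)$. You try to obtain one from the representation $p(x,x^*)=1-(1-\theta(x^*))e^{-(r+\lambda)T_b(x)}$ together with $\dot x\le(\tfrac{\lambda+r}{\theta(x^*)}-\lambda-\mu)x$, which gives at best $p(x,x^*)\ge 1-(x/x^*)^{\theta(x^*)}$. This is too weak to contradict \eqref{eq:cond13}: if $\theta(x^*)x^*$ stays equal to a fixed constant while $x^*\to\infty$, then $\theta(x^*)\ln(x^*/x)\to 0$, so $(x/x^*)^{\theta(x^*)}\to 1$ and your lower bound on $p$ degenerates to $0$. The paper instead derives a lower bound algebraically from the implicit relation $p(x,x^*)=\tfrac{\theta(x^*)x^*}{x}\bigl(\tfrac{1-p}{1-\theta(x^*)}\bigr)^{(r-\mu)/(r+\lambda)}$: linearizing $(1-p)^{(r-\mu)/(r+\lambda)}\ge 1-\tfrac{r-\mu}{r+\lambda}p$ yields $p(x,x^*)\ge\tfrac{(r+\lambda)\theta(x^*)x^*}{(r+\lambda)x+(r-\mu)\theta(x^*)x^*}$, which stays bounded away from zero precisely in terms of the product $\theta(x^*)x^*$ — exactly the quantity appearing in \eqref{eq:cond13}. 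Without a lower bound of this form, the upper bound on $p$ you obtained in step 3 has nothing to collide with.

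A second, related issue is where you establish $u^*=0$. Your argument ($u^*>0 \Rightarrow V'>p\,L'(0)\ge\theta(x^*)L'(0)$, whence $x<c'(0)/(\theta(x^*)L'(0))$) yields a $\theta$-dependent threshold $\bar x$, which is awkward: evaluating at $\bar x$ one finds $\ln(x^*/\bar x)=\ln(\theta(x^*)x^*L'(0)/c'(0))$, so your proposed lower bound $V(\bar x,x^*)\ge B-c'(0)\ln(x^*/\bar x)$ becomes vacuous exactly in the regime $\theta(x^*)x^*$ large that you need. The paper instead uses the sharper lower bound $H\ge((r-\mu)x-1)\xi/p$ (from Lemma \ref{lemma:Hprop}) together with $V\le B$ to get $V'/p\le rB/((r-\mu)x-1)$, and chooses $M=\tfrac{L'(0)+Br}{L'(0)(r-\mu)}$ so that $(r-\mu)M-1=Br/L'(0)$, which forces $u^*=0$ for all $x\ge M$ with $M$ fixed and independent of $\theta(x^*)$ and $x^*$. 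Evaluating the constraint $c'(0)\ge x\,V'\ge\tfrac{rp(x,x^*)B}{r+\lambda}$ at this fixed $x=M$, feeding in the algebraic lower bound on $p$ above, and using $B\ge 2(r-\mu)c'(0)/r$ from \eqref{eq:cond12} is exactly what produces the constant $\tfrac{2(r+\lambda)c'(0)}{r-\mu}\bigl(\tfrac{1}{rB}+\tfrac{1}{L'(0)}\bigr)$ and closes the argument. You would need both of these ingredients — the $\theta$-independent $M$ and the algebraic lower bound on $p$ — to complete the proof.
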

\begin{proof}
Set $M:=\dfrac{L'(0)+Br}{L'(0)\cdot (r-\mu)}$. Assume by a contradiction that $v^*(x,x^*)=0$ for all $x\in [M,x^*]$. In particular, we have 
\begin{equation}\label{eq:aspL}
0\le x V'(x,x^*)\le c'(0)\qquad x\in [M,x^*].
\end{equation}
The system \eqref{eq:Sdode} in $[M,x^*]$ reduces to 
\begin{equation}\label{eq:redSode}
\begin{cases}
rV(x)=\tilde{H}(x,V'(x),p(x))\cr\cr
(r+\lambda)(p(x)-1)=\tilde H_{\xi}(x,V'(x),p(x))\cdot p'(x)
\end{cases}\end{equation}
with
\[
\tilde{H}(x,\xi,p)~=~\min_{u\in [0,1]}\left\{ L(u) - \dfrac{u}{p}\, \xi \right\}+ \left(\dfrac{\lambda+r}{p} -\lambda-\mu\right) x\, \xi.
\]
Since  $r> \mu$ and $p\in [0,1]$, it holds
\begin{align*}
\tilde{H}(x,\xi,p)~\ge~-\dfrac{\xi}{p}+ \left(\lambda+r-p(\lambda+\mu)\right) x\, \dfrac{\xi}{p}
~\ge~\left((r-\mu)x-1\right)\cdot \dfrac{\xi}{p}
\end{align*}
and  \eqref{eq:redSode} yields  
\[
rB~\ge~rV(x,x^*)~\ge~\left((r-\mu)x-1\right)\cdot\dfrac{V'(x,x^*)}{p(x,x^*)}.
\]
Thus for $x\in \left[M,x^*\right]$ we obtain 
\[
\dfrac{V'(x,x^*)}{p(x,x^*)}~\le~ \dfrac{rB}{(r-\mu)x-1}\le L'(0),
\]
which immediately implies 
\[
u^*(x,x^*)~:=~\underset{u\in [0,1]}{\mathrm{argmin}}\left\{ L(u) - u\cdot\dfrac{V'(x,x^*)}{p(x,x^*)}\right\}~=~0.
\]
Hence, $(V(\cdot,x^*),p(\cdot,x^*))$ solves (\ref{eq:Sdode}) on $[M,x^*]$ and 
\[
V(x,x^*)=B\cdot\left(\dfrac{1-p(x,x^*)}{1-\theta(x^*)}\right)^{\frac{r}{r+\lambda}}\geq B\cdot  \left(1-\dfrac{r}{r+\lambda}\cdot p(x,x^*)\right)
\]
\[
p(x,x^*)=\dfrac{\theta(x^*) x^*}{x}\cdot \left(\dfrac{1-p(x,x^*)}{1-\theta(x^*)}\right)^{\frac{r-\mu}{r+\lambda}}\geq\dfrac{\theta(x^*) x^*}{x}\cdot  \left(1-\dfrac{r-\mu}{r+\lambda}\cdot p(x,x^*)\right)
\]
for all $x\in [M,x^*]$. From the above inequality, one derives 
\[
p(x,x^*)~\geq~\dfrac{(r+\lambda)\theta(x^*)x^*}{(r+\lambda )x+(r-\mu)\theta(x^*)x^*}.
\]
Thus, \eqref{eq:aspL} and the first equation in \eqref{eq:exp-odes} imply 
\begin{align*}
c'(0)\geq& xV'(x,x^*)~=~rp(x,x^*)\cdot \dfrac{V(x,x^*)}{(\lambda+r)-(\lambda+\mu)p(x,x^*)}\\
\geq&\dfrac{rp(x,x^*)B}{r+\lambda } \cdot \dfrac{r+\lambda -r p(x,x^*)}{(\lambda+r)-(\lambda+\mu)p(x,x^*)}\geq\dfrac{rp(x,x^*)B}{r+\lambda}\\
\geq&\dfrac{rB\theta(x^*)x^*}{(r+\lambda )x+(r-\mu)\theta(x^*)x^*}\qquad\forall x\in [M,x^*].
\end{align*}
In particular, choose $x=M$ and recall \eqref{eq:cond12}, we get 
\[
M\geq \dfrac{rB-(r-\mu)c'(0)}{(r+\lambda) c'(0)}\cdot {\theta(x^*)x^*}\geq\dfrac{rB}{2(r+\lambda)c'(0)}\cdot {\theta(x^*)x^*}
\]
and it contradicts to \eqref{eq:cond13}.
%
\end{proof}

\appendix

\section{Some results of convex analysis}\label{app:convex}

We introduce now some concepts of convex analysis, referring the reader to \cite{ET} and \cite{R} for a comprehensive introduction to the subject.

\begin{definition}[Convex conjugate and subdifferential]
We recall that the convex conjugate $F^\circ:\mathbb R^d\to\mathbb R\cup \{\pm\infty\}$ of a map $F:\mathbb R^d\to\mathbb R\cup\{+\infty\}$
is the lower semicontinuous convex function defined by 
\[F^\circ(z^*)=\sup_{z\in\mathbb R^d}\Big\{\langle z^*,z\rangle-F(z)\Big\}.\]
Let $F:\mathbb R^d\to\mathbb R\cup\{+\infty\}$ be proper (i.e., not identically $+\infty$), convex, lower semicontinuous functions, 
$x\in\mathrm{dom}\, F:=\{x\in\mathbb R^d:\,F(x)\in\mathbb R\}$. 
We define the \emph{subdifferential in the sense of convex analysis} of $F$ at $x$ by setting
\[\partial F(x):=\{v_x\in \mathbb R^d:\,F(y)-F(x)\ge \langle v_x,y-x\rangle\textrm{ for all }y\in \mathbb R^d\}.\]
\end{definition}

The following result provide a list of some properties of the subdifferential in the sense of convex analysis.

\begin{lemma}[Properties of the subdifferential]\label{lemma:subdifprop}
Let $F,G:\mathbb R^d\to\mathbb R\cup\{+\infty\}$ be proper (i.e., not identically $+\infty$), convex, lower semicontinuous functions, 
\begin{enumerate}
\item If $F$ is classically (Fr\'echet) differentiable at $x$, then $\partial F(x)=\{F'(x)\}$.
\item $z^*\in \partial F(z)$ if and only if $z\in \partial F^\circ(z^*)$.  
\item $F(x_0)=\displaystyle\min_{x\in \mathbb R^d} F(x)$ if and only if $0\in \partial F(x_0)$
\item $z^*\in \partial F^\circ(z)$ if and only if $F(z)+F^\circ(z^*)=\langle z^*,z\rangle$. In this case $z^*\in\mathrm{dom}\,F^\circ$;
\item $\lambda\ge 0$ we have $\partial(\lambda F)(z)=\lambda \partial F(z)$;
\item if there exists $z\in\mathrm{dom}(F)\cap \mathrm{dom}(G)$ such that $F$ is continuous at $z$ then
$\partial (F + G)(x) = \partial F (x) + \partial G(x)$ for all $x\in\mathrm{dom}(F)\cap \mathrm{dom}(G)$;
\item let $\bar y\in \mathbb R^m$, $\Lambda:\mathbb R^m\to \mathbb R^d$ be a linear map, $G$ be continuous and finite at $\Lambda(\bar y)$; 
Then $\partial(G\circ\Lambda)(y)=\Lambda^T\partial G(\Lambda y)$ for all $y\in \mathbb R^m$, where  $\Lambda^T:\mathbb R^d\to\mathbb R^m$
is the adjoint of $\Lambda$.
\end{enumerate}
\end{lemma}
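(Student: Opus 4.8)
The plan is to derive everything from two classical facts about a proper convex lower semicontinuous $F:\mathbb R^d\to\mathbb R\cup\{+\infty\}$: the Fenchel--Young inequality $F(z)+F^\circ(z^*)\ge\langle z^*,z\rangle$, which holds for all $z,z^*$ and is an equality precisely when $z^*\in\partial F(z)$; and the biconjugation identity $F^{\circ\circ}=F$. Items (1)--(5) then fall out directly from these, while items (6)--(7) are the Moreau--Rockafellar sum rule and its composition analogue, for which one ultimately cites \cite{R} and \cite{ET}.

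Concretely, for (1) I would observe that Fr\'echet differentiability of $F$ at $x$ gives $F'(x)\in\partial F(x)$ by the standard first-order estimate, while any $v\in\partial F(x)$ satisfies $\langle v-F'(x),y-x\rangle\le0$ for every $y$ (test the subgradient inequality at $x+t(y-x)$ and let $t\to0^+$), forcing $v=F'(x)$. Items (2) and (4) are the Fenchel--Young equality characterization of $\partial F$ together with its conjugate-symmetric reformulation via $F^{\circ\circ}=F$; in the equality case $F^\circ(z^*)=\langle z^*,z\rangle-F(z)$ is finite, which gives $z^*\in\mathrm{dom}\,F^\circ$. Item (3) is the tautology that $0\in\partial F(x_0)$ says $F(y)\ge F(x_0)$ for every $y$. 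Item (5) reduces, for $\lambda>0$, to dividing the subgradient inequality of $\lambda F$ by $\lambda$, and is trivial for $\lambda=0$.

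Items (6) and (7) are the only substantive points. The inclusions $\partial F(x)+\partial G(x)\subseteq\partial(F+G)(x)$ and $\Lambda^{T}\partial G(\Lambda y)\subseteq\partial(G\circ\Lambda)(y)$ follow at once by adding, resp.\ pre-composing with $\Lambda$, the defining subgradient inequalities. For the reverse inclusions I would invoke the Moreau--Rockafellar theorem (\cite{R}, Theorem~23.8; \cite{ET}, Chapter~I): given $w\in\partial(F+G)(x)$, the continuity of $F$ at some point of $\mathrm{dom}\,F\cap\mathrm{dom}\,G$ makes the two convex sets in $\mathbb R^d\times\mathbb R$ that encode this membership have nonempty interior, so they admit a non-vertical separating hyperplane whose slope provides the splitting $w=u+v$ with $u\in\partial F(x)$ and $v\in\partial G(x)$; item (7) is then obtained by running the same separation argument for $G\circ\Lambda$, using the continuity of $G$ at $\Lambda\bar y$, or by reducing it to (6). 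The sole genuine obstacle is this separation step, which is precisely where the continuity/qualification hypothesis is needed; the rest is bookkeeping with Fenchel--Young and biconjugation.
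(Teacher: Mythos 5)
The paper gives no proof for this lemma: it is a catalogue of textbook facts from convex analysis, and the appendix preamble explicitly defers to Ekeland--Temam \cite{ET} and Rockafellar \cite{R} for the subject. Your sketch therefore has no in-paper argument to compete with, but it is correct and matches the standard treatments. Items (1)--(5) do follow exactly as you say from the subgradient inequality, the Fenchel--Young inequality and its equality characterization, and the biconjugation identity $F^{\circ\circ}=F$; items (6)--(7) are indeed the Moreau--Rockafellar sum rule and its composition counterpart, with the continuity hypothesis acting as the constraint qualification that makes the separating-hyperplane argument produce the required splitting (Rockafellar, Theorems~23.8 and~23.9). One small point worth flagging: item (4) as printed reads ``$z^* \in \partial F^\circ(z)$'', which does not match the Fenchel--Young equality $F(z)+F^\circ(z^*)=\langle z^*,z\rangle$ that follows it; comparison with item (2) shows the intended condition is ``$z^*\in\partial F(z)$'' (equivalently ``$z\in\partial F^\circ(z^*)$''). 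Your proof correctly treats the intended statement, so the typo does not affect your argument, but it is worth noticing that (4) is the Fenchel--Young characterization and (2) is its self-dual reformulation, rather than two independent facts.
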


\section{Properties of the Hamiltonian function}\label{app:Ham-prop}

In this Appendix we collect some technical results related to the Hamiltonian function in the stochastic and in the deterministic case.

\begin{lemma}\label{lemma:subdiffcost}
Assume \textbf{(A1)}-\textbf{(A2)}. Then, denoted by $L^\circ,c^\circ$ the convex conjugate (see Appendix \ref{app:convex} for the notation) of $L$ and $c$, respectively, 
we have that $L^\circ,c^\circ:\mathbb R\to\mathbb R$ are continuously differentiable and
\begin{align*}
(L^\circ)'(\rho)=\begin{cases}0,&\textrm{ if }\rho<L'(0),\\\\ (L')^{-1}(\rho),&\textrm{ if }\rho\ge L'(0),\end{cases}&&
(c^\circ)'(\rho)=\begin{cases}0,&\textrm{ if }\rho<c'(0),\\\\ (c')^{-1}(\rho),&\textrm{ if }\rho\ge c'(0),\end{cases}
\end{align*}
Moreover,
\begin{align*}L^\circ(\rho)\le\max\{0,\rho\},&&c^\circ(\rho)\le\max\{0,v_{\max}\rho\}\end{align*}
\end{lemma}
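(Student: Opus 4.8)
The plan is to compute $L^\circ$ and $c^\circ$ directly from the definition of the convex conjugate, using the strict convexity of $L$ and $c$ on their effective domains. First I would record the preliminary facts: the extensions of $L$, $c$ to $\mathbb R$ are proper, convex (since $L''>\delta_0>0$ on $]0,1[$ and $c''>\delta_0>0$ on $]0,v_{\max}[$) and lower semicontinuous, so Lemma~\ref{lemma:subdifprop} applies. Moreover, since $L'$ is strictly increasing on $]0,1[$ and $L(u)\to+\infty$ as $u\to1^-$, necessarily $L'(u)\to+\infty$ as $u\to1^-$ (otherwise $L$ would remain bounded), so $L':\,]0,1[\,\to\,]L'(0),+\infty[$ is a continuous increasing bijection with continuous inverse $(L')^{-1}$; the same argument, together with $c'(v)\ge c'(0)+\delta_0 v$, shows $c':\,]0,v_{\max}[\,\to\,]c'(0),+\infty[$ is a continuous increasing bijection with continuous inverse $(c')^{-1}$.

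Next I would evaluate $L^\circ(\rho)=\sup_{u\in[0,1[}\{\rho u-L(u)\}$ by studying the concave function $u\mapsto\rho u-L(u)$, whose derivative on $]0,1[$ is $\rho-L'(u)$. If $\rho\le L'(0)$ this derivative is negative throughout $]0,1[$, so the supremum is attained only at $u=0$, giving $L^\circ(\rho)=-L(0)=0$; if $\rho>L'(0)$ the derivative vanishes exactly at $u^*=(L')^{-1}(\rho)\in\,]0,1[$, which is the unique maximizer, and $L^\circ(\rho)=\rho u^*-L(u^*)$. In particular $L^\circ$ is finite on all of $\mathbb R$, so $L^\circ:\mathbb R\to\mathbb R$. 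Since in every case the maximizer $u^*(\rho)$ is unique, by Lemma~\ref{lemma:subdifprop}(2) the subdifferential $\partial L^\circ(\rho)$ equals the singleton $\{u^*(\rho)\}$, whence $L^\circ$ is differentiable with $(L^\circ)'(\rho)=u^*(\rho)$; this is exactly the claimed formula, with $(L')^{-1}$ extended by continuity by $0$ at $L'(0)$. (Alternatively one may differentiate the explicit expression and check that the bracketed term $\rho-L'((L')^{-1}(\rho))$ vanishes.) The map $\rho\mapsto u^*(\rho)$ is continuous, being equal to $0$ on $]-\infty,L'(0)]$ and to the continuous function $(L')^{-1}$ on $[L'(0),+\infty[$, with the two branches agreeing at $\rho=L'(0)$; hence $(L^\circ)'$ is continuous, i.e.\ $L^\circ\in C^1(\mathbb R)$. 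The argument for $c^\circ$ is identical, with $(L,L',1)$ replaced by $(c,c',v_{\max})$.

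For the upper bounds I would argue directly. Since $L(0)=0$ and $L'>0$, we have $L\ge 0$ on $[0,1[$; thus for $\rho\le 0$ each term $\rho u-L(u)$ is $\le 0$ with equality at $u=0$, so $L^\circ(\rho)=0$, while for $\rho>0$ we have $\rho u-L(u)\le\rho u<\rho$ for every $u\in[0,1[$, so $L^\circ(\rho)\le\rho$. Together these give $L^\circ(\rho)\le\max\{0,\rho\}$. The estimate $c^\circ(\rho)\le\max\{0,v_{\max}\rho\}$ follows in the same way from $v<v_{\max}$ and $c\ge 0$ (it is vacuous for $\rho>0$ when $v_{\max}=+\infty$).

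I do not expect any genuinely hard step here; the only points requiring a line of care are that $L'$ and $c'$ are actually surjective onto $]L'(0),+\infty[$ and $]c'(0),+\infty[$ — which is what guarantees that $u^*(\rho)$ is well defined for all $\rho$ and that $L^\circ,c^\circ$ are finite on all of $\mathbb R$ — and the gluing of the two branches of $u^*(\cdot)$ at the breakpoint, which is what upgrades $C^0$ to $C^1$ regularity of the conjugates.
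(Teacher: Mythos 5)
Your proof is correct and follows essentially the same route as the paper: identify the unique solution of the Fenchel equality $L^\circ(\rho)+L(u)=\rho u$ (equivalently, the unique maximizer of $u\mapsto\rho u-L(u)$), deduce $\partial L^\circ(\rho)$ is a singleton, and conclude $C^1$ regularity, then bound $L^\circ$ using $L\ge 0$ on $[0,1[$ (the paper phrases this as $L\ge I_{[0,1]}$ and compares conjugates). The only difference is presentational — the paper compresses the differentiability step into citations of Rockafellar's Theorems 23.5, 25.1, and 26.3, whereas you argue it directly via Lemma~\ref{lemma:subdifprop}(2) and the continuity/surjectivity of $L'$, $c'$ (including the $v_{\max}=+\infty$ case via $c''>\delta_0$), which makes the proof more self-contained but does not change the strategy.
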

\begin{proof}
Recalling the assumptions $\textbf{(A1)}-\textbf{(A2)}$ on $L,c$, the equations
\begin{align*}
L^\circ(\rho_1)+L(u)=u\rho_1,&&c^\circ(\rho_2)+c(v)=v\rho_2,
\end{align*}
admits as unique solutions 
\begin{align*}
u(\rho_1)=\begin{cases}0,&\textrm{ if }\rho_1<L'(0),\\ \\(L')^{-1}(\rho_1),&\textrm{ if }\rho_1\ge L'(0)>0,\end{cases}&&
v(\rho_2)=\begin{cases}0,&\textrm{ if }\rho_2<c'(0),\\ \\(c')^{-1}(\rho_2),&\textrm{ if }\rho_2\ge c'(0)\ge 0.\end{cases}
\end{align*}
The result now follows from Theorem 23.5, Theorem 25.1, and Theorem 26.3 in \cite{R}.
For the second part, set $I_C(s)=0$ if $s\in C$ and $0$ otherwise, 
since $L(u)\ge I_{[0,1]}(u)$ and $c(v)\ge I_{[0,v_{\max}]}(v)$, we have
\begin{align*}
L^\circ(\rho)\le I^\circ_{[0,1]}(\rho)=&\max_{u\in [0,1]}\langle u,\rho\rangle=\max\{0,\rho\},\\
c^\circ(\rho)\le I^\circ_{[0,v_{\max}]}(\rho)=&\max_{v\in [0,v_{\max}]}\langle v,\rho\rangle=\max\{0,v_{\max}\cdot\rho\}.
\end{align*}
\end{proof}

\par\medskip\par

Lemma \ref{lemma:subdiffcost} immediately implies

\begin{lemma}\label{lemma:Hamgrad}
Assume \textbf{(A1)}-\textbf{(A2)}, and let $H$ be defined as in \eqref{eq:Hamiltonian}. Then $H$ is continuous differentiable and
its gradient at points $(x,\xi,p)\in [0,+\infty[\times[0,+\infty[\times]0,1]$ can be expressed in terms of 
$u^*(\xi,p):=(L^\circ)'(\xi/p)$ and $v^*(x,\xi):=(c^\circ)'(x\xi)$ by 
\begin{equation}\begin{cases}
\displaystyle H_{x}(x,\xi,p)=& \displaystyle \Big[(\lambda+r)-p(\lambda +\mu +v^*(x,\xi)-\sigma^2)\Big]\cdot \frac{\xi}{p},\\
\displaystyle H_{\xi}(x,\xi,p)=& \displaystyle \frac{1}{p}\cdot \Big[x\big((\lambda+r)-p(\lambda +\mu +v^*(x,\xi)-\sigma^2)\big)-u^*(\xi,p)\Big],\\
\displaystyle H_{p}(x,\xi,p)=& \displaystyle (u^*(\xi,p)-x(\lambda +r))\cdot \frac{\xi}{p^2},
\end{cases}
\end{equation}
Moreover, 
\[\begin{cases}
u^*(\xi,p)=&\underset{u\in [0,1]}{\mathrm{argmin}}\left\{ L(u) - u\,\dfrac{\xi}{p}\right\},\\ \\
v^*(x,\xi)=&\underset{v\ge 0}{\mathrm{argmin}}\left\{ c(v) - vx\xi\right\}.
\end{cases}\]
and for all $x>0$, $0<p\le 1$ we have also
\begin{align}
\label{eq:optuv}&\nabla u^*(\xi,p)=\dfrac{(1,-L'(u^*(x,\xi,p)))}{pL''(u_*(x,\xi,p))},\textrm{ if }\xi> pL'(0),\\
\nonumber &\nabla v^*(x,\xi)=\dfrac{(\xi,x)}{c''(v^*(x,\xi))},\textrm{ if }x\xi> c'(0),\\
\nonumber &\lim_{\xi\to +\infty}v^*(x,\xi)=v_{\max},
\end{align}
\end{lemma}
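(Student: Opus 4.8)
The plan is to derive the gradient formulas by a direct application of Lemma \ref{lemma:subdiffcost} together with the envelope/Danskin theorem for the minimization appearing in the definition of $H$. Recall from \eqref{eq:Hamiltonian} that
\[
H(x,\xi,p)=-L^\circ\!\left(\frac{\xi}{p}\right)-c^\circ(x\xi)+\left(\frac{\lambda+r}{p}-\lambda-\mu+\sigma^2\right)x\xi.
\]
By Lemma \ref{lemma:subdiffcost}, the functions $L^\circ$ and $c^\circ$ are $C^1$ on $\mathbb R$, with $(L^\circ)'(\rho)=u^*(\rho)$ of the stated piecewise form and $(c^\circ)'(\rho)=v^*(\rho)$ likewise; this already guarantees that $H$ is continuously differentiable on $[0,+\infty[\times[0,+\infty[\times\,]0,1]$, since it is a composition and product of $C^1$ maps (and $p\mapsto 1/p$, $1/p^2$ are smooth for $p>0$). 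So the first claim is immediate.

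Next I would compute the three partial derivatives by the chain rule, substituting the shorthand $u^*(\xi,p)=(L^\circ)'(\xi/p)$ and $v^*(x,\xi)=(c^\circ)'(x\xi)$. Differentiating in $x$: the term $-c^\circ(x\xi)$ contributes $-\xi\,v^*(x,\xi)$, and the affine-in-$x$ last term contributes $\left(\frac{\lambda+r}{p}-\lambda-\mu+\sigma^2\right)\xi$; collecting and multiplying through by $p/p$ gives $H_x=\big[(\lambda+r)-p(\lambda+\mu+v^*(x,\xi)-\sigma^2)\big]\cdot\frac{\xi}{p}$. Differentiating in $\xi$: $-L^\circ(\xi/p)$ gives $-\frac1p u^*(\xi,p)$, $-c^\circ(x\xi)$ gives $-x\,v^*(x,\xi)$, and the last term gives $\left(\frac{\lambda+r}{p}-\lambda-\mu+\sigma^2\right)x$; regrouping yields $H_\xi=\frac1p\big[x((\lambda+r)-p(\lambda+\mu+v^*(x,\xi)-\sigma^2))-u^*(\xi,p)\big]$. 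Differentiating in $p$: $-L^\circ(\xi/p)$ gives $+\frac{\xi}{p^2}u^*(\xi,p)$ (since $\partial_p(\xi/p)=-\xi/p^2$), and the last term gives $-\frac{\lambda+r}{p^2}x\xi$; summing gives $H_p=(u^*(\xi,p)-x(\lambda+r))\cdot\frac{\xi}{p^2}$. The identification of $u^*$ and $v^*$ as the arg-min in $\min_{u\in[0,1]}\{L(u)-u\xi/p\}$ and $\min_{v\ge0}\{c(v)-vx\xi\}$ is exactly item (4) of Lemma \ref{lemma:subdifprop} applied to $L,c$: $u^*\in\partial L^\circ(\xi/p)$ is equivalent to $\xi/p\in\partial L(u^*)$, i.e.\ $u^*$ realizes the Legendre minimum, and similarly for $v^*$.

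Finally, for the derivative formulas \eqref{eq:optuv}, on the open region $\xi>pL'(0)$ we have $u^*(\xi,p)=(L')^{-1}(\xi/p)$, which is $C^1$ there by the inverse function theorem since $L''>\delta_0>0$; differentiating the identity $L'(u^*(\xi,p))=\xi/p$ in $\xi$ and in $p$ gives $u^*_\xi=\frac{1}{pL''(u^*)}$ and $u^*_p=-\frac{\xi}{p^2L''(u^*)}=-\frac{L'(u^*)}{pL''(u^*)}$, which is the stated $\nabla u^*$. The computation for $\nabla v^*$ on $\{x\xi>c'(0)\}$ is identical, differentiating $c'(v^*(x,\xi))=x\xi$. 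The last limit $\lim_{\xi\to+\infty}v^*(x,\xi)=v_{\max}$ follows from \textbf{(A2)}: since $c'$ is strictly increasing on $]0,v_{\max}[$ with $c(v)\to+\infty$ as $v\to v_{\max}^-$, we have $c'(v)\to+\infty$, so $(c')^{-1}(x\xi)\to v_{\max}$ as $x\xi\to+\infty$.

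There is essentially no obstacle here: the only point requiring a little care is making sure the $C^1$ regularity of $L^\circ,c^\circ$ (which is where \textbf{(A1)}--\textbf{(A2)}, in particular the strict convexity $L'',c''\ge\delta_0$ and the blow-up at the endpoints, genuinely enter) has already been secured in Lemma \ref{lemma:subdiffcost}, so that the chain rule is legitimate across the kink at $\xi=pL'(0)$ (resp.\ $x\xi=c'(0)$); at that kink $u^*$ (resp.\ $v^*$) is continuous with value $0$, so the formulas for $H_x,H_\xi,H_p$ remain valid and continuous there, even though $\nabla u^*,\nabla v^*$ are only asserted on the open devaluating/austerity region.
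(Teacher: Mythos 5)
Your proposal is correct and takes essentially the same route as the paper: the paper does not spell out a proof of Lemma \ref{lemma:Hamgrad} but simply states that it follows at once from Lemma \ref{lemma:subdiffcost}, and your argument is precisely the fleshed-out version of that --- invoke the $C^1$ regularity of $L^\circ, c^\circ$ from Lemma \ref{lemma:subdiffcost}, apply the chain rule to the closed form \eqref{eq:Hamiltonian}, identify the argmins via the Fenchel duality items of Lemma \ref{lemma:subdifprop}, and differentiate the first-order identities $L'(u^*)=\xi/p$, $c'(v^*)=x\xi$ by the implicit/inverse function theorem on the open regions (which is licit since $L'',c''\ge\delta_0>0$). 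The one small looseness is the claim that $c(v)\to+\infty$ as $v\to v_{\max}^-$ alone forces $c'(v)\to+\infty$: this is immediate when $v_{\max}<\infty$ (a convex $C^1$ function with bounded derivative on a bounded interval stays bounded), but when $v_{\max}=+\infty$ one should instead appeal to $c''\ge\delta_0>0$, which gives $c'(v)\ge c'(1)+\delta_0(v-1)\to+\infty$; either way the conclusion $\lim_{\xi\to+\infty}v^*(x,\xi)=v_{\max}$ holds, so this does not affect the correctness of your proof.
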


\begin{lemma}\label{lemma:Hprop}
Let the assumptions \textbf{(A1)}-{\bf(A2)} hold. 
Then 
\begin{enumerate}
\item for all $\xi\geq 0$ and $p\in ]0,1]$,  the function $H$ in \eqref{eq:Hamiltonian} satisfies
\begin{align*}
H(x,\xi,p)\le&\left(\dfrac{\lambda+r}{p} -(\lambda+\mu) + \sigma^2 \right) x\xi;\\
H(x,\xi,p)\ge&\left(\frac{(\lambda+r) x-1}{p}+(\sigma^2-\lambda-\mu-v^*(x,\xi))x\right)\cdot \xi\\
\ge&\left(\frac{(\lambda+r) x-1}{p}+(\sigma^2-(\lambda+\mu)-v_{\max})x\right)\cdot \xi;\\
H_\xi(x,\xi,p)\le&\left(\dfrac{\lambda+r}{p} -(\lambda+\mu) + \sigma^2\right) x;\\
H_\xi(x,\xi,p)\ge&\frac{(\lambda+r) x-1}{p}+(\sigma^2-(\lambda+\mu)-v^*(x,\xi))x\\
\ge&\frac{(\lambda+r)x-1}{p}+(\sigma^2-(\lambda+\mu)-v_{\max})x
\end{align*}
\item for every $x,p>0$ 
the map $\xi\mapsto H(x,\xi,p)$ is concave down and satisfies
\begin{align*}
H(x,0,p)=0,&&H_\xi(x,0,p)=\left(\dfrac{\lambda+r}{p} -(\lambda+\mu)+\sigma^2\right) x.
\end{align*}
\end{enumerate}
\end{lemma}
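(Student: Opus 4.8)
The plan is to read everything off the explicit expression \eqref{eq:Hamiltonian} for $H$, using only the elementary bounds on the conjugates $L^\circ,c^\circ$ recorded in Lemma \ref{lemma:subdiffcost} and the gradient formula of Lemma \ref{lemma:Hamgrad}; no genuine analysis is needed. I would begin with part (2). From $L,c\ge 0$ with $L(0)=c(0)=0$ one gets $L^\circ(0)=\sup_u(-L(u))=-L(0)=0$ and, similarly, $c^\circ(0)=0$, hence $H(x,0,p)=0$. For concavity, note that $\xi\mapsto L^\circ(\xi/p)$ and $\xi\mapsto c^\circ(x\xi)$ are convex, being the convex functions $L^\circ,c^\circ$ precomposed with linear maps, so $\xi\mapsto -L^\circ(\xi/p)-c^\circ(x\xi)$ is concave; adding the affine term $\bigl(\tfrac{\lambda+r}{p}-\lambda-\mu+\sigma^2\bigr)x\,\xi$ preserves concavity, whence $\xi\mapsto H(x,\xi,p)$ is concave down. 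Finally, since Lemma \ref{lemma:subdiffcost} gives $(L^\circ)'(0)=(c^\circ)'(0)=0$ (because $L'(0),c'(0)>0$), differentiating \eqref{eq:Hamiltonian} in $\xi$ and setting $\xi=0$ yields $H_\xi(x,0,p)=\bigl(\tfrac{\lambda+r}{p}-(\lambda+\mu)+\sigma^2\bigr)x$; the differentiation is legitimate since, by Lemma \ref{lemma:subdiffcost}, $L^\circ,c^\circ$ are finite and $C^1$ on all of $\mathbb R$.

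For the bounds on $H$ in part (1), the upper estimate is immediate: $L^\circ\ge 0$ and $c^\circ\ge 0$ (by the computation above, taking $u=0$, resp. $v=0$, in the defining supremum), so dropping those two nonnegative terms in \eqref{eq:Hamiltonian} gives $H(x,\xi,p)\le\bigl(\tfrac{\lambda+r}{p}-(\lambda+\mu)+\sigma^2\bigr)x\xi$. For the lower estimate I would use the Fenchel equality (Lemma \ref{lemma:subdifprop}): with $u^*=u^*(\xi,p)\in[0,1]$ and $v^*=v^*(x,\xi)\in[0,v_{\max}]$ the minimizers of Lemma \ref{lemma:Hamgrad}, one has $L^\circ(\xi/p)=u^*\,\xi/p-L(u^*)\le \xi/p$ and $c^\circ(x\xi)=v^*\,x\xi-c(v^*)\le v^*\,x\xi$, using $\xi\ge 0$, $x\ge 0$, $L,c\ge 0$; substituting into \eqref{eq:Hamiltonian} gives
\[
H(x,\xi,p)\ \ge\ \Bigl(\tfrac{(\lambda+r)x-1}{p}+\bigl(\sigma^2-\lambda-\mu-v^*(x,\xi)\bigr)x\Bigr)\xi,
\]
which, since $v^*(x,\xi)\le v_{\max}$, is in turn $\ge\bigl(\tfrac{(\lambda+r)x-1}{p}+(\sigma^2-(\lambda+\mu)-v_{\max})x\bigr)\xi$.

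The two estimates on $H_\xi$ follow in exactly the same way from the explicit formula $H_\xi(x,\xi,p)=\tfrac1p\bigl[x((\lambda+r)-p(\lambda+\mu+v^*(x,\xi)-\sigma^2))-u^*(\xi,p)\bigr]$ of Lemma \ref{lemma:Hamgrad}: bounding $u^*(\xi,p)\ge 0$ and $v^*(x,\xi)\ge 0$ yields $H_\xi\le\bigl(\tfrac{\lambda+r}{p}-(\lambda+\mu)+\sigma^2\bigr)x$, while bounding $u^*(\xi,p)\le 1$ and $0\le v^*(x,\xi)\le v_{\max}$ yields the displayed chain of lower bounds. There is no substantive obstacle: the whole lemma is a matter of keeping straight which of the trivial bounds $0\le u^*\le 1$, $0\le v^*\le v_{\max}$, $L^\circ\ge 0$, $c^\circ\ge 0$ is invoked in each line, the only mild subtlety being the $C^1$ regularity of $L^\circ,c^\circ$ needed to differentiate $H$, which is provided by Lemma \ref{lemma:subdiffcost}.
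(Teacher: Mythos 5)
Your proof is correct and follows essentially the same strategy as the paper's: read every estimate off the explicit formula \eqref{eq:Hamiltonian} for $H$ together with the conjugate and gradient facts in Lemmas \ref{lemma:subdiffcost} and \ref{lemma:Hamgrad}. Where you differ is only at the micro level, and if anything your version is a touch cleaner: for the $H_\xi$ bounds you read them off directly from the gradient formula in Lemma \ref{lemma:Hamgrad} using $0\le u^*(\xi,p)\le 1$ and $0\le v^*(x,\xi)\le v_{\max}$, whereas the paper routes through the monotonicity of $\xi\mapsto H_\xi(x,\xi,p)$ coming from concavity, evaluating at $\xi=0$ for the upper bound and taking $\xi\to+\infty$ for the lower one; and for the lower bound on $H$ your use of the Fenchel equality $c^\circ(x\xi)=v^*(x,\xi)\,x\xi-c(v^*(x,\xi))\le v^*(x,\xi)\,x\xi$ (and similarly for $L^\circ$) gives the sharper intermediate inequality with $v^*(x,\xi)$ directly, while the paper cites only the cruder estimates $L^\circ(\rho)\le\max\{0,\rho\}$, $c^\circ(\rho)\le\max\{0,v_{\max}\rho\}$, which by themselves yield only the $v_{\max}$-version of that inequality. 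Both routes are valid; yours makes the intermediate bound explicit, which the paper leaves to the reader.
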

\begin{proof}
The concavity of $\xi\mapsto H(x,\xi,p)$ for every $x,p>0$ is immediate from the definition of $H$ in \eqref{eq:Hamiltonian}. 
The equalities in item $(2)$ are immediate from Lemma \ref{lemma:subdiffcost}.
The upper bound on $H(x,\xi,p)$ follows from the positivity of $L^\circ$ and $c^\circ$.
By concavity, the map $\xi\mapsto H_\xi(x,\xi,p)$ is monotone decreasing, thus $H_\xi(x,\xi,p)\le H_\xi(x,0,p)$, which proves the upper bound
on $H_\xi(x,\xi,p)$ together with item (2). The lower estimate for $H(x,\xi,p)$ comes from the second part of Lemma \ref{lemma:subdiffcost},
in particular from the upper estimate on $L^\circ(\cdot)$.
The lower estimate for $H_\xi(x,\xi,p)$ comes from Lemma \ref{lemma:Hamgrad}, noticing that
\[\lim_{\xi\to +\infty}u^*(\xi,p)=\lim_{\rho\to +\infty}(L')^{-1}(\rho)=1,\lim_{\xi\to +\infty}v^*(x,\xi)=\lim_{\rho\to +\infty}(c')^{-1}(\rho)=v_{\max},\]
and using the decreasing property of $\xi\mapsto H_\xi(x,\xi,p)$, i.e., the fact that
\[\lim_{\zeta\to +\infty}H_\xi(x,\zeta,p)\le H_\xi(x,\xi,p),\]
for all $x\ge 0$, $p\in ]0,1]$, $\xi\in\mathbb R$.
\end{proof}

\begin{lemma}\label{lemma:holder-gen}
Assume that $f:I\to\mathbb R$ is a $C^2$ convex strictly increasing function defined on a real interval $I$, and satisfying $f''\ge \delta >0$.
Then, denoted by $g$ its inverse function, $g:f(I)\to I$, we have that $g$ is $1/2$-H\"older continuous.
\end{lemma}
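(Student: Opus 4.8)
The plan is to control $g'$ near any point and then integrate, exploiting that the convexity bound $f''\ge\delta$ forces $f$ to grow at least quadratically away from its minimum-slope point, hence $g=f^{-1}$ cannot vary too quickly. Fix $y_1<y_2$ in $f(I)$ and set $x_i=g(y_i)\in I$, so $x_1<x_2$ since $f$ (and hence $g$) is strictly increasing. The first step is the key quantitative estimate: by Taylor's theorem with the integral remainder, for any $x\in I$,
\[
f(x)=f(x_1)+f'(x_1)(x-x_1)+\int_{x_1}^{x}(x-s)f''(s)\,ds\ \ge\ f(x_1)+f'(x_1)(x-x_1)+\frac{\delta}{2}(x-x_1)^2,
\]
using $f''\ge\delta>0$. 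Evaluating at $x=x_2$ and recalling $f(x_i)=y_i$ gives
\[
y_2-y_1\ \ge\ f'(x_1)(x_2-x_1)+\frac{\delta}{2}(x_2-x_1)^2\ \ge\ \frac{\delta}{2}(x_2-x_1)^2,
\]
where in the last inequality I used $f'(x_1)>0$ (strict monotonicity) together with $x_2-x_1>0$ to drop the linear term. Rearranging yields
\[
|g(y_2)-g(y_1)|=x_2-x_1\ \le\ \sqrt{\tfrac{2}{\delta}}\,\sqrt{y_2-y_1}=\sqrt{\tfrac{2}{\delta}}\,|y_2-y_1|^{1/2},
\]
which is exactly the claimed $1/2$-Hölder estimate with constant $\sqrt{2/\delta}$.

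There is essentially no hard part here: the only subtlety is making sure the Taylor expansion is applied at the left endpoint $x_1$ so that the discarded linear term $f'(x_1)(x_2-x_1)$ has the favorable sign, and noting that $g$ is at least continuous and strictly increasing (indeed differentiable away from where $f'$ might vanish, but we do not even need that) so that $x_1<x_2$ whenever $y_1<y_2$. One should also remark that the estimate is symmetric in $y_1,y_2$ after taking absolute values, so the ordering assumption is harmless. This completes the proof.

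For the record, this lemma is precisely what is invoked in the proof of Lemma \ref{lemma:holder} with $f(\cdot)=-\tfrac1r H(x,\cdot,p)$ on the range where $x\xi>\min\{xpL'(0),c'(0)\}$, where the bound $-H_{\xi\xi}\ge \min\{1,x_1^2p_1\}/\delta_0$ supplies the required uniform lower bound on $f''$; hence the constant there takes the form $\sqrt{2r\delta_0/\min\{1,x_1^2p_1\}}$.
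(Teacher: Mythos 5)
Your proof is correct and follows essentially the same approach as the paper: Taylor expansion at the left endpoint with the integral-form remainder, bounding the remainder from below via $f''\ge\delta$, and dropping the nonnegative linear term $f'(x_1)(x_2-x_1)$ to get the quadratic lower bound. The only difference is cosmetic (your $x_i,y_i$ are swapped relative to the paper's notation, which in fact uses $g(x_i)=y_i$ despite having just declared $g:f(I)\to I$, so your naming is arguably cleaner).
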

\begin{proof}
Indeed, let $x_1,x_2\in f^{-1}(I)$ with $x_1\le x_2$, and set $y_1=g(x_1)$ and $y_2=g(x_2)$.
\begin{align*}
f(y_2)-f(y_1)~&=~\int_{y_1}^{y_2} f'(t)\,dt=\int_{y_1}^{y_2} [f'(t)-f'(y_1)]\,dt\\ &=~f'(y_1) \cdot (y_2-y_1)+\int_{y_1}^{y_2}\int_{y_1}^t f''(s)\,ds\,dt\\ 
&\ge~f'(y_1) \cdot (y_2-y_1)+\dfrac{\delta }{2} (y_2-y_1)^2\ge \dfrac{\delta }{2} (y_2-y_1)^2,
\end{align*}
since $f$ is strictly increasing, $f''(s)\ge \delta $, and $y_1\le y_2$.
Thus if $x_2\ge x_1$ we have
\[|g(x_2)-g(x_1)|\le \sqrt{\dfrac{2}{\delta }}|x_2-x_1|^{1/2}.\]
By switching the roles of $x_2$ and $x_1$, the same holds true if $x_1\ge x_2$.
\end{proof}

\section{List of symbols}\label{app:list}

\begin{tabular}{cl}
$X=X(t)$&the total nominal value of the outstanding debt;\\
$Y=Y(t)$&the gross national product GDP measured in terms\\ &of the floating currency unit;\\
$U=U(t)$&the rate of payments that the borrower chooses\\ &to make to the lenders;\\
$\mu$   &average growth rate of the economy;\\
$\sigma$&the volatility;\\
$\lambda$&rate at which the borrower pays back the principal;\\
$r$      &discount rate;\\
$x=x(t)=X(t)/Y(t)$&debt-to-GDP ratio (DTI);\\
$p=p(t)$&discounted bond price;\\
$\tilde u=\tilde u(t)=U(t)/Y(t)$&fraction of the total income allocated to reduce the debt;\\
$\tilde v=\tilde v(t)$&devaluation rate;\\
$x^*$   &maximum DTI threshold;\\
$\theta$&recovery fraction after bankruptcy;\\
$B$     &bankruptcy cost;\\
$L=L(u)$&adversion toward austerity policy;\\
$c=c(v)$&social cost due to currency devaluation.
\end{tabular}

\end{document}